\documentclass{amsart}
\usepackage[utf8]{inputenc}
\usepackage{tikz-cd}
\usepackage{amsmath}
\usepackage{verbatim}
\usepackage{amssymb}
\usepackage{amsthm}
\usepackage{hyperref}
\usepackage{multirow}
\usepackage[bb=dsserif]{mathalpha}

\newtheorem*{theorem*}{Theorem}

\usepackage{ulem}

\usetikzlibrary{calc}
\usetikzlibrary{decorations.pathmorphing}
\tikzset{curve/.style={settings={#1},to path={(\tikztostart)
    .. controls ($(\tikztostart)!\pv{pos}!(\tikztotarget)!\pv{height}!270:(\tikztotarget)$)
    and ($(\tikztostart)!1-\pv{pos}!(\tikztotarget)!\pv{height}!270:(\tikztotarget)$)
    .. (\tikztotarget)\tikztonodes}},
    settings/.code={\tikzset{quiver/.cd,#1}
        \def\pv##1{\pgfkeysvalueof{/tikz/quiver/##1}}},
    quiver/.cd,pos/.initial=0.35,height/.initial=0}

\tikzset{tail reversed/.code={\pgfsetarrowsstart{tikzcd to}}}
\tikzset{2tail/.code={\pgfsetarrowsstart{Implies[reversed]}}}
\tikzset{2tail reversed/.code={\pgfsetarrowsstart{Implies}}}
\tikzset{no body/.style={/tikz/dash pattern=on 0 off 1mm}}

\DeclareMathOperator{\im}{im}

\DeclareMathOperator{\field}{k}
\DeclareMathOperator{\op}{op}
\DeclareMathOperator{\rad}{rad}

\DeclareMathOperator{\modu}{mod}
\DeclareMathOperator{\Ext}{Ext}

\DeclareMathOperator{\End}{End}
\DeclareMathOperator{\Hom}{Hom}

\DeclareMathOperator{\Sim}{Sim}

\DeclareMathOperator{\id}{id}

\DeclareMathOperator{\Res}{Res}

\DeclareMathOperator{\Inn}{Inn}
\DeclareMathOperator{\ess}{ess}

\DeclareMathOperator{\filt}{F}
\newtheorem{theorem}{Theorem}[section]
\newtheorem{definition}[theorem]{Definition}
\newtheorem{example}[theorem]{Example}
\newtheorem{corollary}[theorem]{Corollary}
\newtheorem{lemma}[theorem]{Lemma}
\newtheorem{remark}[theorem]{Remark}
\newtheorem{proposition}[theorem]{Proposition}
\title{Exact Borel Subalgebras of Quasi-Hereditary Monomial Algebras}
\author{Anna Rodriguez Rasmussen}

\DeclareMathOperator{\qh}{\mathbb{qh}}

\begin{document}
\bibliographystyle{plain}
\begin{abstract}
    Green and Schroll give an easy criterion for a monomial algebra $A$ to be quasi-hereditary with respect to some partial order $\leq_A$. A natural follow-up question is under which conditions a monomial quasi-hereditary algebra $(A, \leq_A)$ admits an exact Borel subalgebra in the sense of König. In this article, we show that it always admits a Reedy decomposition consisting of an exact Borel subalgebra $B$, which has a basis given by paths, and a dual subalgebra. Moreover, we give an explicit description of $B$ and show that it is the unique exact Borel subalgebra of $A$ with a basis given by paths. Additionally, we give a criterion for when $B$ is regular, using a criterion by Conde.
\end{abstract}
\maketitle
\tableofcontents
\section{Introduction}
In \cite{CPS}, Cline, Parshall and Scott defined the notion of a quasi-hereditary algebra, which is a pair $(A, \leq_A)$ consisting of a finite-dimensional algebra $A$ and a partial order $\leq_A$ on the set $\Sim(A)$ of isomorphism classes of simple $A$-modules, fulfilling certain properties. Quasi-hereditary algebras are a generalisation of hereditary algebras, that is, algebras such that any submodule of a projective module is projective, in that for a hereditary algebra $A$, $(A, \leq_A)$ is quasi-hereditary for any total order $\leq_A$ on $\Sim(A)$. However, quasi-hereditary algebras encompass many more algebras, such as, for example Schur algebras, algebras of global dimension two and algebras underlying blocks of BGG category $\mathcal{O}$.\\ 
Given a quasi-hereditary algebra $(A, \leq_A)$ with simple modules $L_1^A, \dots, L_n^A$, one often considers the corresponding standard modules $\Delta^A_1, \dots, \Delta_n^A$ and the category $\filt(\Delta^A)$ of standardly filtered modules. In the case of blocks of category $\mathcal{O}$, the standard modules are Verma modules and $\filt(\Delta^A)$ is the category of modules admitting a Verma flag.
By \cite[Theorem 2]{DlabRingel},  the category $\filt(\Delta^A)$ determines the quasi-hereditary algebra $(A, \leq_A)$ up to Morita equivalence.\\
Inspired by the notion of a Borel subalgebra of a Lie algebra, König defined in \cite{Koenig} the notion of an exact Borel subalgebra of a quasi-hereditary algebra. Such a subalgebra, if it exists, carries a lot of information about the category $\filt(\Delta^A)$ of standardly filtered $A$-modules, and can thus be used to investigate the quasi-hereditary structure of $(A, \leq_A)$.
Unfortunately, a quasi-hereditary algebra $(A, \leq_A)$ may fail to admit an exact Borel subalgebra \cite[Example 2.3]{Koenig}. Nevertheless, a landmark result by König, Külshammer and Ovsienko \cite{KKO} shows that any quasi-hereditary algebra $(A, \leq_A)$ is Morita equivalent to a quasi-hereditary algebra $(R, \leq_R)$ which does admit an exact Borel subalgebra $B\subseteq R$ which is moreover a so-called regular exact Borel subalgebra. The proof is constructive; however, the construction involves calculating the A-infinity structure of the Ext-algebra of the standard modules, which is in general very difficult.\\
Dually to exact Borel subalgebras, one may study $\Delta$-subalgebras, which correspond to exact Borel subalgebras in the opposite algebra. Certain quasi-hereditary algebras $A$ admit both an exact Borel subalgebra $B$ and a $\Delta$-subalgebra $C$, as well as a decomposition $A\cong C\otimes_L B$, where $L$ is a maximal semisimple subalgebra of $A$. Such algebras were defined in \cite{Koenig2} and recently linked to Reedy categories in \cite{DaleziosStovicek} and \cite{KoenigDaleziosConde}. \\
In this article, we consider the special case where $A$ is a basic monomial algebra. Monomial algebras are a class of algebras more general than path algebras, which nevertheless are much easier to work with than general finite-dimensional algebras. Notably, it was shown in \cite{finitistic} that monomial algebras fulfill the finitistic dimension conjecture.  In a monomial algebra $A=\field Q/I$ it is still possible to work explicitly with paths in the corresponding quiver $Q$. Hence, one may use combinatorial tools to study possible exact Borel subalgebras of $(A, \leq_A)$. Some previous results in this direction for dual extension algebras, Ringel duals of dual extension algebras, and quivers of Dynkin type $\textup{A}$ can be found in \cite{Xi, Markus2, Markus}, although the approach we take here is more similar to \cite{schroll}, where exact Borel subalgebras were not studied.\\
The main result of the article is the following:
\begin{theorem*}\ref{thm_borel_hereditary}+\ref{thm_reedy}+\ref{proposition_uniqueness}
    Let $(A, \leq)$ be a monomial quasi-hereditary algebra.\\ Then $(A, \leq)$ admits a Reedy decomposition $A=C\otimes_L B$, where $B$ is an exact Borel subalgebra with a basis given by paths, $L$ is the span of the trivial paths, and $C$ is a $\Delta$-subalgebra with a basis given by paths. Moreover, $B$ is the unique exact Borel subalgebra of $A$ which has a basis given by paths, and $C$ is the unique $\Delta$-subalgebra of $A$ with a basis given by paths.
\end{theorem*}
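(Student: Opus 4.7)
The plan is to work explicitly at the level of the admissible path basis of $A = \field Q/I$ and exploit Green--Schroll's combinatorial criterion. Writing $\leq$ for the order on vertices (i.e.\ on isomorphism classes of simples), I would define $B$ as the span of those admissible paths $p$ in $A$ along which the source is the unique $\leq$-minimal vertex (the ``ascending'' paths), and dually $C$ as the span of those admissible paths whose target is the unique $\leq$-minimal vertex (the ``descending'' paths). Both subspaces contain $L = \bigoplus_i \field e_i$, and one has $B \cap C = L$. This choice is dictated by wanting to match the standard modules $\Delta_i^A$, whose explicit path basis is supplied by Green--Schroll.

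The argument then proceeds in three main steps. First, I would verify that $B$ and $C$ are closed under multiplication: concatenation preserves the ascending (resp. descending) condition, and a product of admissible paths is either admissible or zero in $A$. Second, I would establish the triangular decomposition $A \cong C \otimes_L B$ by showing that every admissible path $p$ in $A$ factors uniquely as $p = p_C \cdot p_B$ with $p_C \in C$ and $p_B \in B$, the factorisation being obtained by cutting $p$ at its $\leq$-minimal vertex. Third, using that $A$ is then free as a right $B$-module (with basis indexed by the descending paths), the induction functor $A \otimes_B -$ is automatically exact, and a direct computation shows $A \otimes_B L_i^B \cong \Delta_i^A$, verifying the exact Borel property in the sense of König. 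By the symmetric argument applied to $A^{\op}$, the algebra $C$ is a $\Delta$-subalgebra, yielding the full Reedy decomposition.

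For uniqueness, I would assume $B'$ is another exact Borel subalgebra of $A$ with a path basis, and show $B = B'$ by induction on path length. Since $B'$ must contain a complete set of primitive orthogonal idempotents of $A$, the trivial paths lie in $B'$, so $L \subseteq B'$. Any arrow in the chosen path basis of $B'$ must be ascending, for otherwise $A \otimes_{B'} L_i^{B'}$ would acquire a composition factor not appearing in $\Delta_i^A$. Longer paths in the basis of $B'$ are then forced by closure under multiplication together with a dimension count matching the $e_j$-components of $\Delta_i^A$. The analogous argument handles $C$.

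The main technical obstacle throughout is the second step: verifying that the cut-at-minimum factorisation respects the monomial ideal $I$, so that $p_C$ and $p_B$ are individually admissible whenever $p$ is. This is precisely where the Green--Schroll criterion for quasi-heredity enters decisively, and I expect it to be the one point at which the quasi-hereditary hypothesis cannot be sidestepped by purely combinatorial manipulation.
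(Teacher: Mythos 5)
Your overall architecture (define $B$ and $C$ by an explicit path condition, prove closure, cut each path to get $A\cong C\otimes_L B$, deduce exactness of induction from freeness and compute $A\otimes_B L_i^B$, pass to $A^{\op}$ for $C$) is the paper's architecture. But the path condition you impose on $B$ is not the correct one, and the construction fails with it. You take $B$ to be the span of the paths whose \emph{source} is the unique $\leq$-minimal vertex of the path; the subalgebra that actually works is spanned by the paths whose \emph{target} is the unique $\leq$-maximal vertex (equivalently, the products of right-minimal direction-preserving paths), and the triangular factorisation cuts each path at its \emph{maximal} vertex, not at its minimal one. The two conditions agree on arrows that go up but diverge on longer paths, in both directions. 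Concretely, let $Q\colon 2\xrightarrow{a}1\xrightarrow{b}3$ with the natural order and $A=\field Q$. The path $ba\colon 2\to 3$ has minimal vertex $1$, which is an inner vertex, so $ba\notin B$ in your sense; hence $\rad(B)e_2=0$ and
\begin{align*}
A\otimes_B L_2^B\cong Ae_2/A\rad(B)e_2=P_2\neq P_2/\langle ba\rangle=\Delta_2^A,
\end{align*}
so condition (2) of König's definition fails (the correct subalgebra contains $ba$, since $t(ba)=3=\max(ba)$). In the opposite direction, for $Q\colon 1\xrightarrow{\alpha}3\xrightarrow{\beta}2$ together with an arrow $\gamma\colon 1\to 2$, the path $\beta\alpha$ lies in your $B$ (its source $1$ is the unique minimal vertex) but must not: one checks that $A$ is then not projective as a right module over your $B$ (a multiplicity count gives $1+3+2\cdot 2=8\neq 7=\dim A$), so exactness of induction fails. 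Thus your $B$ neither contains nor is contained in the correct one.

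The same reversal affects the decomposition step: cutting a path at its minimal vertex writes it as (part ascending from the minimum)$\cdot$(part descending to the minimum), i.e.\ as an element of $B\cdot C$ rather than $C\cdot B$ in the paper's composition convention, so even formally you would be producing $A\cong B\otimes_L C$, which is not the Reedy decomposition in which $B$ acts first. If you replace your conditions by the correct ones (for $B$: target equals the unique maximum; for $C$: source equals the unique maximum; cut at the maximum), the rest of your plan goes through essentially as in the paper, and the point where quasi-heredity enters is the one you identify, in the precise form $[\nabla_i^A:L_i^A]=1$: this is what guarantees that the maximal vertex of a nonzero path in $A$ occurs only once, so that the cut is well defined and the two halves are again nonzero in $A$. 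Finally, your uniqueness step is too thin as stated: ``closure under multiplication together with a dimension count'' does not force a longer basis path of $B'$ to be a product of right-minimal direction-preserving ones. The paper instead first shows that any exact Borel subalgebra $B'$ containing $L$ yields a decomposition $A=\bigoplus_i (p_i+I)B'$ with the $p_i$ paths, then proves that every non-direction-preserving nontrivial path lies in the complement of $B'$ in this decomposition, and deduces $B'=B_{\min}$ from the already established decomposition $A=\bigoplus_{q\in\mathcal P}(q+I)B_{\min}$; some such argument is needed to rule out extra basis paths in $B'$.
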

While exact Borel subalgebras are in general not unique up to isomorphism,
there are several known uniqueness results for regular exact Borel subalgebras, see for example \cite{Conde, Miemietz, uniqueness}. In particular, if $A$ is a quasi-hereditary monomial algebra, then since $A$ is basic it follows from \cite[Theorem 8.4]{uniqueness} that if $A$ admits a regular exact Borel subalgebra, then this is up to inner automorphism the unique exact Borel subalgebra of $A$. However, if $A$ does not admit a regular exact Borel subalgebra, then even in the monomial case it may admit exact Borel subalgebras which are not conjugate to one another, as in Example \ref{counterexample}.\\
Given that regularity plays such a central role in the study of exact Borel subalgebras, we furthermore investigate under which conditions a quasi-hereditary monomial algebra $A$ admits a regular exact Borel subalgebra.  Under the assumption that $\field$ is algebraically closed, we obtain a rather technical characterization in terms of paths and relations of $A$. In the hereditary case, this simplifies to the following:
\begin{theorem*}\ref{proposition_regularity_hereditary}
    Suppose $A=\field Q$ is a hereditary algebra, and $\leq$ is a total order on the vertices of $Q$. Then $(A, \leq)$ admits a regular exact Borel subalgebra if and only if the following two conditions hold:
    \begin{enumerate}
        \item For all paths $p:i\rightarrow k$ and $q:j\rightarrow k$ with $\max(p)=k>i$ and $j>k$ there is a path $r:j\rightarrow i$ such that $q=pr$.
        \item For any path $q:j\rightarrow i$ with $j>i$, there is at most one path $p$ starting in $i$ such that $p:i\rightarrow k$ for some $j>k>i$ and $k'<i$ for every other vertex $k'$ that $p$ passes through.
    \end{enumerate}
\end{theorem*}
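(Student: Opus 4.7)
The plan is to specialise the general, more technical path-theoretic characterisation of regularity promised just above (for arbitrary monomial quasi-hereditary $A$) to the hereditary case $A=\field Q$. By the main theorem, $(A,\leq)$ already carries its canonical exact Borel subalgebra $B$ with a basis of paths, and combining the uniqueness clause of the main theorem with \cite[Theorem 8.4]{uniqueness} shows that $B$ is the only candidate to be a regular exact Borel subalgebra: if one exists, it must be inner-isomorphic to $B$. So the theorem reduces to checking regularity of this single explicit $B$.

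Next I would unpack the relevant modules. By the Green--Schroll criterion, $\Delta_i^A$ is the quotient of $P_i^A$ by the ideal generated by the arrows from $i$ to higher-indexed vertices, so a $\field$-basis of $\Delta_i^A$ is given by the paths starting at $i$ whose subsequent vertices all lie below $i$; the analogous description holds for $\Delta_i^B$ using the prescribed path basis of $B$. Conde's criterion, specialised to $\operatorname{gldim}A=1$, then reduces regularity to the assertion that the canonical comparison map $\Ext^1_B(\Delta_i^B,\Delta_j^B)\to\Ext^1_A(\Delta_i^A,\Delta_j^A)$ is an isomorphism for all $i,j$. Extension classes on the $A$-side are represented by path data joining $\topp(\Delta_i^A)$ to constituents of $\soc(\Delta_j^A)$; these are naturally parametrised by triples $(p,r,q)$ with $q=pr$, $p\colon i\to k$, $r\colon j\to k$ and $j>k>i$, precisely the factorisation data appearing in condition (1).

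The final step is then to read conditions (1) and (2) off this Ext comparison. Surjectivity forces every $A$-extension to lift to a $B$-extension, which translates into the existence of a factorisation $q=pr$ whenever $p\colon i\to k$ has $\max(p)=k>i$ and $j>k$; this is exactly condition (1). Injectivity forces any two $B$-extensions whose images agree in $\Ext^1_A$ to coincide already over $B$, and unwinding the path basis of $\Delta_i^B$ converts this into the uniqueness statement of condition (2): at most one such $p$ can start at $i$ and visit only vertices below $i$ apart from its endpoint $k$ with $j>k$. I expect the main obstacle to lie in the injectivity direction, where one must carefully identify the kernel of the comparison map with the locus where condition (2) fails, since several a priori distinct $B$-classes can collapse to the same $A$-class via path-factorisations; the surjectivity direction should follow more directly from the explicit factorisation supplied by condition (1).
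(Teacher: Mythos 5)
Your overall strategy is reasonable and not far from the paper's: reduce to the canonical path-basis Borel subalgebra $B_{\min}$ via the uniqueness results, then test regularity through a degree-one Ext condition computed with paths. (The paper works with Conde's intrinsic criterion, bijectivity of $\Ext^1_A(\Delta_i^A,\Delta_j^A)\to\Ext^1_A(\Delta_i^A,L_j^A)$, which avoids computing anything in $B$ at all, whereas you use the comparison map $\Ext^1_B(L_i^B,L_j^B)\to\Ext^1_A(\Delta_i^A,\Delta_j^A)$.) However, two of your key identifications are wrong. First, $\Ext^1_A(\Delta_i^A,\Delta_j^A)$ is \emph{not} parametrised by triples $(p,r,q)$ with $q=pr$. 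Computing with the presentation $\bigoplus_{p\in E_i}P_{t(p)}\to P_i\to\Delta_i^A\to 0$, one gets a spanning set indexed by pairs $(p,q)$ with $p\colon i\to k$ right-minimal direction-preserving and $q\colon j\to k$ \emph{any} path with $\max(q)=j$, modulo relations coming from $\Hom_A(P_i,\Delta_j^A)$; no factorisation of $q$ through $p$ is imposed. Condition (1) is precisely the assertion that every such pair \emph{does} admit a factorisation, so with your parametrisation the surjectivity argument becomes circular and condition (1) vacuous. (Also, condition (1) quantifies over all $p$ with $\max(p)=t(p)>s(p)$, not only right-minimal ones; deducing the general case from the right-minimal case needs an induction on the length of $p$, which the paper carries out and you omit.)

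Second, the split ``condition (1) $=$ surjectivity, condition (2) $=$ injectivity'' of your comparison map does not hold. For $B=B_{\min}$ and $I=(0)$ that map is \emph{always} injective: $\Ext^1_B(L_i^B,L_j^B)$ has basis $E_{ij}$, whose images are the classes $(p,e_j^A)$, and these never occur in the relations (which only involve pairs with non-trivial second entry), so they remain linearly independent in $\Ext^1_A(\Delta_i^A,\Delta_j^A)$. Both conditions are obstructions to \emph{surjectivity}: a class $(p,q)$ with $q$ non-trivial lies in the image if and only if $q=pq''$ for some $q''\colon j\to i$ (condition (1)) \emph{and} the relation $\sum_{p'}(p',p'q'')$ collapses to the single term $(p,pq'')$, i.e.\ no second right-minimal direction-preserving path out of $i$ has target below $j$ (condition (2)). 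Concretely, for the quiver with arrows $1\to 2$, $1\to 3$, $4\to 1$ and the natural order, condition (1) holds, condition (2) fails, and one computes $\Ext^1_B(L_1^B,L_4^B)=0$ while $\Ext^1_A(\Delta_1^A,\Delta_4^A)\cong\field$ -- a failure of surjectivity, not injectivity. Condition (2) does correspond to an injectivity statement in the paper, but for the \emph{other} map $\Ext^1_A(\Delta_i^A,\Delta_j^A)\to\Ext^1_A(\Delta_i^A,L_j^A)$; conflating the two maps is where your argument breaks.
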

As an example, we consider path algebras of quivers $Q(n_a, n_b, n_c)$ of the form
\[\begin{tikzcd}[ampersand replacement=\&]
	\&\&\&\& {b_1} \& \dots \& {b_{n_b}} \\
	{a_{n_a}} \& \dots \& {a_1} \& {a_0} \\
	\&\&\&\& {c_1} \& \dots \& {c_{n_c}}
	\arrow[from=1-5, to=1-6]
	\arrow[from=1-6, to=1-7]
	\arrow[from=2-1, to=2-2]
	\arrow[from=2-2, to=2-3]
	\arrow[from=2-3, to=2-4]
	\arrow[from=2-4, to=1-5]
	\arrow[from=2-4, to=3-5]
	\arrow[from=3-5, to=3-6]
	\arrow[from=3-6, to=3-7]
\end{tikzcd}\]
for $n_a, n_b, n_c\geq 0$ and investigate when they admit a regular exact Borel subalgebra. In Theorem \ref{thm_counting}, we give a formula counting the number of quasi-hereditary structures on path algebras of quivers of the form $Q(n_a, n_b, n_c)$ which admit a regular exact Borel subalgebra.  For quivers of Dynkin type D and E, we contrast this with the overall number of quasi-hereditary structures, which was calculated in \cite{combinatorics}.  
For example, for the path algebra of the quiver $Q(n, 1, 1)$, we obtain the following:
\begin{theorem*}(This is a special case of Theorem \ref{thm_counting})\\
    Out of the $2C_{n+3}-3C_{n+2}$ distinct quasi-hereditary structures on the path algebra of the quiver $Q(n, 1, 1)$, $4C_{n+2}-4C_{n+1}$ admit a regular exact subalgebra. Here, $C_k$ denotes the $k$-th Catalan number. In the limit, the share of quasi-hereditary structures admitting a regular exact Borel subalgebra tends to $3/5$.
\end{theorem*}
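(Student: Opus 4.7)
The plan is to deduce the statement as a specialisation of Theorem \ref{thm_counting}, and then to extract the asymptotic ratio using the standard asymptotics of Catalan numbers. Since the substantive combinatorial work is done in the general theorem, what remains in this special case is to verify that substituting $(n_a,n_b,n_c)=(n,1,1)$ into the formulas of Theorem \ref{thm_counting} yields the claimed closed forms, and to compute the limit.

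For the total count, the quiver $Q(n,1,1)$ is a Dynkin quiver on $n+3$ vertices (of type $\textup{D}_{n+3}$ for $n\geq 1$, and of type $\textup{A}_3$ for $n=0$), so $2C_{n+3}-3C_{n+2}$ is the formula from \cite{combinatorics}. For the count $4C_{n+2}-4C_{n+1}$ of structures admitting a regular exact Borel subalgebra, I would apply the hereditary regularity criterion of Theorem \ref{proposition_regularity_hereditary} directly. A total order $\leq$ on the vertices $\{a_0,\dots,a_n,b_1,c_1\}$ gives a regular exact Borel subalgebra if and only if the two path conditions of that theorem hold, and for $Q(n,1,1)$ these translate into explicit restrictions on (a) how the tail $a_1,\dots,a_n$ may be interleaved in $\leq$ relative to the branching vertex $a_0$, and (b) the relative positions of the leaves $b_1,c_1$ with respect to $a_0$.

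The main obstacle is the general theorem: one needs to split into cases according to the relative positions of $a_0$, $b_1$, and $c_1$ in $\leq$, translate conditions (1) and (2) of Theorem \ref{proposition_regularity_hereditary} into constraints on the intermingling of the tail with the leaves, and count these constraints using Catalan statistics. The case analysis and inclusion--exclusion that collapse the sum to $4C_{n+2}-4C_{n+1}$ are precisely the work carried out in the proof of Theorem \ref{thm_counting}; in the $(n,1,1)$ case I would expect each of the four configurations of $(b_1,c_1)$ relative to $a_0$ to contribute either $C_{n+2}$ or $C_{n+1}$ up to sign.

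Finally, using $C_{k+1}/C_k = \frac{2(2k+1)}{k+2} \to 4$ as $k \to \infty$, dividing numerator and denominator by $C_{n+2}$ gives
\[
\frac{4C_{n+2}-4C_{n+1}}{2C_{n+3}-3C_{n+2}} \;=\; \frac{4-4\,C_{n+1}/C_{n+2}}{2\,C_{n+3}/C_{n+2}-3} \;\longrightarrow\; \frac{4-1}{8-3} \;=\; \frac{3}{5},
\]
establishing the claimed limiting share.
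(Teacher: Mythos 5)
Your proposal is correct and follows essentially the same route as the paper: the statement is obtained by specializing Theorem \ref{thm_counting} to $(n_a,n_b,n_c)=(n,1,1)$ (namely $C_2C_{n+2}+C_2C_{n+2}-C_{n+1}C_2C_2=4C_{n+2}-4C_{n+1}$ since $C_2=2$), taking the total count $2C_{n+3}-3C_{n+2}$ from \cite{combinatorics}, and computing the limit via $C_{k+1}/C_k\to 4$, exactly as you do. The middle paragraphs about re-running the case analysis of Theorem \ref{proposition_regularity_hereditary} are unnecessary for this special case --- the one-line substitution you describe at the outset (and should write out explicitly) already completes the argument.
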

The structure of the article is as follows:\\
In Section \ref{section2} we fix our notation, and recall basic definitions and statements regarding quasi-hereditary algebras, exact Borel subalgebras and basic monomial algebras.\\
In Section \ref{section3} we prove that every basic monomial quasi-hereditary algebra  $A$ admits a Reedy decomposition. We explicitly describe the corresponding exact Borel subalgebra $B_{\min}$, and give a decomposition of $A$ as a right $B_{\min}$-module. By \cite{Conde2}, any exact Borel subalgebra is normal. Here, we give an explicit splitting of the inclusion $B_{\min}\rightarrow A$, exhibiting the normality of $B_{\min}$.\\
In Section \ref{section4}, we show that  $B_{\min}$ is the unique exact Borel subalgebra of $A$ which has a basis given by paths. We give an example showing that $A$ may admit other exact Borel subalgebras which do not have such a basis and are not conjugate to  $B_{\min}$.\\
In Section \ref{section5} we establish a criterion for regularity of $B_{\min}$ in terms of paths in $A$, under the assumption that $\field$ is algebraically closed. Since this criterion is nevertheless somewhat complicated, we give a simplified version in case that $A$ is hereditary.\\
In Section \ref{section6} we consider idempotent subalgebras and quotient algebras of a given monomial algebra $A$, and investigate in how far our constructions are compatible with these operations.\\
Finally, in Section \ref{section7} we study path algebras of quivers of the form $Q(n_a, n_b, n_c)$ and $Q(n_a, n_b, n_c)^{\op}$. These quivers include the ADE Dynkin diagrams when endowed with an orientation such that every vertex has at most one ingoing and at most one outgoing arrow. In view of Gabriel's theorem \cite{Gabriel1972} and the deconcatenation results in \cite{combinatorics} and \cite{Markus}, such ADE quivers are precisely the representation-finite path algebras of interest from the point of view of quasi-hereditary algebras, as the other cases can be reduced, by \cite{combinatorics} and \cite{Markus}, to a combination of the cases above and path algebras of quivers of type A, which were already studied in  \cite{combinatorics} and \cite{Markus}.
In Corollary \ref{corollary_special_quiver_regular}, give a simple criterion for when $\field Q(n_a, n_b, n_c)$ respectively $\field Q(n_a, n_b, n_c)^{\op}$ admits a regular exact Borel subalgebra. We use this in order to count the number of quasi-hereditary structures on $\field Q(n_a, n_b, n_c)$ and $\field Q(n_a, n_b, n_c)^{\op}$ that admit regular exact Borel subalgebras, and specialize the result to quivers of type $\textup{D}$ and $\textup{E}$ endowed with an orientation such that every vertex has at most one ingoing and one outgoing arrow. 

\section{Notation and Preliminaries}\label{section2}
Throughout, let $\field$ be a field. All $\field$-vector spaces we consider are finite-dimensional, and all tensor products, unless otherwise stated, are over $\field$. For any subset $S$ of a vector space $V$ we denote by $\langle S\rangle $ the $\field$-span of $S$.\\
Let $Q=(V_Q, E_Q)$ be a quiver with vertex set $V_Q=\{1, \dots, n\}$, let $I$ be an admissible monomial ideal in $\field Q$, that is, $I$ is generated by a set of paths of length at least two in $Q$, and let $A=\field Q/I$ be the corresponding basic monomial algebra. 
Denote by $e_i$ the trivial path in $\field Q$ at the vertex $i$, and by $e_i^A=e_i+I$ the corresponding idempotent in $A$.\\
Note that every element $x\in A$ can be written in a unique way as a sum $x=\sum_i p_i+I$ such that $p_i\notin I$ is a path in $Q$ for every $i$. We call $\sum_i p_i$ the standard representative of $x$, and we call $x$ a path in $A$ if its standard representative is a path in $Q$. Note that, in particular, $0$ is not a path, since the standard representative of $0$ is $0$.\\
We call $x$ homogeneous of degree $k$ if its standard representative consists of paths of length $k$. This gives $A$ the structure of a graded algebra. Moreover,  $L:=A_0=\bigoplus_{i\in V_Q}\field e_i^A$ is a maximal semisimple subalgebra of $A$.\\
For a path $p\in e_j(\field Q)e_i$ in $Q$ we write $s(p)=i$, $t(p)=j$, and $p:i\rightarrow j$. Moreover, we denote by $V_p$ the set of all vertices which $p$ passes through, i.e. $k\in V_p$ if and only if there are (possibly trivial) paths $p', p''\in \field Q$ such that $p=p'p''$ and $s(p')=t(p'')=k$. We call $k$ an inner vertex if there are non-trivial paths $p', p''\in \field Q$ such that $p=p'p''$ and $s(p')=t(p'')=k$ and denote by $\Inn(p)$ the set of inner vertices of $p$.
Moreover, we write $\max(p):=\max V_p$ for the maximal vertex, with respect to the natural order, that $p$ passes through.\\
For $1\leq i\leq n$, let $P_i^A:=Ae_i^A$ be the projective, $I_i^A:=\Hom_{\field}(e_iA, \field)$ be the injective and $L_i^A:=P_i^A/\rad(P_i^A)=Ae_i^A/\rad(A)e_i^A$ be the simple corresponding to $i$, and let $\Sim(A)=\{L_i^A, \dots, L_n^A\}$. Moreover, let $\leq_A$ be the partial order on $\Sim(A)$ given by 
\begin{align*}
    L_i^A\leq_A L_j^A:\Leftrightarrow i\leq j.
\end{align*}
For $1\leq i\leq n$ denote by $\Delta_i^A$ the $i$-th standard module of $A$, i.e. the maximal factor module of $P_i$ such that $[\Delta_i^A: L_j]=0$ for all $j>i$, and by $\nabla_i^A$ the $i$-th costandard module of $A$, i.e. the maximal submodule  of $I_i$ such that $[\nabla_i^A: L_j]=0$ for all $j>i$.
Denote by $\filt(\Delta^A)$ the full subcategory of $A$ consisting of all modules which admit a filtration by standard modules.
Recall that $(A, \leq_A)$ is called quasi-hereditary if and only if $A\in \filt(\Delta^A)$ and ${\End_i(\Delta_i^A)\cong \field}$ for all $1\leq i\leq n$.
Quasi-hereditary algebras were first defined by Cline, Parshall and Scott \cite{CPS}; an introduction can be found for example in \cite{DlabRingel}.
By \cite{CPS}, if $(A, \leq_A)$ is quasi-hereditary, then so is $(A^{\op}, \leq_A)$, and $\Delta_i^{A^{\op}}\cong \Hom_{\field}(\nabla_i^A, \field)$ for every $1\leq i\leq n$. In particular, if $(A, \leq_A)$ is quasi-hereditary, then $[\nabla_i^A:L_i^A]=1$
for all $1\leq i\leq n$.\\
In \cite{Koenig}, König defined the concept of an exact Borel subalgebra for a quasi-hereditary algebra, which is inspired by Borel subalgebras of Lie algebras.
\begin{definition}
    A subalgebra $B$ of a quasi-hereditary algebra $(A, \leq_A)$ is called an exact Borel subalgebra if
    \begin{enumerate}
        \item The induction functor  \begin{align*}
        A\otimes_B -:\modu B\rightarrow \modu A
    \end{align*}
    is exact.
    \item There is a bijection $\phi:\Sim(B)\rightarrow\Sim(A)$ such that for all $L\in \Sim(B)$ we have
    \begin{align*}
        A\otimes_B L\cong \Delta(\phi(L)).
    \end{align*}
    \item For all $L\in \Sim(B)$ we have $\End_B(L)\cong \End_A(\phi(L))$.
    \item $(B, \leq_B)$ is directed with respect to the partial order
    \begin{align*}
        L\leq_B L':\Leftrightarrow \phi(L)\leq_A\phi(L'),
    \end{align*}
    i.e. for all $L, L'\in \Sim(B)$
        \begin{align*}
        \Ext^1_B(L,L')\neq (0)\Rightarrow L\leq_B L'\Leftrightarrow \phi(L)\leq_A\phi(L').
    \end{align*}
    \end{enumerate}
\end{definition}
Dually, \cite{Koenig} defines the concept of a $\Delta$-subalgebra, and proves the following theorem:
\begin{theorem}\cite[Theorem B]{Koenig}
    Let $(A, \leq_A)$ be a quasi-hereditary algebra. Then $C\subseteq A$ is a $\Delta$-subaglebra if and only if $C^{\op}\subseteq A^{\op}$ is an exact Borel subaglebra of $(A^{\op}, \leq_A)$.
\end{theorem}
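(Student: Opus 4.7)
The plan is to treat the theorem as a bookkeeping exercise in finite-dimensional duality. The definition of a $\Delta$-subalgebra $C \subseteq A$ in \cite{Koenig} consists of the four analogues of the Borel conditions, with the induction functor $A \otimes_B -$ replaced by the coinduction functor $\Hom_C(A, -)$ and the standard modules $\Delta$ replaced by the costandard modules $\nabla$; the bijection-on-simples, endomorphism and directedness conditions are set up in the obvious dual manner. My first step would be to unfold this definition explicitly and lay the four conditions side by side with the four conditions characterizing $C^{\op} \subseteq A^{\op}$ as an exact Borel subalgebra of $(A^{\op}, \leq_A)$, so that the proof becomes a matching exercise.

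The workhorse is the $\field$-linear duality $D = \Hom_\field(-, \field)$, which swaps left and right modules, together with the natural isomorphism
\begin{equation*}
    D\bigl(\Hom_C(A, N)\bigr) \cong A^{\op} \otimes_{C^{\op}} D(N)
\end{equation*}
for $N$ a finite-dimensional left $C$-module. Letting $N$ range over $\modu C$, this translates exactness of $\Hom_C(A, -)$ into exactness of $A^{\op} \otimes_{C^{\op}} -$, giving the equivalence of the two versions of condition (1). Specializing $N$ to a simple module $L$ and combining with the identification $\Delta_i^{A^{\op}} \cong D(\nabla_i^A)$ recalled in the excerpt gives the equivalence of the two versions of condition (2), with the bijection $\phi$ for $C^{\op}$ taken to be the composition of the bijection $\psi$ for $C$ with the identifications $\Sim(C) \cong \Sim(C^{\op})$ and $\Sim(A) \cong \Sim(A^{\op})$ induced by $D$. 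Condition (3) transports via the $\field$-algebra isomorphism $\End_C(L) \cong \End_{C^{\op}}(D L)$, using that $\field$ is commutative so taking opposites of the relevant endomorphism rings does not change them up to isomorphism. Condition (4) uses the natural isomorphism $\Ext^1_{C^{\op}}(D L', D L) \cong \Ext^1_C(L, L')$ for simples $L, L'$.

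The main obstacle is condition (4): the natural Ext isomorphism swaps its two simple arguments, and one must verify that this swap is compensated exactly by the convention used to set up the directedness condition on $\Delta$-subalgebras, which, since these are by design ``Borel on the opposite side,'' has the roles of the two simples exchanged relative to the Borel definition. The order $\leq_A$ on $\Sim(A) = \Sim(A^{\op})$ itself is kept fixed, so once the bookkeeping of which simple plays which role is pinned down, the two directedness conditions coincide. Thereafter all four conditions line up, and no further input about quasi-hereditary algebras is needed beyond the already-cited $\Delta_i^{A^{\op}} \cong D(\nabla_i^A)$.
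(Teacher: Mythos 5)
The statement you are proving is not actually proved in this paper: it is quoted verbatim as a citation of König's Theorem B, so there is no in-paper argument to compare against. Your duality strategy is the standard one and, as far as I can tell, is essentially how the result is established in the cited source: apply $\dual=\Hom_{\field}(-,\field)$, use the natural isomorphism $\dual(\Hom_C(A,N))\cong \dual(N)\otimes_C A\cong A^{\op}\otimes_{C^{\op}}\dual(N)$ to convert coinduction into induction over the opposite algebra, use $\Delta_i^{A^{\op}}\cong \dual(\nabla_i^A)$ (which the paper records in Section~\ref{section2}), and transport the remaining three conditions along the contravariant equivalence $\dual$. Two small caveats. First, your argument takes as input a specific form of the definition of $\Delta$-subalgebra (the four Borel axioms with $\Hom_C(A,-)$ and $\nabla$ in place of $A\otimes_B-$ and $\Delta$); König's definition is phrased somewhat differently, so a complete write-up must first reconcile your assumed axioms with the ones actually given in \cite{Koenig} --- otherwise you risk proving an equivalence with a definition nobody stated. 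Second, your justification of condition (3) via ``$\field$ is commutative so taking opposites of the endomorphism rings does not change them'' is not the right reason: for a simple module over a non-algebraically-closed field the endomorphism ring is a division algebra that need not be isomorphic to its opposite. What you actually need is only that $\dual$ induces $\End_{C^{\op}}(\dual L)\cong \End_C(L)^{\op}$ and $\End_{A^{\op}}(\dual\nabla)\cong \End_A(\nabla)^{\op}$, and that $R\cong S$ if and only if $R^{\op}\cong S^{\op}$; no self-opposite property is required. Your handling of condition (4), where the Ext isomorphism swaps its arguments and this swap is absorbed by the reversed directedness convention for $\Delta$-subalgebras, is the genuinely delicate point and you identify it correctly.
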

By \cite[Theorem 4.1]{Koenig2}, basic quasi-hereditary algebras $(A, \leq_A)$ which admit both an exact Borel subalgebra $B$ and a $\Delta$-subalgebra $C$, such that $L:=B\cap C$ is a maximal semisimple subalgebra of $A$, have a so-called Reedy decomposition, that is, the multiplication in $A$ gives rise to an isomorphism $C\otimes_L B\rightarrow A$. For an extensive account about Reedy decompositions and the connection to Reedy categories, see  \cite{KoenigDaleziosConde, DaleziosStovicek}.\\
Since we indexed the simple modules of $A$ by $1, \dots, n$, we will usually omit $\phi$ and instead index the simple modules of $B$ by $1, \dots, n$, so that $\phi$ corresponds to the bijection $[L_i^A]\mapsto [L_i^B]$.\\
\begin{remark}
    Note that in most of the literature, the underlying field is assumed to be algebraically closed, so that the condition $\End_B(L)\cong \End_A(\phi(L))$ is omitted, but it can be found e.g. in \cite{Conde2}.
\end{remark}
One often considers exact Borel subalgebras with additional properties, here we give a definition of the ones that we will be interested in in this article.
\begin{definition}\label{definition_regular}(\cite[Definition 3.4]{BKK} and \cite[p. 405]{Koenig})
    \begin{enumerate}
        \item An exact Borel subalgebra $B$ of a quasi-hereditary algebra $(A, \leq_A)$ is called normal if the embedding $\iota:B\rightarrow A$ has a splitting as a right $B$-module homomorphism whose kernel is a right ideal in $A$.
        \item An exact Borel subalgebra $B$ of a quasi-hereditary algebra $(A, \leq_A)$ is called regular if it is normal and for every $n\geq 1$ the maps
        \begin{align*}
            \Ext^n_B(L^B, L^B)\rightarrow   \Ext^n_A(\Delta^A, \Delta^A), [f]\mapsto [\id_A\otimes_B f]
        \end{align*}
        are isomorphisms, where $L^B=\bigoplus_{L_i^B\in \Sim(B)}L_i^B$ and $$\Delta^A:=A\otimes_B L^B\cong \bigoplus_{L_i^A\in \Sim(A)}\Delta(L_i^A).$$
        \item An exact Borel subalgebra $B$ of a quasi-hereditary algebra $(A, \leq_A)$ is called strong if it contains a maximal semisimple subalgebra of $A$.
    \end{enumerate}
\end{definition} 
In \cite{Conde2}, it was shown that any exact Borel subalgebra is automatically normal.
However, in general, a quasi-hereditary algebra may not have an exact Borel subalgebra \cite[Example 2.3]{Koenig}, and even if it does, this subalgebra may not be unique up to isomorphism \cite[Example A.4]{KKO}.\\
Nevertheless, over an algebraically closed field, any quasi-hereditary algebra is Morita equivalent to some quasi-hereditary algebra admitting a basic regular exact Borel subalgebra \cite{KKO}, and basic regular exact Borel subalgebras are unique up to inner automorphism \cite{BKK, uniqueness}.

\section{Existence}\label{section3}
In this section, we will show that any quasi-hereditary monomial algebra $A$ admits an exact Borel subalgebra $B$, which we can describe explicitly. Moreover, we will give a Reedy decomposition of $A$ and use this to give an explicit splitting of the inclusion map exhibiting the normality of $B$. Since $A$ is monomial, we want to work directly with paths in $A$, so that we begin by introducing some basic terminology in order to do so.\\
Recall that throughout, we assume that $A=\field Q/I$ is monomial, where the vertices of $Q$ are given by the set $\{1, \dots, n\}$ and that $\Sim(A)$ is equipped with the partial order $\leq_A$ corresponding to the natural order on the vertices of $Q$.
\begin{definition}
     We call an element $x\in e_j^AAe_i^A$ \textbf{direction-preserving} if $i< j$ and \textbf{non-direction-preserving} if $i\geq j$. Denote by $B_{\max}$ the subalgebra of $A$ generated by  $L$ and all direction-preserving paths.\\
    We call $x\in A$ \textbf{right-minimal direction-preserving} if $x\in B_{\max}$ \\
    and ${x\notin \rad(A)\rad(B_{\max})}$.
    Moreover, we let $B_{\min}$ be the subalgebra of $A$ generated by all right-minimal direction preserving paths.
\end{definition}
A direction-preserving path $p$ is a path going from a vertex $i=s(p)$ to a vertex $j=t(p)$ with $j>i$. We can visualize this as follows:
\begin{center}
\begin{tikzpicture}
    \draw[thick,->] (0,0) -- (4.5,0) node[anchor=north west] {$l(p)$};
\draw[thick,->] (0,0) -- (0,2.5) node[anchor=south east] {vertices};
\draw[thick] (0,1) node[anchor= south east] {$s(p)=i$} -- (1,0.5) -- (2,1.5) -- (3, 0.5) -- (4, 2) node[anchor = south west] {$t(p)=j$};
\filldraw (0,1) circle (1pt);
\filldraw (4,2) circle (1pt);
\end{tikzpicture}
\end{center}
On the other hand, a right-minimal direction-preserving path is a path going from a vertex $i=s(p)$ to a vertex $j=t(p)$ with $j>i$ and such that all inner vertices $k$ of $p$ are less than or equal to $i$. In other words, $j=t(p)$ is the only vertex appearing in $p$ which is bigger than $i$:
\begin{center}
\begin{tikzpicture}
    \draw[thick,->] (0,0) -- (4.5,0) node[anchor=north west] {$l(p)$};
\draw[thick,->] (0,0) -- (0,2.5) node[anchor=south east] {vertices};
\draw[thick] (0,1.5) node[anchor= south east] {$s(p)=i$} -- (1,1) -- (2,1.5) -- (3, 0.5) -- (4, 2) node[anchor = south west] {$t(p)=j$};
\filldraw (0,1.5) circle (1pt);
\filldraw (4,2) circle (1pt);
\draw[dashed] (0,1.5) -- (4.5, 1.5);
\end{tikzpicture}
\end{center}
We easily observe the following fact:
\begin{remark}\label{remark_borel_is_subset}
    Suppose $(A, \leq_A)$ is quasi-hereditary and $B$ is an exact Borel subalgebra of $(A, \leq_A)$ containing $L$. Then, since $B$ is directed, $B\subseteq B_{\max}$.
\end{remark}
On the other hand, it is also easy to note the following:
\begin{lemma}\label{lemma_borel_includes}
    Suppose $(A, \leq_A)$ is a basic monomial quasi-hereditary algebra and $B$ is an exact Borel subalgebra which has a basis consisting of paths. Then $B$ contains every right-minimal direction-preserving path. In particular, any exact Borel subalgebra which has a basis consisting of paths contains the subalgebra $B_{\min}$ of $A$.
\end{lemma}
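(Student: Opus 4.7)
The plan is to combine the abstract consequence of the exact Borel property---the isomorphism $\Delta_i^A\cong A\otimes_B L_i^B$---with the explicit path description of standard modules in the monomial setting, and then to exploit right-minimality to pin down a unique factorization.

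First I would verify that $L\subseteq B$. Since $1_A=\sum_i e_i^A$ lies in $B$ and $B$ has a basis consisting of paths, writing $1_A$ in that basis and comparing with the unique standard representative $\sum_i e_i$ forces each $e_i^A$ to itself be a basis element of $B$. Hence $L\subseteq B$, and Remark~\ref{remark_borel_is_subset} yields $B\subseteq B_{\max}$; in particular every non-trivial basis path of $B$ is direction-preserving, so $\rad(B)$ is spanned by non-trivial direction-preserving paths in $B$, and $\rad(B)\cdot e_i^A$ is spanned by those which moreover start at $i$.

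Next I would identify the kernel $N_i$ of the canonical surjection $P_i^A\twoheadrightarrow \Delta_i^A$ with $A\cdot\rad(B)\cdot e_i^A$. A standard argument in the monomial setting shows that $N_i$ has basis the paths starting at $i$ which pass through at least one vertex strictly greater than $i$. Since every non-trivial path in $\rad(B)\cdot e_i^A$ is direction-preserving with source $i$, it ends at a vertex $>i$ and hence lies in $N_i$; thus $A\cdot\rad(B)\cdot e_i^A\subseteq N_i$. On the other hand, the exact Borel property gives $\Delta_i^A\cong A\otimes_B L_i^B\cong P_i^A/A\cdot\rad(B)\cdot e_i^A$ as left $A$-modules, so a dimension count forces equality $A\cdot\rad(B)\cdot e_i^A=N_i$.

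The heart of the argument is then combinatorial. Let $p:i\to k$ be a right-minimal direction-preserving path. Since $k>i$, we have $p\in N_i=A\cdot\rad(B)\cdot e_i^A$. Because $A$ is monomial with basis of paths, $p$ itself must appear in the natural spanning set, i.e.\ $p=q\cdot r$ for some non-trivial path $r$ in $B$ with $s(r)=i$ (necessarily direction-preserving) and some path $q$ with $s(q)=t(r)$. As $r$ is non-trivial and direction-preserving, $t(r)>i$. If $q$ were also non-trivial, then $p=qr$ would be a non-trivial factorization exhibiting $t(r)$ as an inner vertex of $p$ strictly greater than $i$, contradicting right-minimality. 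Hence $q=e_k$ is trivial, forcing $r=p$, and therefore $p\in B$. The ``in particular'' claim is immediate from the definition of $B_{\min}$ as the subalgebra generated by right-minimal direction-preserving paths.

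I expect the main obstacle to be the identification $A\cdot\rad(B)\cdot e_i^A=N_i$ in the second step: one must take care that the abstract isomorphism $\Delta_i^A\cong A\otimes_B L_i^B$ actually corresponds to the concrete quotient of $P_i^A$ by $A\cdot\rad(B)\cdot e_i^A$, so that the two submodules agree as subsets of $P_i^A$ rather than merely up to some automorphism of $\Delta_i^A$. Once that identification is in place, the monomial structure together with right-minimality determines the factorization uniquely and the rest follows by inspection.
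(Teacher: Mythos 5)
Your proposal is correct and follows essentially the same route as the paper: both reduce to showing $p\in A\rad(B)e_i^A$ via the isomorphism $\Delta_i^A\cong A\otimes_B L_i^B\cong Ae_i^A/A\rad(B)e_i^A$, and then use the monomial structure to factor $p$ as a path times a non-trivial path of $\rad(B)$, with right-minimality forcing the left factor to be trivial. The only (harmless) variation is that you identify the full kernel $N_i=A\rad(B)e_i^A$ by an inclusion plus dimension count, whereas the paper gets $p\in A\rad(B)e_i^A$ directly from $e_j^A\Delta_i^A=0$.
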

\begin{proof}
    First, note that since $B$ is an exact Borel subalgebra generated by paths, $B$ must contain all trivial paths.
    Let $p\in e_j^A Ae_i^A$ be a right-minimal direction-preserving path. Then, since $i<j$, $\Delta_i$ has no composition factor isomorphic to $L_j$, so that $e_j^A \Delta_i=0$. Thus $p\Delta_i=(0)$. Since $\Delta_i\cong A\otimes_B L_i^B\cong Ae_i/A\rad(B)e_i$, we have $p\in A\rad(B)e_i$. In particular, there exists $b_1, \dots, b_n\in \rad(B)$ and $a_1, \dots a_n\in A$ such that \\
    $p=\sum_{l=1}^n a_l b_l$. Since both $ \rad(B)$ and $\rad(A)$ have a basis given by paths, we can assume $b_l$ and $a_l$ to be paths, by decomposing them further if necessary. Since $p$ is a path, we thus obtain $a_lb_l=0$ for all but one $l$. Hence $p=a_lb_l$, where $b_l$ is a path $b_l\in e_s^A\rad(B)e_i^A$ and $a_l$ is a path $a_l\in e_j^AAe_s^A$ for some $s$. Now since $p\notin \rad(A)\rad(B_{\max})$ and $\rad(B)\subseteq \rad(B_{\max})$, we have that $a_l\notin \rad(A)$. Thus $a_l=e_j=e_s$, so that $b_j=p\in B$.
\end{proof}
In the remainder of this section, we will show that $B_{\min}$ is in fact an exact Borel subalgebra of $(A, \leq_A)$ whenever $(A, \leq_A)$ is quasi-hereditary.
We begin by describing the paths in $B_{\min}$. Except for the trivial paths, these are always non-zero non-empty products of right minimal direction-preserving paths. As such, they graphically look somewhat like this:
\begin{center}
\begin{tikzpicture}
    \draw[thick,->] (0,0) -- (8.5,0) node[anchor=north west] {$l(p)$};
\draw[thick,->] (0,0) -- (0,3) node[anchor=south east] {vertices};
\draw[thick] (0,1) node[anchor= south east] {$s(p_3)$} -- (1,1.5) node[anchor = south west] {$t(p_3)=s(p_2)$} -- (2,1) -- (3, 0.5) -- (4, 2) node[anchor = south west] {$t(p_2)=s(p_1)$} -- (5, 0.5) -- (6, 1)--  (7, 2.5) node[anchor = south west] {$t(p_1)$};
\filldraw (0, 1) circle (1pt);
\filldraw (1,1.5) circle (1pt);
\filldraw (4, 2) circle (1pt);
\filldraw (7, 2.5) circle (1pt);
\end{tikzpicture}
\end{center}
here for $p+I=(p_1+I)(p_2+I)(p_3+I)$. One can see that in such paths, the last vertex is always bigger than all other vertices that the path passes through. Let us prove this formally.
\begin{lemma}\label{lemma_product}
    Let $p+I=\prod_{i=1}^L (p_l+I)=(p_1+I)(p_2+I)\cdots (p_L+I)$ be a non-zero product of right-minimal direction preserving paths and $L\geq 1$. Then $k<t(p)$ for any interior vertex $k$ of $p$. In particular, since $p$ is directed, so that $s(p)<t(p)$, this implies that $\max(p)=t(p)$.
\end{lemma}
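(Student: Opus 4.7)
The plan is to argue directly, without formal induction on $L$, by analysing the vertices visited during the traversal of $p$. The key step is to show that $t(p_1)$ occurs in the vertex sequence of $p$ only at the very terminal position; both the interior vertex bound and the equality $\max(p) = t(p)$ then follow. The main ingredient is that the targets $t(p_l)$ form a strictly decreasing sequence in $l$, coming from direction-preservation of each $p_l$ together with the matching conditions forced by non-vanishing of the product.

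Concretely, each $p_l$ is direction-preserving, so $s(p_l) < t(p_l)$; and since $p_1 p_2 \cdots p_L$ is non-zero in $A$ the paths compose in $\field Q$, giving $s(p_l) = t(p_{l+1})$ for every $l < L$. Combining,
\[ t(p_L) < t(p_{L-1}) < \cdots < t(p_2) < t(p_1), \]
so $t(p_l) \leq t(p_1)$ with equality only when $l = 1$. Right-minimality of each $p_l$ yields $\Inn(p_l) \subseteq \{k : k \leq s(p_l)\}$, so every vertex of $p_l$ is $\leq t(p_l) \leq t(p_1)$, with the value $t(p_1)$ realised only as the terminal vertex of $p_1$. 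Hence every vertex in $V_p = \bigcup_l V_{p_l}$ is $\leq t(p_1)$, and $t(p_1)$ appears in the vertex sequence of $p$ only at the terminal position.

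For the interior vertex claim, let $k \in \Inn(p)$, so $p = r_1 r_2$ with $r_1, r_2$ non-trivial and $s(r_1) = t(r_2) = k$. Since $r_1$ is non-trivial, $r_2$ must end strictly before the terminal position of $p$, so by the previous paragraph $k = t(r_2) < t(p_1) = t(p)$. For the "in particular" statement, $s(p) = s(p_L) < t(p_L) \leq t(p_1) = t(p)$ gives $s(p) < t(p)$, and combining this with the interior vertex bound and $V_p = \{s(p), t(p)\} \cup \Inn(p)$ yields $\max(p) = t(p)$. The main obstacle I anticipate is the subtle point of ruling out that $t(p)$ itself could be an interior vertex of $p$—something which a priori could happen if $p$ revisited its terminus via a cycle—and this is precisely what the strict descent of the $t(p_l)$'s is designed to prevent.
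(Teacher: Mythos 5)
Your argument is correct and is essentially the paper's proof with the induction on $L$ unrolled: the strictly decreasing chain $t(p_L)<\cdots<t(p_1)$ forced by $s(p_l)=t(p_{l+1})<t(p_l)$, combined with the fact that each $p_l$ lies entirely at or below $t(p_l)$, is exactly what the inductive step propagates, and your positional argument that the value $t(p_1)$ occurs only at the terminal spot correctly handles the cycle subtlety. The one step you assert without justification, namely $\Inn(p_l)\subseteq\{k:k\leq s(p_l)\}$ for a right-minimal direction-preserving path, is precisely what the paper's base case $L=1$ verifies (if an inner vertex $k$ had $k>s(p_l)$, splitting $p_l$ there writes $p_l+I$ as a product of an element of $\rad(A)$ with a non-trivial direction-preserving path in $\rad(B_{\max})$, contradicting $p_l+I\notin\rad(A)\rad(B_{\max})$), so that one-line argument should be included.
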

\begin{proof}
    We proceed by induction. If $p+I$ is a right-minimal direction-preserving path, then $s(p)<t(p)$, since $p+I$ is direction-preserving, and $s(p)>k$ for all interior vertices $k$ of $p$, since otherwise, $(p+I)=(p'+I)(p''+I)$, where $p'$ and $p''$ are non-trivial with $t(p'')=s(p')=k>s(p)$, so that $p''+I\in \rad(B_{\max})$ and $p'+I\in \rad(A)$.\\
    Now, suppose $p+I=\prod_{i=1}^L (p_l+I)$ for $L>1$ is a non-zero product of right-minimal direction-preserving paths. Let $p':=\prod_{i=1}^{L-1}p_l$. Then $p'+I=\prod_{i=1}^{L-1}(p_l+I)$ is a product of right-minimal direction-preserving paths and by induction hypothesis, $\max(p')=t(p')$ and $k<t(p')$ for any interior vertex $k$ of $p'$. On the other hand, $\max(p_L)=t(p_L)$, and $t(p_L)=s(p')<t(p')$, so that $k<t(p')$ for every $k\in V_{p_L}$. Since any interior vertex of $p$ is an interior vertex of $p'$ or a vertex of $p_L$, this implies that $k<t(p)$ for any interior vertex $k$ of $p$.
\end{proof}
In fact, the non-trivial paths in $B_{\min}$ are exactly the non-trivial paths in $A$ where the last vertex is bigger than all other ones that the path passes through. The reason is that, given a non-trivial path $p+I$ where the last vertex is bigger than all other vertices which $p$ passes through, we can cut $p+I$ into right-minimal direction preserving paths by inductively inserting a new cut-off point at the next vertex in $p$ which is bigger than the vertex at the previous cut-off point:
\begin{center}
\begin{tikzpicture}
    \draw[thick,->] (0,0) -- (8.5,0) node[anchor=north west] {$l(p)$};
\draw[thick,->] (0,0) -- (0,3.5) node[anchor=south east] {vertices};
\draw[thick] (0,1) node[anchor= south east] {$s(p)=i_0$} -- (1,0.5)  -- (2,1.5) node[anchor = south] {$i_1$} -- (3, 1.5) -- (4, 2) node[anchor = south] {$i_2$} -- (5, 0.5) -- (6, 2.5) node[anchor = south] {$i_3$} --  (7, 3) node[anchor = south west] {$t(p)=i_4$};
\filldraw (0,1) circle (1pt);
\filldraw[red] (2,1.5) circle (1pt);
\filldraw[red] (4, 2) circle (1pt);
\filldraw[red] (6,2.5) circle (1pt);
\filldraw[red] (7,3) circle (1pt);
\end{tikzpicture}
\end{center}
This gives rise to right-minimal direction preserving paths between any subsequent cut-off points, the product of which will be $p+I$. Note that last cut-off point will always be the terminal vertex, since it is bigger than any preceding vertex in $p$.\\
Let us give a more formal proof of this claim.
\begin{lemma}\label{lemma_bijection}
    Let $p+I$ be a non-trivial path in $A$ with $t(p)=i$. Then $p+I$ can be written as a product of right-minimal direction-preserving paths in $A$ if and only if $k< i$ for any interior vertex of $p$ and $p$ is directed.
\end{lemma}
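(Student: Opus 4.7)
The forward direction $(\Rightarrow)$ follows immediately from Lemma~\ref{lemma_product}: if $p+I=(p_1+I)(p_2+I)\cdots(p_L+I)$ is a non-zero product of right-minimal direction-preserving paths, then Lemma~\ref{lemma_product} gives $\max(p)=t(p)=i$, forcing every inner vertex of $p$ to be strictly smaller than $i$, while the chain $s(p)=s(p_L)<t(p_L)=s(p_{L-1})<\cdots<t(p_1)=t(p)$ shows that $p$ is directed.

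For the converse $(\Leftarrow)$ I would build the desired decomposition by the cut-point procedure suggested by the figure. Write $p$ as a composition of arrows traversing the vertices $v_0,v_1,\dots,v_n$ of $Q$, with $v_0=s(p)$ and $v_n=t(p)=i$. Define indices $0=j_0<j_1<\cdots$ recursively by letting $j_{k+1}$ be the least index $j>j_k$ with $v_j>v_{j_k}$. By hypothesis, $s(p)<i$ and every interior vertex of $p$ is strictly below $i$, so whenever $j_k<n$ we have $v_{j_k}<i=v_n$ and hence $j_{k+1}$ is well defined. The values $v_{j_0}<v_{j_1}<\cdots$ form a strictly increasing sequence bounded above by $i$, so the procedure terminates at some step $j_m=n$.

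For $k=1,\dots,m$, let $q_k$ denote the subpath of $p$ running from $v_{j_{k-1}}$ to $v_{j_k}$, so that $p=q_mq_{m-1}\cdots q_1$ in $\field Q$. By construction $s(q_k)=v_{j_{k-1}}<v_{j_k}=t(q_k)$, and every inner vertex of $q_k$ is some $v_j$ with $j_{k-1}<j<j_k$, which by the minimality of $j_k$ satisfies $v_j\leq v_{j_{k-1}}=s(q_k)$. Hence $q_k$ is direction-preserving with no inner vertex strictly above its source; as in the proof of Lemma~\ref{lemma_product}, this is exactly the condition characterising right-minimal direction-preserving paths, since a factorisation $q=q'q''$ with $s(q')=t(q'')=k$ places $q$ into $\rad(A)\rad(B_{\max})$ precisely when $k>s(q)$. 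Thus each $q_k+I$ is right-minimal direction-preserving and their product equals $p+I$ in $A$. The only step requiring any real care is making sure the algorithm does not stall before reaching $v_n$, which is where both parts of the hypothesis --- $s(p)<i$ and all interior vertices of $p$ strictly below $i$ --- are used; the remainder is bookkeeping.
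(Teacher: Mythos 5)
Your proposal is correct and follows essentially the same route as the paper: the forward direction is the same appeal to Lemma~\ref{lemma_product}, and your cut-point decomposition is exactly the factorisation the paper constructs (its formal proof peels off the initial right-minimal segment and recurses on the remainder, which produces precisely your cut points $j_0<j_1<\cdots<j_m=n$). If anything, you are slightly more explicit than the paper about why the procedure cannot stall before reaching $t(p)$ and why each segment is right-minimal, which is welcome but not a different argument.
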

\begin{proof}
    Suppose first that $p+I$ can be written as a product of right-minimal direction-preserving paths $p+I=\prod_{l=1}^k (p_l+I)=\prod_{l=1}^k p_l+I$ in $A$. Then $p$ is clearly direction-preserving, and by Lemma \ref{lemma_product}, $k< i$ for any interior vertex of $p$.\\
    On the other hand, suppose $p+I$ is a non-trivial direction-preserving path in $A$ with $k<i=t(p)$ for all interior vertices of $p$. Since $p+I$ is direction-preserving, it can be written as $(p+I)=(q_1+I)(p_1+I)$ where $p_1+I$ is a right-minimal direction-preserving subpath of $p+I$, and $q_1+I$ is a subpath of $p+I$. If $q_1$ is trivial, we are done. Otherwise, since $s(q_1)$ is an interior vertex of $p$, $s(q_1)< i=t(q_1)$, so that $q_1+I$ is direction-preserving. Thus it can be written as $q_1+I=(q_2+I)(p_2+I)$ where $p_2+I$ is a right-minimal direction-preserving subpath of $q_1+I$, and $q_2+I$ is a subpath of $q_1+I$. We can proceed by induction to see that we can write $q+I=\Pi_{i=1}^k (p_k+I)$ for some right-minimal direction-preserving paths $p_1+I, \dots, p_k+I$. 
\end{proof}
We are now ready to prove our main theorem:
\begin{theorem}\label{thm_borel_hereditary}
    Let $(A, \leq_A)$ be a basic quasi-hereditary monomial algebra. Then $B_{\min}$ is an exact Borel subalgebra of $A$ with a basis given by paths.
\end{theorem}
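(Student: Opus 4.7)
The plan is to verify the four defining conditions for $B_{\min}$ to be an exact Borel subalgebra, handling conditions (2)--(4) by direct arguments and concentrating effort on condition (1). Since $L \subseteq B_{\min}$, the trivial paths $e_i^A$ form a complete system of primitive orthogonal idempotents in $B_{\min}$, which yields the bijection $\phi\colon [L_i^{B_{\min}}] \mapsto [L_i^A]$. A direct path-computation in the basic monomial algebra $B_{\min}$ gives $\End_{B_{\min}}(L_i^{B_{\min}}) = \field = \End_A(L_i^A)$ (condition (3)). By Lemma \ref{lemma_bijection}, the non-trivial paths in $B_{\min}$ are precisely the direction-preserving paths $p$ whose interior vertices lie strictly below $t(p)$; in particular each such path is direction-preserving, so $\rad(B_{\min})$ is spanned by direction-preserving paths and $B_{\min}$ is directed (condition (4)). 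Together with the trivial paths, these paths form a basis of $B_{\min}$.

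For condition (2), right-exactness gives
\[
A \otimes_{B_{\min}} L_i^{B_{\min}} \;\cong\; A e_i^A / A\,\rad(B_{\min})\, e_i^A,
\]
and it suffices to identify the denominator with the submodule $N_i$ of $P_i^A$ generated by paths from $i$ to any $j > i$, since $\Delta_i^A = P_i^A / N_i$. The inclusion $N_i \supseteq A\,\rad(B_{\min})\, e_i^A$ is immediate because every non-trivial path in $B_{\min}$ based at $i$ ends at a vertex larger than $i$. Conversely, given a path $p$ from $i$ to some $j > i$, I would traverse $p$ from the source until the first vertex $s > i$ is reached and factor $p = p_1 q$ with $q\colon i \to s$; by minimality of $s$, the subpath $q$ is right-minimal direction-preserving, hence $q \in \rad(B_{\min})$ and $p \in A\,\rad(B_{\min})\, e_i^A$.

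The central step is condition (1), exactness of $A \otimes_{B_{\min}} -$. I propose to establish it by constructing a canonical decomposition $A \cong C \otimes_L B_{\min}$ as right $B_{\min}$-modules, where $C \subseteq A$ is the $\field$-span of those paths $c$ satisfying $s(c) \geq k$ for every $k \in V_c$. For each non-trivial path $p$ with successive vertices $v_0, \ldots, v_L$, let $k$ be the largest index with $k = 0$ or $v_k > v_j$ for all $0 \leq j < k$; setting $b\colon v_0 \to v_k$ and $c\colon v_k \to v_L$, Lemma \ref{lemma_bijection} puts $b$ in $B_{\min}$, while maximality of $k$ implies $v_k \geq v_j$ for all $k \leq j \leq L$ (otherwise the smallest $j > k$ with $v_j > v_k$ would itself satisfy the criterion, contradicting maximality), so $c \in C$. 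Conversely, any product $cb$ with $c \in C$, $b \in B_{\min}$ and $t(b) = s(c)$ splits back to $(c, b)$ under the same construction, giving a bijection between paths in $A$ and such pairs. The multiplication map $C \otimes_L B_{\min} \to A$ is thus a right $B_{\min}$-module isomorphism; since $L$ is semisimple, $C$ is projective as a right $L$-module, hence $A$ is projective as a right $B_{\min}$-module, and $A \otimes_{B_{\min}} -$ is exact.

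The main obstacle is the decomposition step, and specifically the verification that the maximality of the split index $k$ forces $c \in C$: this is a short but essential inductive argument about the interplay between the peak structure of $p$ and the defining criterion for $B_{\min}$. Once this is in place, the remaining verifications reduce to essentially bookkeeping on paths using Lemmas \ref{lemma_product} and \ref{lemma_bijection}.
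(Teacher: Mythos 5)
Your route is genuinely different from the paper's: the paper reduces Theorem \ref{thm_borel_hereditary} via the first implication of König's Theorem A to checking that the restrictions $\nabla_i^A|_{B_{\min}}$ are the costandard modules of $B_{\min}$, whereas you verify the four axioms directly and, for exactness of induction, try to build the Reedy decomposition $C\otimes_L B_{\min}\cong A$ by hand (the paper only obtains this decomposition afterwards, in Theorem \ref{thm_reedy}, as a consequence of Theorem \ref{thm_borel_hereditary} applied to $A$ and $A^{\op}$). Your treatment of conditions (2), (3) and (4) is correct: the identification $A\otimes_{B_{\min}}L_i^{B_{\min}}\cong Ae_i^A/A\rad(B_{\min})e_i^A$ together with the ``first vertex above $i$'' factorisation does show that $A\rad(B_{\min})e_i^A$ equals the kernel of $P_i^A\rightarrow\Delta_i^A$, and the splitting of a path at its last strict running maximum correctly produces a pair $(c,b)$ with $b\in B_{\min}$ and $c\in C$.

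The gap is in the injectivity of the multiplication map $C\otimes_L B_{\min}\rightarrow A$. Your splitting construction shows that every path of $A$ arises as $cb$ for exactly one pair of basis paths $(c,b)$ with $t(b)=s(c)$, i.e.\ the map is surjective and the \emph{nonzero} products of basis elements are pairwise distinct. But a basis of $C\otimes_L B_{\min}$ is indexed by \emph{all} pairs $(c,b)$ with $t(b)=s(c)$, and a pair with $cb\in I$ gives a basis element in the kernel; then $A$ is only a proper quotient of the projective right $B_{\min}$-module $C\otimes_L B_{\min}$ and projectivity of $A_{B_{\min}}$ does not follow. Ruling out such pairs is exactly where quasi-heredity of $(A,\leq_A)$ must enter --- note that your argument never invokes it --- and the statement is false without it: for $Q\colon 1\xrightarrow{\alpha}3\xrightarrow{\beta}2$ with $I=(\beta\alpha)$ one has $\alpha\in B_{\min}$, $\beta\in C$, $\beta\alpha=0$, the algebra is not quasi-hereditary, and $A=B_{\min}\oplus\langle\beta+I\rangle$ is not projective over $B_{\min}$. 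Indeed, by the remark following Theorem \ref{thm_reedy}, bijectivity of the multiplication map is essentially \emph{equivalent} to quasi-heredity (Green--Schroll), so proving ``$c\in C$, $b\in B_{\min}$, $t(b)=s(c)$ $\Rightarrow$ $cb\notin I$'' from the hypothesis requires a substantive argument (e.g.\ a dimension count using the $\Delta$-filtration of $P_i$ and BGG reciprocity, or the paper's costandard-module computation, where quasi-heredity enters through $[\nabla_i^A:L_i^A]=1$). As it stands this step is asserted, not proved, and it is the crux of the theorem.
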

\begin{proof}
    Clearly, $B:=B_{\min}$ is a directed subalgebra of $A$ and has a basis given by paths.\\ Since $\End_B(L_i^A)\cong \field \cong \End_B(L_j^B)$ for all $1\leq i, j\leq n$, the proof of the first implication in \cite[Theorem A]{Koenig} applies word for word in our situation, despite the fact that we do not assume that $\field$ is algebraically closed. Thus, it suffices to show that the restrictions of the costandard modules of $A$ from left $A$-modules to left $B$-modules are isomorphic to the costandard modules of $B$.\\
    Let us choose the basis of $A=\field Q/I$ given by all paths in $A$, i.e. by all elements $p+I$, where $p$ is a path in $Q$ which is not in $I$, and the corresponding dual basis of $A^*$.
    Then the injective $A$-module $I_i=(e_i^A A)^{*}$ has a $\field$-basis given by  the restrictions to $e_i^AA$ of the dual of the trivial path $(e_i+I)^*$ and the duals $(p+I)^*$ of all non-trivial paths $p+I$ in $A$ such that $t(p)=i=\max(p)$, with left $A$-module structure given by
    \begin{align*}
         x\cdot (p+I)^*_{|e_i^A A}:e_i^A A\rightarrow \field, y\mapsto (p+I)^*(yx)
    \end{align*}
    for $x\in A$.
    Hence, the corresponding costandard module  $\nabla_i^A=\eta_{\leq i}I_i$  of $A$ has a $\field$-basis given by the the restrictions to $e_i^AA$ of the dual of the trivial path $(e_i+I)^*$ and the duals $(p+I)^*$ of all non-trivial paths $p+I$ in $A$ such that $t(p)=i=\max(p)$, and a left $A$-module structure given as above.
    In particular, there is a bijection between the simple composition factors of $\nabla_i^A$ with multiplicities and the paths $p+I$ with  $t(p)=i=\max(p)$ given by 
    \begin{align*}
        p+I\mapsto L_{s(p)}.
    \end{align*}
    Since $A$ is quasi-hereditary, the simple with index $i$ appears exactly once as a composition factor of $\nabla_i^A$. This implies that for all non-trivial paths $p+I$ in $A$ such that $t(p)=i=\max(p)$, we have that $s(p)<t(p)$. Additionally, for every interior vertex $k$ of $p$, we can consider a subpath $p'$ of $p$ starting at $k$ and ending at $t(p)$ to see that this also implies $k=s(p')<t(p')=t(p).$\\
    On the other hand, since $B$ is directed, the costandard module  $\nabla_i^B=I_i^B=(e_i^A B)^*$  of $B$  is just the injective hull of $L_i^B$, and thus it has a $\field$-basis given by the restriction to $e_i^AB$ of the dual of the trivial path $(e_i+I)^*$ and the duals $(p+I)^*$ of all non-trivial paths $p+I$ in $A$ with $t(p)=i$ which can be written as a product of right-minimal direction-preserving paths, with left $B$-module structure given by 
    \begin{align*}
         x\cdot (p+I)^*_{|e_i^A B}:e_i^A B\rightarrow \field, y\mapsto (p+I)^*(yx)
    \end{align*}
    for $x\in B$.
    By Lemma \ref{lemma_bijection}, there is a bijection between these two bases, which gives rise to  an isomorphism of vector spaces
    \begin{align*}
        \nabla_i^A\rightarrow \nabla_i^B, (p+I)^*_{|e_i^AA}\mapsto (p+I)^*_{|e_i^AB}.
    \end{align*}
    Clearly, this is an isomorphism of left $B$-modules, so that $\nabla_i^A\cong \nabla_i^B$ as left $B$-modules.
\end{proof}
We now obtain an exact Borel subalgebra $B_{\min}$ for any quasi-hereditary monomial algebra $(A, \leq_A)$. In a discussion with  Steffen König and Teresa Conde, we observed that passing from $A$ to $A^{\op}$, we also obtain a $\Delta$-subalgebra of $(A, \leq_A)$, and that one can therefore conclude the following:
\begin{theorem}\label{thm_reedy}
    Let $(A, \leq_A)$ be a basic quasi-hereditary monomial algebra. Let $C_{\min}$ be the exact Borel subalgebra of $(A^{\op}, \leq_A)$ generated by right-minimal direction preserving paths. Then $C_{\min}^{\op}$ is a $\Delta$-subalgebra of $A$. Moreover, there is a Reedy decomposition
    \begin{align*}
        C_{\min}^{\op}\otimes_L B_{\min}\rightarrow A, c\otimes b\mapsto cb.
    \end{align*}
\end{theorem}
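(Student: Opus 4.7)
The plan is to assemble the three hypotheses of König's criterion \cite[Theorem 4.1]{Koenig2}, which guarantees a Reedy decomposition whenever a basic quasi-hereditary algebra has an exact Borel subalgebra $B$ and a $\Delta$-subalgebra $C$ with $B\cap C$ a maximal semisimple subalgebra. Since $B_{\min}$ is already an exact Borel subalgebra by Theorem \ref{thm_borel_hereditary}, what remains is to produce the $\Delta$-subalgebra and verify the intersection condition.

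First I would observe that $A^{\op}=\field Q^{\op}/I^{\op}$ is again a basic monomial algebra, and that $(A^{\op},\leq_A)$ is quasi-hereditary since $(A,\leq_A)$ is, as recalled in Section \ref{section2}. Applying Theorem \ref{thm_borel_hereditary} to $A^{\op}$ then shows that $C_{\min}$ is an exact Borel subalgebra of $(A^{\op},\leq_A)$ with a basis consisting of paths. By König's theorem cited above, $C_{\min}^{\op}$ is therefore a $\Delta$-subalgebra of $(A,\leq_A)$.

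Next I would identify the paths forming a basis of $C_{\min}^{\op}$, regarded as a subspace of $A$. A path in $A^{\op}$ with source $i$ and target $j$ is the same combinatorial datum as a path in $A$ with source $j$ and target $i$. Applying Lemma \ref{lemma_bijection} inside $A^{\op}$ and translating back to $A$, the non-trivial basis paths of $C_{\min}^{\op}\subseteq A$ are exactly the non-trivial paths $p$ in $A$ satisfying $s(p)>t(p)$ and $k<s(p)$ for every interior vertex $k$ of $p$. Comparing with Lemma \ref{lemma_bijection} itself, the non-trivial basis paths of $B_{\min}$ are those with $s(p)<t(p)$ and $k<t(p)$ for every interior vertex $k$. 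Since a non-trivial path cannot simultaneously satisfy $s(p)<t(p)$ and $s(p)>t(p)$, the two path bases meet only in the trivial paths. As both subalgebras contain every trivial path, this yields $B_{\min}\cap C_{\min}^{\op}=L$, and $L$ is a maximal semisimple subalgebra of $A$ by construction.

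With all hypotheses in place, \cite[Theorem 4.1]{Koenig2} immediately yields the Reedy decomposition $C_{\min}^{\op}\otimes_L B_{\min}\to A$, $c\otimes b\mapsto cb$. The only conceptual point requiring care is the translation between paths in $A$ and paths in $A^{\op}$, which swaps source and target and hence the roles of direction-preserving and direction-reversing paths; once this bookkeeping is correctly tracked, the argument is a short assembly of previously established results.
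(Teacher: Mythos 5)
Your proposal is correct and follows essentially the same route as the paper: apply Theorem \ref{thm_borel_hereditary} to $A^{\op}$, invoke König's duality between exact Borel subalgebras and $\Delta$-subalgebras to see that $C_{\min}^{\op}$ is a $\Delta$-subalgebra, use Lemma \ref{lemma_bijection} (transported to $A^{\op}$) to see that the path bases of $B_{\min}$ and $C_{\min}^{\op}$ meet only in the trivial paths, and then apply \cite[Theorem 4.1]{Koenig2}. The only nitpick is a citation slip: the statement that an exact Borel subalgebra of $(A^{\op},\leq_A)$ yields a $\Delta$-subalgebra of $(A,\leq_A)$ is \cite[Theorem B]{Koenig}, not the Reedy decomposition theorem you reference earlier in the paragraph.
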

\begin{proof}
    Since $C_{\min}$ is an exact Borel subalgebra of $(A^{\op}, \leq_A)$ by Theorem \ref{thm_borel_hereditary}, \cite[Theorem B]{Koenig} implies that  $C_{\min}^{\op}$ is a $\Delta$-subalgebra of $A$. Moreover, by Lemma \ref{lemma_bijection}, $C_{\min}^{\op}$ is spanned, as a vector space, by $L$ and all paths $p+I$ in $A$ such that $s(p)=\max(p)>k$ for $k=t(p)$ as well as for all $k\in \Inn(p)$, and $B_{\min}$ is spanned by $L$ and all paths $p+I$ in $A$ such that $t(p)=\max(p)>k$ for $k=s(p)$ as well as for all $k\in \Inn(p)$. Hence it is clear that $C_{\min}^{\op}\cap B_{\min}=L$, so that by \cite[Theorem 4.1]{Koenig2}, we obtain the Reedy decomposition from above.
    \end{proof}
    In particular, if $(A, \leq_A)$ is quasi-hereditary, then any path in $A$ can be written as a product of a path in $C$ and a path in $B$. Therefore, we obtain the following corollary, which gives an even easier description of the paths in $B_{\min}$:
\begin{corollary}\label{corollary_span}
    Suppose $(A, \leq_A)$ is quasi-hereditary, and let $p+I$ be a  path in $A$. Then $\max(p)$ appears exactly once as a vertex of $p$, i.e. the only subpath $q$ of $p$ with $s(q)=\max(p)=t(q)$ is the trivial path at $\max(p)$.\\
    In particular,  if $\max(p)=t(p)$ for some non-trivial path $p+I$ in $A$, then $k<t(p)$ for all $k\in \Inn(p)$ and $s(p)<t(p)$. Thus, $B_{\min}$ is spanned, as a vector space, by all paths $p+I$ in $A$ such that $t(p)=\max(p)$.
\end{corollary}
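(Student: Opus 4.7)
The plan is to deduce the corollary from the Reedy decomposition of Theorem~\ref{thm_reedy}. Since $C_{\min}^{\op}\otimes_L B_{\min}\to A$, $c\otimes b\mapsto cb$, is an isomorphism and all three algebras have bases given by paths, every path $p+I$ in $A$ factors uniquely as $p+I=(c+I)(b+I)$ with $c+I$ a path in $C_{\min}^{\op}$ and $b+I$ a path in $B_{\min}$ (either factor possibly trivial). I would then locate $\max(p)$ in the vertex sequence of $p$ via this factorisation, using the characterisations recalled in the proof of Theorem~\ref{thm_reedy}: a non-trivial path in $C_{\min}^{\op}$ has $s(c)=\max(c)$ and this vertex occurs in $c$ only at the source, while a non-trivial path in $B_{\min}$ has $t(b)=\max(b)$ and this vertex occurs in $b$ only at the target.

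The key observation is that the junction vertex $v:=s(c)=t(b)$ of the factorisation always equals $\max(p)$: by the characterisations one has $\max(c)=v$ and $\max(b)=v$ (trivially when $c$ or $b$ is trivial, and otherwise by the location of the unique maximum), hence $\max(p)=\max(\max(c),\max(b))=v$. The vertex sequence of $p$ is the concatenation of those of $c$ and $b$ at this junction, and in it $v$ occurs in the $c$-part only at the source and in the $b$-part only at the target, both of which are identified with the junction position. Hence $v=\max(p)$ appears exactly once as a vertex of $p$; this proves the first assertion of the corollary.

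The remaining statements follow immediately. If $\max(p)=t(p)$ for a non-trivial $p$, then every other vertex of $p$ is at most $\max(p)$ and, by the uniqueness just established, strictly smaller than $t(p)$, giving $s(p)<t(p)$ and $k<t(p)$ for all $k\in\Inn(p)$. Combining this with Lemma~\ref{lemma_bijection}, which describes the non-trivial paths in $B_{\min}$ as precisely the directed paths with all interior vertices strictly below $t(p)$, yields that $B_{\min}$ is spanned as a vector space by the paths $p+I$ with $t(p)=\max(p)$. I do not see a real obstacle; the only bookkeeping to do carefully is verifying that the junction equals $\max(p)$ and is attained exactly once in $p$, both of which follow directly from the $C_{\min}^{\op}$- and $B_{\min}$-characterisations of Theorem~\ref{thm_reedy}.
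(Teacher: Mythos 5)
Your proposal is correct and is essentially the paper's own argument: the corollary is deduced from the Reedy decomposition of Theorem \ref{thm_reedy} by factoring each path in $A$ uniquely as a $C_{\min}^{\op}$-path times a $B_{\min}$-path and observing that the junction vertex is the unique occurrence of $\max(p)$. The bookkeeping you flag (junction equals $\max(p)$, attained once) is handled correctly via the characterisations of paths in $C_{\min}^{\op}$ and $B_{\min}$ from Lemma \ref{lemma_bijection}.
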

    \begin{remark}
        Let $(A=\field Q/I, \leq_A)$ be a monomial algebra equipped with a partial order on its set of vertices, and suppose that $Q$ contains no directed cycles. Then the above Corollary is statisfied a priori, and in this case, it can be easily seen that \cite[Theorem 3.10]{schroll} can be reformulated to state that $(A, \leq_A)$ is quasi-hereditary if and only if the map 
        \begin{align*}
        C_{\min}^{\op}\otimes_L B_{\min}\rightarrow A, c\otimes b\mapsto cb
    \end{align*}
    is an isomorphism. Since Reedy algebras are quasi-hereditary by \cite[Theorem 4.20]{DaleziosStovicek}, this gives an alternative proof of our main theorem in the case that $Q$ contains no directed cycles.
    \end{remark}
Using the $\Delta$-subalgebra $C_{\min}^{\op}$, we obtain an explicit way 
of writing $A$ as a direct sum of projective $B_{\min}$-modules. 
\begin{corollary}\label{proposition_decomposition}
    Let $(A=\field Q/I, \leq_A)$ be a basic quasi-hereditary monomial algebra and let $B_{\min}$ be the exact Borel subalgebra of $A$ generated by the right-minimal direction-preserving paths. Let $\mathcal{P}$ be the set of all paths $q+I$ in $A$ such that $s(q)=\max(q)$, including the trivial paths. Then $A\cong \bigoplus_{q+I\in \mathcal{P}}(q+I)B_{\min}$ as right $B_{\min}$-modules, where $(q+I)B_{\min}\cong e_{s(q)}^A B_{\min}$ for all $p+I\in \mathcal{P}$.
\end{corollary}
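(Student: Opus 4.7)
The plan is to deduce the decomposition directly from the Reedy decomposition of Theorem \ref{thm_reedy}. First, I would recall from the proof of that theorem that $C_{\min}^{\op}$ is spanned, as a vector space, by $L$ together with all paths $q+I$ in $A$ satisfying $s(q) = \max(q) > k$ for $k = t(q)$ and for every $k \in \Inn(q)$. Combined with the trivial paths coming from $L$, this is precisely the set $\mathcal{P}$; that is, $\mathcal{P}$ provides a $\field$-basis of $C_{\min}^{\op}$.

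Next, I would observe that this basis is compatible with the right $L$-action: for each $q+I \in \mathcal{P}$, the one-dimensional subspace $\field(q+I)$ is a right $L$-submodule of $C_{\min}^{\op}$, killed by every $e_k^A$ with $k \neq s(q)$ and fixed by $e_{s(q)}^A$. Hence as a right $L$-module,
\begin{align*}
    C_{\min}^{\op} = \bigoplus_{q+I \in \mathcal{P}} \field(q+I), \qquad \field(q+I) \cong e_{s(q)}^A L.
\end{align*}

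Tensoring over $L$ with $B_{\min}$ commutes with direct sums, and the canonical map
\begin{align*}
    \field(q+I) \otimes_L B_{\min} \to e_{s(q)}^A B_{\min}, \quad (q+I) \otimes b \mapsto e_{s(q)}^A b,
\end{align*}
is an isomorphism of right $B_{\min}$-modules. Combining these with Theorem \ref{thm_reedy}, I obtain right $B_{\min}$-module isomorphisms
\begin{align*}
    A \cong C_{\min}^{\op} \otimes_L B_{\min} \cong \bigoplus_{q+I \in \mathcal{P}} \field(q+I) \otimes_L B_{\min} \cong \bigoplus_{q+I \in \mathcal{P}} e_{s(q)}^A B_{\min}.
\end{align*}
Tracing the Reedy isomorphism $c \otimes b \mapsto cb$, the summand indexed by $q+I$ is carried onto $(q+I)B_{\min} \subseteq A$, which yields the claim that $A = \bigoplus_{q+I \in \mathcal{P}}(q+I)B_{\min}$ with $(q+I)B_{\min} \cong e_{s(q)}^A B_{\min}$.

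There is no real obstacle beyond bookkeeping; the only point requiring care is verifying that $\mathcal{P}$ really indexes a right $L$-module decomposition of $C_{\min}^{\op}$, which boils down to the observation that each basis path $q+I$ is of the form $e_{t(q)}^A (q+I) e_{s(q)}^A$, so that it spans a rank-one right $L$-submodule. Everything else is formal from the Reedy decomposition and the distributivity of $-\otimes_L B_{\min}$ over direct sums.
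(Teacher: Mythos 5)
Your proposal is correct and follows essentially the same route as the paper: identify $\mathcal{P}$ as a basis of $C_{\min}^{\op}$, decompose $C_{\min}^{\op}$ as a right $L$-module into one-dimensional summands, tensor with $B_{\min}$ over $L$, and transport along the Reedy isomorphism of Theorem \ref{thm_reedy}. The only cosmetic difference is that the paper invokes Corollary \ref{corollary_span} to identify the spanning set of $C_{\min}^{\op}$ with $\mathcal{P}$, whereas you identify the two sets directly; that identification is exactly the content of Corollary \ref{corollary_span}, so nothing is missing.
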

\begin{proof}
Let $C_{\min}$ be the exact Borel subalgebra of $(A^{\op}, \leq_A)$ generated by right-minimal direction-preserving paths. Then by Corollary \ref{corollary_span}, $C_{\min}^{\op}$ is spanned, as a vector space, by the paths in $\mathcal{P}$.  Since $L\cong \field^n$, this gives rise to a decomposition $C_{\min}^{\op}\cong \bigoplus_{p\in \mathcal{P}}\field p$ of $C_{\min}^{\op}$ as a projective right $L$-module, which in turn gives rise to a decomposition  $C_{\min}^{\op}\otimes_L B_{\min}\cong \bigoplus_{p\in \mathcal{P}}\field p\otimes_L B$ of $C_{\min}^{\op}\otimes_L B_{\min}$ as a projective right $B$-module. Using that multiplication in $A$ induces an isomorphism ${C_{\min}^{\op}\otimes_L B_{\min}\rightarrow A}$ of right $B_{\min}$-modules by \ref{thm_reedy}, we obtain the desired decomposition of $A$. 
\end{proof}
In \cite{Conde2}, it was shown that every exact Borel subalgebra is automatically normal. In the following, we use the above decomposition to exhibit the normality of $B_{\min}$ by giving a concrete splitting of the inclusion map $B_{\min}\rightarrow A$.
\begin{corollary}
    Let $(A, \leq_A)$ be a basic monomial quasi-hereditary algebra and let $B_{\min}$ be the exact Borel subalgebra of $A$ generated by the right-minimal direction-preserving paths. Let $\mathcal{P}$ be as in Corollary \ref{proposition_decomposition}. Then the map
    \begin{align*}
       \pi: A=\bigoplus_{q+I\in \mathcal{P}}(q+I)B_{\min}\rightarrow B_{\min}\\
        (q+I)b\mapsto 0\textup{ if }q+I\notin\{e_1^A, \dots, e_n^A\}\\
        (q+I)b\mapsto (q+I)b \textup{ if }q+I\in\{e_1^A, \dots, e_n^A\}.\\
    \end{align*}
    is a splitting of the canonical inclusion $B_{\min}\rightarrow A$ as right $B_{\min}$-modules and $\ker(\pi)$ is a right ideal in $A$
\end{corollary}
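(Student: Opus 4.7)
The plan is to observe that $\pi$ is essentially the projection from the direct sum decomposition of Corollary \ref{proposition_decomposition} onto the summands indexed by the trivial paths $\{e_1^A, \dots, e_n^A\} \subseteq \mathcal{P}$. This makes the right $B_{\min}$-linearity of $\pi$ automatic and reduces the splitting claim to noting that $B_{\min} = \bigoplus_{i=1}^n e_i^A B_{\min}$ is precisely the sum of these ``trivial-path'' summands; hence $\pi \circ \iota = \id_{B_{\min}}$, where $\iota: B_{\min} \hookrightarrow A$ denotes the inclusion.

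The substantive step will be showing that $\ker(\pi)$ is a right ideal in $A$. Under the decomposition above, $\ker(\pi) = \bigoplus_{q+I \in \mathcal{P},\ q \text{ non-trivial}} (q+I) B_{\min}$. Since every element of $\ker(\pi)$ has the form $(q+I) b$ with $q$ non-trivial in $\mathcal{P}$ and $b \in B_{\min} \subseteq A$, and since $A$ is spanned by its paths, closure of $\ker(\pi)$ under right multiplication by $A$ reduces to showing that $(q+I)(p+I) \in \ker(\pi)$ for every non-trivial $q+I \in \mathcal{P}$ and every path $p+I$ in $A$.

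The key observation driving the argument will be the following. Since $q$ is non-trivial and $s(q) = \max(q)$, Corollary \ref{corollary_span} forces $t(q) < s(q) = \max(q)$, because $\max(q)$ appears exactly once as a vertex of $q$. Consequently, for any non-zero product $qp+I$ one has $\max(qp) \geq \max(q) > t(q) = t(qp)$. Applying Corollary \ref{corollary_span} once more, I will split $qp$ at the unique occurrence of $\max(qp)$, obtaining $qp = r b'$, where $r$ is the sub-path of $qp$ from $\max(qp)$ to $t(qp)$ and $b'$ is the sub-path from $s(qp)$ to $\max(qp)$. Then $r+I$ lies in $\mathcal{P}$, is non-trivial since $t(qp) < \max(qp)$, and $b'+I \in B_{\min}$; so $qp + I \in (r+I) B_{\min} \subseteq \ker(\pi)$, as required.

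I do not expect a substantial obstacle: the Reedy decomposition from Theorem \ref{thm_reedy} together with the single-occurrence-of-max property from Corollary \ref{corollary_span} do all of the structural work, and the argument is largely a matter of unpacking. The only point requiring a small sanity check will be verifying that the ``$r$'' appearing in the factorization of $qp$ is indeed indexed by an element of $\mathcal{P}$ in the sense of Corollary \ref{proposition_decomposition}, which is however immediate from $s(r) = \max(qp) = \max(r)$.
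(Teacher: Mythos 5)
Your proposal is correct and follows essentially the same route as the paper: both reduce the right-ideal claim to a single path $qp+I$ with $q+I$ a non-trivial element of $\mathcal{P}$, and both rest on the characterization of the paths in $B_{\min}$ via Lemma \ref{lemma_product}/Corollary \ref{corollary_span}. The only cosmetic difference is that the paper argues by exclusion (showing $qp+I\notin B_{\min}$ and using that a path lies in exactly one summand of the decomposition from Corollary \ref{proposition_decomposition}), whereas you exhibit the containing summand $(r+I)B_{\min}\subseteq\ker(\pi)$ explicitly by splitting $qp$ at the unique occurrence of $\max(qp)$; both are valid.
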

\begin{proof}
    Note that $\ker(\pi)=\bigoplus_{q+I\in \mathcal{P}\setminus\{e_1^A, \dots, e_n^A\}}(q+I)B$. Let $q+I\in\mathcal{P}\setminus\{e_1^A, \dots, e_n^A\}$ and $p+I\in A$ be a path in $A$ such that $(q+I)(p+I)\neq 0$. Then, since the product $(q+I)(p+I)=qp+I$ is a path in $A$ and $B_{\min}$ has a basis given by paths, it suffices to show that $qp+I\notin \bigoplus_{i=1}^n e_i^A B_{\min}=B_{\min}$. Recall that by Lemma \ref{lemma_product}, if $qp+I\in B_{\min}$, then $s(qp)<t(qp)$ and $k<t(qp)$ for any interior vertex of $qp$. However, by assumption $t(qp)=t(q)\leq s(q)=\max(q)$, a contradiction. Hence $qp+I\notin B_{\min}$, so that $\ker(\pi)$ is a right ideal in $A$.
\end{proof}
In general, the exact Borel subalgebra $B_{\min}$ is not regular, on the contrary, there exist path algebras with orders that don't admit any regular exact Borel subalgebras, see for example \cite[Theorem 60]{Markus}. In fact, if $\field$ is algebraically closed, then by \cite[Theorem 8.4]{uniqueness}, the exact Borel subalgebra $B_{\min}$ is regular if and only if $\field Q$ admits a regular exact Borel subalgebra, which can be checked using \cite[Theorem D]{Conde}. We will further investigate the question of regularity in Section \ref{section5}, under the assumption that $\field$ is algebraically closed.
\begin{example}\label{counterexample}
 The following example shows how one can use the above theorem to find an exact Borel subalgebra of $\field Q$. Moreover, it illustrates that this subalgebra is not regular in general, as well as that $\field Q$ may admit other exact Borel subalgebras containing $L=\bigoplus_{i\in V_Q}\field e_i$ which are not generated by paths. Additionally, it provides a counterexample for \cite[Theorem 2.1]{yuehui_zhang2} and \cite[Theorem 3.6]{yuehui_zhang}.
    Let $Q$ be the quiver 
    \begin{center}
\begin{tikzcd}[ampersand replacement=\&]
	1 \& 3 \& 2
	\arrow["\alpha", from=1-1, to=1-2]
	\arrow["\gamma", curve={height=-18pt}, from=1-1, to=1-3]
	\arrow["\beta", from=1-2, to=1-3]
\end{tikzcd} 
  \end{center}
    and $A=\field Q$. Then the direction-preserving paths in $Q$ are $\alpha, \gamma$ and $\beta\alpha$, of which $\alpha$ and $\gamma$ are right-minimal direction-preserving. Hence $B=\field Q'$ with $Q'$ given by the quiver
    \begin{center}
\begin{tikzcd}[ampersand replacement=\&]
	1 \& 3 \& 2
	\arrow["\alpha", from=1-1, to=1-2]
	\arrow["\gamma", curve={height=-18pt}, from=1-1, to=1-3]
\end{tikzcd}
 \end{center}
is an exact Borel subalgebra of $(A, \leq_A)$. And indeed, as right $B$-modules
\begin{align*}
    A=B\oplus \beta B\cong B\oplus e_3B,
\end{align*}
and
\begin{align*}
    A\otimes_B L_1^B\cong Ae_1/A\rad(B)e_1=Ae_1/\langle \alpha, \gamma, \beta\alpha\rangle\cong L_1^A\cong \Delta_1^A\\
    A\otimes_B L_2^B\cong Ae_2/A\rad(B)e_2=Ae_2=P_2^A\cong\Delta_2^A\\
     A\otimes_B L_3^B\cong Ae_3/A\rad(B)e_3=Ae_3=P_3^A\cong\Delta_3^A.
\end{align*}
Note that the subalgebra $B'$ which is spanned as a $\field$-vector space by the set\\ $\{e_1, e_2, e_3, \alpha, \gamma+\beta\alpha\}$ is also an exact Borel subalgebra such that $B\neq B'$ and $B'$ and $B$ contain a common maximal semisimple subalgebra $L:=\bigoplus_{i=1}^3 \field e_i$ of $A$. In fact, as we will see in the following, $B$ and $B'$ are not conjugated to each other, although there clearly is an automorphism of $A$ mapping one to the other. To see this, note that $\gamma \notin B'$ and for every invertible element $a\in A$ we can write \begin{align*}
    a=c_1e_1+c_2e_2+c_3e_3+c_{\alpha}\alpha+c_{\beta}\beta+c_{\gamma}\gamma+c_{\beta\alpha}\beta\alpha,
\end{align*}
and
 \begin{align*}
    a^{-1}=c_1'e_1+c_2'e_2+c_3'e_3+c_{\alpha}'\alpha+c_{\beta}'\beta+c_{\gamma}'\gamma+c_{\beta\alpha}'\beta\alpha,
\end{align*}
for some  $c_{*}, c'_{*}\in \field$ for $*\in \{1,2,3,\alpha, \beta, \gamma, \beta\alpha\}$ such that $c_i\neq 0\neq c_i'$ for $1\leq i\leq 3$, and obtain that
\begin{align*}
   a\gamma a^{-1}=c_1'c_3\gamma,
\end{align*}
so that $a\gamma a^{-1}\in \field \gamma\setminus \{0\}$ and hence $a\gamma a^{-1}\notin B$.\\
Note that $B$ is not regular: $\Ext^1_A(\Delta_1^A, \Delta_3^A)=\Ext^1_A(L_1^A, P_3^A)$ is two-dimensional, since it is spanned by the extension $E_1:=P_1/\langle \gamma\rangle$, whose Loewy diagram is 
\begin{center}
\begin{tikzcd}[ampersand replacement=\&]
	1 \\
	3 \\
	2
	\arrow["\alpha", from=1-1, to=2-1]
	\arrow["\beta", from=2-1, to=3-1]
\end{tikzcd}
\end{center}
and the extension $E_2:=(P_1\oplus P_3)/\langle (\beta\alpha, 0), (\alpha, 0), (\gamma, -\beta)\rangle$, whose Loewy diagram is
\begin{center}
\begin{tikzcd}[ampersand replacement=\&]
	1 \&\& 3 \\
	\& 2
	\arrow["\gamma", from=1-1, to=2-2]
	\arrow["\beta"', from=1-3, to=2-2]
\end{tikzcd}
\end{center}
However, $\Ext^1_B(L_1^B, L_3^B)$ is one-dimensional.
In particular, this does not contradict the uniqueness result in \cite[Theorem 8.4]{uniqueness}.
\end{example}
   
 \section{Uniqueness}\label{section4}
In the previous section, we showed that the subalgebra $B_{\min}$ of a quasi-hereditary monomial algebra $(A, \leq_A)$ generated by the right-minimal direction preserving paths is an exact Borel subalgebra.
In this subsection we show that any exact Borel subalgebra $B$ of $A$ which has a basis given by paths must be generated by the right-minimal direction-preserving paths. In particular, this shows that if $Q$ is a tree, then any exact Borel subalgebra of $A$ must be conjugated to $B_{\min}$.\\
One possible way to do this is to show that that the subalgebra $B_{\min}$ generated by right-minimal direction preserving paths is an exact Borel subalgebra of $B$ and to use this to show that $B=B_{\min}$. This can be done as in the second half of the proof of \cite[Theorem 8.6]{uniqueness}.\\
Here, we chose to give an alternative, more direct proof, where we first establish that if $A$ is a basic monomial quasi-hereditary algebra and $B$ is an exact Borel subalgebra of $A$, then $A$ decomposes as a right $B$-module as $A=\bigoplus_{p\in P}pB$ for some set $P$ of paths in $A$. Recall that in the case where $B=B_{\min}$, we have already seen such a decomposition in Corollary \ref{proposition_decomposition}. 
\begin{lemma}\label{lemma_rewrite}
    Let $B$ be a finite-dimensional algebra and let $e_1, \dots, e_n$ be a set of primitive orthogonal idempotents in $B$ such that $1_B=\sum_{i=1}^n e_i$. Let $X$ be a  projective right $B$-module and suppose we have a decomposition $X=P\oplus (x+y)B$ where $P$ is a projective right $B$-module and $x,y\in X$ such that $$(x+y)B=(x+y)e_iB\cong e_iB$$ for some $1\leq i\leq n$. Then we have a decomposition  $X=P\oplus xB$ or a decomposition $X=P\oplus yB$.
\end{lemma}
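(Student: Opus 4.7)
The plan is to reduce everything to a computation inside the local ring $e_iBe_i$, using the given identification $(x+y)B\cong e_iB$ and the fact that a primitive idempotent has a local endomorphism ring.

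Let $\pi:X\to(x+y)B$ be the projection along $P$, and set $a:=\pi(x)$, $b:=\pi(y)$; then $a+b=\pi(x+y)=x+y$. Fix an isomorphism $\theta:(x+y)B\xrightarrow{\sim}e_iB$ with $\theta(x+y)=e_i$, and set $\alpha:=\theta(a)$, $\beta:=\theta(b)$, so that $\alpha,\beta\in e_iB$ and $\alpha+\beta=e_i$. Using the specific form $(x+y)B=(x+y)e_iB$ of the hypothesis, one shows that $a$ and $b$ satisfy $a\cdot e_i=a$ and $b\cdot e_i=b$, and transporting through $\theta$ places $\alpha,\beta$ inside the subring $e_iBe_i$. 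Since $e_i$ is a primitive idempotent of $B$, the ring $e_iBe_i$ is local with identity $e_i$, and its non-units form a proper ideal; hence the relation $\alpha+\beta=e_i$ forces at least one of $\alpha,\beta$ to be a unit in $e_iBe_i$.

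Without loss of generality suppose $\alpha$ is a unit; I would then show $X=P\oplus xB$, and the symmetric argument will handle the other case and yield $X=P\oplus yB$. Concretely, consider the right $B$-linear map
\[
\phi:P\oplus e_iB\longrightarrow X,\qquad (p,e_ic)\mapsto p+xc,
\]
which is well-defined as a consequence of the hypothesis. Passing through the splitting $X\cong P\oplus e_iB$ induced by $\theta$, the map $\phi$ becomes a block upper-triangular endomorphism of $P\oplus e_iB$ with $\id_P$ in the top-left slot and the endomorphism of $e_iB$ given by left multiplication by $\alpha$ in the bottom-right slot. Because $\alpha\in e_iBe_i$ is a unit and $e_iBe_i\cong\End_B(e_iB)^{\op}$, this bottom-right map is invertible, and therefore so is the whole block-triangular map. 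Hence $\phi$ is an isomorphism, and from $\phi(P\oplus 0)=P$ and $\phi(0\oplus e_iB)=xB$ one reads off the desired decomposition $X=P\oplus xB$.

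I expect the main obstacle to be the second step of the second paragraph: one has to exploit the refined hypothesis $(x+y)B=(x+y)e_iB$, and not merely $(x+y)B\cong e_iB$, in order to force the elements $\alpha$ and $\beta$ to lie in the local subring $e_iBe_i$; without this, one only has $\alpha+\beta=e_i$ inside the larger $e_iB$, where the units-versus-non-units dichotomy afforded by locality is unavailable. Once this is secured, the rest is a routine block-triangular invertibility argument.
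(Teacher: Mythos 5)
Your overall strategy is the same as the paper's: decompose $x$ and $y$ along $X=P\oplus(x+y)B$, transport the two components lying in $(x+y)B$ over to $e_iB$, observe that they sum to $e_i$ so that one of them must be a unit, and conclude. Your packaging of the final step as a block-triangular automorphism of $P\oplus e_iB$, and your use of the locality of $e_iBe_i$ in place of the paper's decomposition $b_x=\lambda_xe_i+r_x$ with $\lambda_x\in\field$, are both fine; the latter is arguably more robust, since the paper's scalar splitting tacitly assumes the residue ring of $e_iBe_i$ is $\field$.

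However, the step you single out as the main obstacle is a genuine one, and it cannot be overcome from the stated hypothesis. The condition $(x+y)B=(x+y)e_iB$ constrains only the cyclic module generated by $x+y$; it says nothing about $x$ and $y$ individually, so $ae_i=a$ and $be_i=b$ do not follow. It does not even yield $(x+y)e_i=x+y$ (so the normalisation $\theta(x+y)=e_i$ is also unavailable): it only produces an idempotent $u\in e_iB$ with $uB=e_iB$ and $(x+y)u=x+y$, and $u$ need not equal $e_i$. In fact the lemma is false as literally stated: take $B$ the algebra of upper triangular $2\times 2$ matrices, $X=B$, $P=e_{22}B$, $x=1_B$ and $y=-e_{22}$, so that $x+y=e_{11}$ and all hypotheses hold with $i=1$, yet $xB=B$ meets $P$ nontrivially and $P+yB=P\neq X$. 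What is actually needed is the stronger condition $xe_i=x$ and $ye_i=y$; this holds in the only place the lemma is applied (Lemma \ref{lemma_paths}, where $x$ and $y$ are sums of paths emanating from the vertex $i$ and $e_i^A\in B$), and the paper's own proof uses it tacitly as well, both when it concludes $b_x+b_y=e_i$ rather than $b_x+b_y=u$ and when it writes $yB=ye_iB$ to bound $\dim yB$. Once $xe_i=x$ and $ye_i=y$ are added to the hypotheses, every step of your argument (the normalisation of $\theta$, the containment $\alpha,\beta\in e_iBe_i$, the well-definedness of $\phi$, and $\phi(0\oplus e_iB)=xB$) goes through and the proof is correct.
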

\begin{proof}
    Since $x, y\in X$ there are $p_x, p_y\in P$ and $b_x, b_y\in e_iB$ such that 
    \begin{align*}
        x=p_x+(x+y)b_x\\
        y=p_y+(x+y)b_y.
    \end{align*}
    Then we have
    \begin{align*}
        x+y=p_x+p_y+(x+y)(b_x+b_y).
    \end{align*}
    Since $X=P\oplus (x+y)B$, and $p_x+p_y\in P$, as well as $x+y, (x+y)(b_x+b_y)\in (x+y)B$, we obtain that $p_x+p_y=0$ and $(x+y)(b_x+b_y)=x+y$. Since $(x+y)B\cong e_iB$ the epimorphism $$e_iB\rightarrow (x+y)B, b\mapsto (x+y)b$$ is an isomorphism, so that  $(x+y)(b_x+b_y)=x+y$ implies $b_x+b_y=e_i$.
    \\
    Write $b_x=\lambda_x e_i+ r_x$ and $b_y=\lambda_y e_i +r_y$ for $\lambda_x, \lambda_y\in \field$ and $r_x, r_y\in e_i\rad(B)$. Then if $\lambda_x\neq 1$, then $1-\lambda_x e_i=(1-\lambda_x) e_i+ \sum_{j\neq i} e_j$ is invertible in $B$ with inverse $(1-\lambda_x)^{-1} e_i+ \sum_{j\neq i} e_j$, and as $r_x\in \rad(B)$, so is $1-\lambda_x e_i-r_x=1-b_x$. Therefore,
    \begin{align*}
        x(1-b_x)=p_x+yb_x\in P+ yB
    \end{align*}
    implies that $x\in P+yB$. Hence $X=P+yB$. Since the map $$e_iB\rightarrow yB=ye_iB, x\mapsto yx$$ is an epimorphism, the $\field$-dimension of $yB=ye_iB$ is at most that of $e_iB\cong (x+y)B$. This implies that $X=P\oplus yB$. On the other hand, if $\lambda_x=1$, then $\lambda_y=0$ since $b_x+b_y=e_i$. Thus,  $\lambda_y\neq 1$, so that similarly as before, $X=P\oplus xB$.
\end{proof}
\begin{lemma}\label{lemma_paths}
    Let $(A=\field Q/I, \leq_A)$ be a basic quasi-hereditary algebra and suppose $B$ is an exact Borel subalgebra of $A$ containing $L=\bigoplus_{i\in Q_0}\field e_i^A$. Then, $A$ decomposes as a right $B$-module
    \begin{align*}
        A=\bigoplus_{i=1}^N (p_i+I) B
    \end{align*}
    where $p_i$ is a path in $Q$, $N\in \mathbb{N}$ and $(p_i+I) B\cong e_{s(p_i)}^AB$ as right $B$-modules.
\end{lemma}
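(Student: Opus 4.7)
The plan is to first produce a decomposition of $A$ as a right $B$-module into cyclic summands, and then to refine these generators one at a time into paths using Lemma \ref{lemma_rewrite}. To begin, since the induction functor $A \otimes_B -$ is exact and $A$ is finite-dimensional, $A$ is flat, hence projective, as a right $B$-module. The hypothesis $L \subseteq B$ ensures that the primitive orthogonal idempotents $e_1^A, \dots, e_n^A$ lie in $B$, and they remain primitive there, since any further refinement in $B$ would also be a refinement in $A$ and $A$ is basic. Consequently the indecomposable projective right $B$-modules are precisely the $e_i^A B$, and so there exist an integer $N$, indices $i_1, \dots, i_N \in \{1, \dots, n\}$ and elements $x_k \in A e_{i_k}^A$ with
\[
A \;=\; \bigoplus_{k=1}^N x_k B, \qquad x_k B \cong e_{i_k}^A B.
\]

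Next, I would fix $k$ and write the standard representative of $x_k$ as $x_k = \sum_{j=1}^m \lambda_j (p_j + I)$, where the $p_j$ are pairwise distinct paths in $Q$ not lying in $I$, each satisfying $s(p_j) = i_k$, and $\lambda_j \in \field \setminus \{0\}$. The claim is that the summand $x_k B$ can be replaced by $(p + I) B$ for some path $p$, and I would prove it by induction on $m$. When $m = 1$, we have $x_k B = \lambda_1 (p_1 + I) B = (p_1 + I) B$ since $\lambda_1$ is a unit in $\field \subseteq B$. For $m > 1$, set $y := \lambda_1 (p_1 + I)$ and $z := \sum_{j \geq 2} \lambda_j (p_j + I)$ and apply Lemma \ref{lemma_rewrite} with $X := A$, $P := \bigoplus_{k' \neq k} x_{k'} B$, and $e_i := e_{i_k}^A$. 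The lemma then yields either $A = P \oplus y B = P \oplus (p_1 + I) B$, which is already of the required form, or $A = P \oplus z B$, in which case $z$ is a linear combination of strictly fewer paths than $x_k$ and the induction hypothesis finishes the job. Iterating this refinement over all $k$ produces the desired decomposition of $A$.

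The main thing to verify is that the hypotheses of Lemma \ref{lemma_rewrite} are met at each application, namely that $y, z \in A e_{i_k}^A$ and that $(y + z) B = (y + z) e_{i_k}^A B \cong e_{i_k}^A B$; both are immediate from $s(p_j) = i_k$ for every $j$ together with the relation $x_k B \cong e_{i_k}^A B$ coming from the initial decomposition. I do not anticipate a genuine obstacle here, since the whole argument is essentially a bookkeeping induction built on the replacement lemma just proved.
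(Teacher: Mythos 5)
Your proposal is correct and follows essentially the same route as the paper: deduce projectivity of $A$ as a right $B$-module from exactness of induction, decompose $A$ into cyclic summands $x_kB\cong e_{i_k}^AB$, and then peel off one path at a time from the standard representative of each generator by repeated application of Lemma \ref{lemma_rewrite}. Your version merely spells out a few details the paper leaves implicit (the scalars $\lambda_j$, the primitivity of the $e_i^A$ in $B$, and the verification of the hypotheses of Lemma \ref{lemma_rewrite}).
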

\begin{proof}
    By assumption, we have that $A$ is projective as a right $B$-module. Let $A=\bigoplus_{i=1}^N x_i B$, where $x_i\in A$, $N\in \mathbb{N}$ and $x_iB\cong e_i^A B$ as a right $B$-module.
    Let $x_1=\sum_{j=1}^m p_j+I$, where $p_1,\dots, p_m$ are paths in $Q$ which start in the vertex $i$.
    Set $P=\bigoplus_{i=2}^N x_i B$, $x=p_1+I$ and $y=\sum_{j=2}^m p_j+I$. Then we have a decomposition $A=P\oplus (x+y)B$. Thus, by the previous lemma, we also have a decomposition $A=P\oplus xB$ or $A=P\oplus yB$. Now we can proceed by induction.
\end{proof}
In the following, we fix such a decomposition of $A=\bigoplus_i^N (p_i+I)B$.\\
Note that for every $1\leq j\leq n$, $e_j\in A=\bigoplus_i^N (p_i+I)B$ so that there is $1\leq i\leq N$ such that $e_j\in (p_i+I)B$ where $p_i$ is a path in $Q$.
But this implies that $p_i+I=e_j^A$, so that $e_1^A,\dots, e_n^A\in \{p_i|1\leq i\leq N\}$. Rearranging if necessary, we can and will therefore assume from now on that $p_i=e_i$ for $1\leq i\leq n$, while $p_l$ is non-trivial for $l>n$.
Now let $X=\bigoplus_{i=n+1}^L (p_i+I)B$ so that $A=B\oplus X.$
\begin{proposition}\label{proposition_indirection-preserving}
    Let $p+I\in e_j^A Ae_i^A $ be a non-direction-preserving and non-trivial path. Then $p+I\in X$.
\end{proposition}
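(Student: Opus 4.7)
My plan is to use the right $B$-module decomposition $A = B \oplus X$ fixed just before the proposition to write $p + I = b + x$ with $b \in B$ and $x \in X$, and then to show $b = 0$. First I would note that since $e_j^A, e_i^A \in L \subseteq B$, both left multiplication by $e_j^A$ and right multiplication by $e_i^A$ preserve the direct sum $B \oplus X$; combined with $p + I \in e_j^A A e_i^A$, this forces $b \in e_j^A B e_i^A$.

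Next I would invoke Remark \ref{remark_borel_is_subset} to get $B \subseteq B_{\max}$. A short induction (if $p_1 : i \to k$ and $p_2 : k \to j$ are direction-preserving, then $i < k < j$, so any non-zero product $p_2 p_1$ is again direction-preserving) shows that $B_{\max}$ is spanned, as a vector space, by the trivial paths together with the direction-preserving paths. Consequently, $e_j^A B_{\max} e_i^A = 0$ whenever $i > j$, and $e_i^A B_{\max} e_i^A = \field e_i^A$ when $i = j$. In the case $i > j$ this immediately yields $b = 0$, hence $p + I = x \in X$.

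The main obstacle is the case $i = j$, where the above only gives $b \in \field e_i^A$ and extra input is needed to exclude a non-zero multiple of $e_i^A$. I would resolve this by a radical argument: each generator $p_l + I$ of $X$ with $l > n$ is a non-trivial path, so $X \subseteq \rad(A)$. Since $B \supseteq L$, one has $\rad(B) = B \cap \rad(A)$, and combined with $A = B \oplus X$ this yields $\rad(A) = \rad(B) \oplus X$. As $p$ is non-trivial, $p + I \in \rad(A)$, forcing $b \in \rad(B) \cap e_i^A B e_i^A = \rad(e_i^A B e_i^A)$. But $e_i^A B e_i^A$ is a subalgebra of $\field e_i^A$ containing $e_i^A$, hence equals $\field e_i^A$, which is semisimple and has zero radical. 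Therefore $b = 0$, and $p + I = x \in X$.
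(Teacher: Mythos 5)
Your proof is correct and follows essentially the same strategy as the paper: both project $p+I$ onto the $B$-summand of the fixed decomposition $A=B\oplus X$ and then use directedness of $B$ (i.e.\ $B\subseteq B_{\max}$, so the relevant component is a sum of direction-preserving elements plus something in $L$) together with $i\geq j$ to conclude that this component vanishes. The paper handles both cases $i>j$ and $i=j$ at once by reducing to coefficients in $\rad(B)$, whereas you treat $i=j$ separately with the radical argument, but this is only a difference in bookkeeping.
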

\begin{proof}
    Let $p+I\in e_j^AAe_i^A$ be non-direction-preserving and non-trivial. Recall that $A$ decomposes as a right $B$-module into a direct sum 
    \begin{align*}
        A=\bigoplus_{l=1}^N (p_l+I)B=B\oplus X
    \end{align*}
    where where $p_i+I=e_i^A$ for $1\leq i\leq n$ and $p_l+I\in e_{t_l}^AAe_{s_l}^A$ are non-trivial paths for $l>n$.
    In particular, we have $b_1, \dots, b_n\in B$ such that
    \begin{align*}
        p+I=\sum_{l=1}^N (p_l+I)b_l=\sum_{l=1}^N e_j^A (p_l+I)b_l e_i^A.
    \end{align*}
    Since $p_l+I\in \rad(A)$ for $l>n$ and $p+I\in \rad(A)$, we conclude that
    $$\sum_{l=1}^n (p_l+I)b_l=\sum_{l=1}^n e_l^A b_i\in \rad(B),$$
    so that we can assume without loss of generality that $b_l\in \rad(A)\cap B=\rad(B)$ for $l=1, \dots , n$.
    Since every element in $\rad(B)$ is a sum of direction-preserving elements, this implies that $e_j^A (p_l+I)b_l e_i^A=0$ for $l\leq n$.
    Thus, 
    \begin{equation*}
        p+I=\sum_{l=n+I}^N e_j^A (p_l+I)b_l e_i^A\in X.\qedhere
    \end{equation*}
\end{proof}
\begin{proposition}\label{proposition_uniqueness}
     Suppose $(A, \leq_A)$ is a basic monomial quasi-hereditary algebra and $B$ is an exact Borel subalgebra containing all trivial and all right-minimal direction-preserving paths. Then $B$ is generated by the right-minimal direction-preserving paths.
\end{proposition}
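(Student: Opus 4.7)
The plan is to establish $B \subseteq B_{\min}$; the reverse inclusion is immediate since, by hypothesis, $B$ contains the generators of $B_{\min}$. In particular $B \supseteq L$, so by Remark \ref{remark_borel_is_subset} we have $B_{\min} \subseteq B \subseteq B_{\max}$. The main tool is the decomposition $A = B \oplus X$ from Lemma \ref{lemma_paths}: after reordering so that $p_l = e_l^A$ for $1 \leq l \leq n$, set $X = \bigoplus_{l > n}(p_l + I)B$. The strategy is to show that every non-zero path $p + I$ in $A$ lies entirely in $B$ or entirely in $X$, classified according to its shape, and then to conclude using that the paths form a basis of $A$ and that the sum $B \oplus X$ is direct.

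The classification splits into four cases. Trivial paths lie in $L \subseteq B$. A non-trivial path that can be written as a product of right-minimal direction-preserving paths lies in $B_{\min} \subseteq B$ by hypothesis; by Lemma \ref{lemma_bijection}, these are exactly the direction-preserving paths with every interior vertex strictly below $t(p)$. A non-trivial non-direction-preserving path lies in $X$ by Proposition \ref{proposition_indirection-preserving}. The only remaining and critical case is a direction-preserving path $p$ with $t(p) < \max(p)$.

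In this critical case, Corollary \ref{corollary_span} ensures that $\max(p)$ appears exactly once as a vertex of $p$, and since $s(p) < t(p) < \max(p)$ it must occur as an interior vertex. Factoring $p = q_1 q_2$ at this unique occurrence yields paths $q_2 \colon s(p) \to \max(p)$ and $q_1 \colon \max(p) \to t(p)$. The factor $q_2$ is direction-preserving with $t(q_2) = \max(q_2) = \max(p)$ and all interior vertices strictly smaller (again by Corollary \ref{corollary_span}), so $q_2 + I \in B_{\min} \subseteq B$ by Lemma \ref{lemma_bijection}. The factor $q_1$ is non-trivial and non-direction-preserving, so $q_1 + I \in X$ by Proposition \ref{proposition_indirection-preserving}. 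Since $X$ is a right $B$-submodule of $A$, the product $p + I = (q_1+I)(q_2+I)$ lies in $X \cdot B \subseteq X$.

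With the classification in hand, the conclusion is immediate: for any $b \in B$, expanding $b$ in the standard path basis of $A$ as $b = \sum_p c_p (p + I)$ and applying the projection $A \to X$ along $B$ gives $0 = \sum_{p \text{ in } X} c_p (p+I)$, so $c_p = 0$ whenever the path $p$ falls into $X$. Hence $b$ is a linear combination of trivial paths and paths belonging to $B_{\min}$, and thus $b \in B_{\min}$. The main obstacle is the factorization argument in the third paragraph, and it is precisely the quasi-hereditary hypothesis, through Corollary \ref{corollary_span}, that pins down the unique occurrence of $\max(p)$ and makes the cut well-defined.
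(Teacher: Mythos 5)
Your proof is correct and follows essentially the same route as the paper: both rest on the decomposition $A=B\oplus X$ from Lemma \ref{lemma_paths}, on Proposition \ref{proposition_indirection-preserving} to place the non-direction-preserving paths in $X$, and on the directness of the sum to kill the $X$-component of an element of $B$. The only cosmetic difference is that you classify individual basis paths by hand via the factorization supplied by Corollary \ref{corollary_span}, whereas the paper invokes the packaged decomposition $A=\bigoplus_{q+I\in\mathcal{P}}(q+I)B_{\min}$ of Corollary \ref{proposition_decomposition}.
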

\begin{proof}
    Let $B_{\min}$ be the subalgebra of $A$ generated by the right-minimal direction-preserving paths.
    Then $B_{\min}\subseteq B$. Let $b\in B$. Then, by Corollary \ref{proposition_decomposition}, we can write $b=\sum_{q+I\in \mathcal{P}}(q+I)b_q$ where $\mathcal{P}$ is the set of all paths $q+I$ in $A$ such that $s(q)=\max(q)$ and $b_q\in B_{\min}$ for every $q+I\in \mathcal{P}$.
    By Proposition \ref{proposition_indirection-preserving}, we know that if $q+I$ is non-trivial, then $q+I\in X$, so that, since $b_q\in B_{\min}\subseteq B$ and $X$ is right $B$-module, $(q+I)b_q\in X$. Hence we have $$b=b'+x$$ for  $x=\sum_{q+I\in \mathcal{P}\setminus \{e_i^A|1\leq i\leq n\}}(q+I)b_q\in X$ and $b'=\sum_{i=1}^n e_i^A b_{e_i}\in B_{\min}$.
    Since $B_{\min}\subseteq B$ and $A=B\oplus X$, this implies that $x=0$ so that $b=b'\in B_{\min}$.
\end{proof}
\begin{corollary}
    Suppose $(A, \leq_A)$ is a basic monomial quasi-hereditary algebra and $B$ is an exact Borel subalgebra with a basis consisting of paths. Then $B$ is generated by the trivial and the right-minimal direction-preserving paths. 
\end{corollary}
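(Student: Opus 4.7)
The plan is to combine the two results already established in this section. By Lemma \ref{lemma_borel_includes}, any exact Borel subalgebra $B$ of $A$ with a basis consisting of paths must contain every right-minimal direction-preserving path, and it must also contain all trivial paths (since an idempotent $e_i^A$ is itself a path and $B$ necessarily contains a complete set of primitive orthogonal idempotents mapping bijectively to those of $A$). Thus the hypotheses of Proposition \ref{proposition_uniqueness} are satisfied.

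First I would verify in one sentence that $B$ contains $L=\bigoplus_i \field e_i^A$: since $B$ has a basis of paths and is an exact Borel subalgebra, the argument at the beginning of the proof of Lemma \ref{lemma_borel_includes} already shows that all trivial paths lie in $B$. Then I would invoke Lemma \ref{lemma_borel_includes} to conclude that every right-minimal direction-preserving path of $A$ lies in $B$.

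Second, with this containment in hand, I would apply Proposition \ref{proposition_uniqueness} directly: it states precisely that an exact Borel subalgebra containing all trivial and all right-minimal direction-preserving paths must be generated by the right-minimal direction-preserving paths (together with the trivial ones, which generate $L$). This yields the claimed conclusion.

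There is no real obstacle here; the statement is essentially a repackaging of Lemma \ref{lemma_borel_includes} and Proposition \ref{proposition_uniqueness} into a single corollary. The only point worth being careful about is to make explicit that ``having a basis of paths'' ensures the trivial path condition (so that $L\subseteq B$), since Proposition \ref{proposition_uniqueness} requires this explicitly as a hypothesis. After that, the corollary is immediate.
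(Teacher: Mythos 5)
Your proposal is correct and follows exactly the paper's own argument: note that a path-basis exact Borel subalgebra contains all trivial paths, invoke Lemma \ref{lemma_borel_includes} to get the right-minimal direction-preserving paths, and then apply Proposition \ref{proposition_uniqueness}. No gaps.
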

\begin{proof}
    Since $B$ is an exact Borel subalgebra generated by paths, $B$ contains all trivial paths. Moreover, by Lemma \ref{lemma_borel_includes}, $B$ contains every right-minimal direction preserving path. Hence this is a direct consequence of Proposition \ref{proposition_uniqueness}.
\end{proof}
\begin{corollary}
    Let  $(A=\field Q/I, \leq_A)$  be a basic algebra such that $Q$ is a tree.
    Since $Q$ has no parallel paths, $A$ is monomial, so let $B_{\min}$ be the subalgebra of $A$ generated by the right-minimal direction-preserving paths.\\
    Suppose $B$ is any exact Borel subalgebra of $A$. 
    Then there is an $a\in A^{\times}$ such that $B=aB_{\min}a^{-1}$. 
\end{corollary}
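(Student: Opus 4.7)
The plan is to reduce the statement to the corollary immediately above, which says that any exact Borel subalgebra of a basic monomial quasi-hereditary algebra with a basis of paths equals $B_{\min}$. Concretely, I would look for $a \in A^{\times}$ such that $aBa^{-1} \supseteq L$, and then exploit the tree hypothesis to show that any exact Borel subalgebra containing $L$ automatically has a basis of paths.

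For the first reduction I would apply the Wedderburn--Malcev theorem twice. Since $A$ is basic, $\End_A(L_i^A) \cong \field$ for every $i$, and the definition of an exact Borel subalgebra then forces $\End_B(L_i^B) \cong \field$, so that $B/\rad(B) \cong \field^n$ is separable over $\field$. Wedderburn--Malcev therefore yields a semisimple complement $L_B \subseteq B$ with $B = L_B \oplus \rad(B)$. Since $L_B \cong \field^n$ contains no nonzero nilpotent elements, $L_B \cap \rad(A) = 0$; a dimension count then gives $L_B \oplus \rad(A) = A$, so $L_B$ is a Levi complement of $\rad(A)$ inside the ambient algebra $A$. Applying the conjugacy part of Wedderburn--Malcev to $A$ (again using the separability of $A/\rad(A) \cong \field^n$) produces an element $a \in A^{\times}$, lying in fact in $1 + \rad(A)$, with $a L_B a^{-1} = L$. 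Replacing $B$ by the exact Borel subalgebra $aBa^{-1}$ (which is Borel since it is an inner conjugate), I may assume $L \subseteq B$.

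Under this assumption the tree hypothesis delivers a basis of paths for $B$. Indeed, since $Q$ is a tree there is at most one path between any two vertices, and hence $\dim_{\field} e_j^A A e_i^A \leq 1$ for all $i,j$. The inclusion $L \subseteq B$ turns $B$ into an $L$-bimodule, giving $B = \bigoplus_{i,j} e_j^A B e_i^A$ with each summand either zero or one-dimensional, and in the latter case spanned by the unique path $p + I$ with $s(p) = i$ and $t(p) = j$. Thus $B$ admits a basis consisting of paths, and the preceding corollary yields $B = B_{\min}$. Undoing the conjugation, the original subalgebra satisfies $B = a^{-1} B_{\min} a$, proving the claim with the inner element $a^{-1} \in A^{\times}$ in place of $a$.

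The main obstacle is the verification that the Levi complement $L_B$ produced by Wedderburn--Malcev inside $B$ is also a Levi complement of $\rad(A)$ inside the ambient algebra $A$; this is precisely the ingredient that makes the conjugating element live in $A^{\times}$ rather than merely in $B^{\times}$, and it is what allows the reduction to the case $L \subseteq B$ to be carried out through an inner automorphism of $A$. Once that is in place, the tree hypothesis does all the remaining work, collapsing the $L$-bimodule decomposition of $B$ to a basis of paths.
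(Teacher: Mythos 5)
Your proposal is correct and follows essentially the same route as the paper: produce a maximal semisimple subalgebra $L_B$ of $B$, conjugate it onto $L$ by Wedderburn--Malcev applied in the ambient algebra $A$, observe that the tree hypothesis forces any exact Borel subalgebra containing $L$ to have a basis of paths, and conclude by the preceding uniqueness result. The only cosmetic differences are that the paper obtains $L_B$ from the orthogonal idempotents of a decomposition of $B$ into indecomposable projectives rather than from Wedderburn--Malcev inside $B$, and that your assertion $B/\rad(B)\cong \field^n$ (i.e.\ that $B$ is basic) is most cleanly justified by the very dimension count $\dim L_B\leq \dim A-\dim\rad(A)=n$ that you invoke afterwards, so the order of those two steps should be swapped.
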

\begin{proof}
Suppose $Q$ has $n$ vertices $\{1, \dots, n\}$ and denote by $e_i\in \field Q$ the empty path at the vertex $i$ and by $e_i^A:=e_i+I$ the corresponding idempotent in $A$.\\
    Since $B$ is an exact Borel subalgebra of $A$, it has as many simple modules as $A$. Moreover, since $B$ is a finite-dimensional algebra, there is a bijection between simple $B$-modules up to isomorphism and indecomposable projective $B$-modules up to isomorphism. 
    Because all indecomposable projective $B$-modules are direct summands of $B$, this implies that $B$ has at least $n$ indecomposable summands as a left $B$-module, so that $B\cong \End_B(B)^{\op}$ contains a copy $L_B$ of $\field^n$. Since $\field^n$ is separable and $A=\bigoplus_{i=1}^n \field e_i\oplus \rad(A)$, where $L:=\bigoplus_{i=1}^n \field e_i\cong \field^n$, the Wedderburn--Malcev theorem as stated in \cite[Theorem 1]{Farnsteiner} yields an  $a\in A^{\times}$ such that such that $L_B=a^{-1}La$, so that
    $L=\bigoplus_{i=1}^n\field e_i\subseteq aBa^{-1}=:B'$ \\
    Now, since $Q$ is a tree and and $L\subseteq B'$, $B'$ has a basis consisting of paths. Moreover, since $B$ is an exact Borel subalgebra of $A$, so is $B'$. Hence $B'=B_{\min}$ by Proposition \ref{proposition_uniqueness}. 
\end{proof} 

 \section{Regularity}\label{section5}
 Regular exact Borel subalgebras play a distinguished role in the study of exact Borel subalgebras, primarily because of the existence result in \cite{KKO} and the uniqueness result in \cite{Miemietz}. They have been further studied for example in \cite{Conde, BKK}.
 Here, we investigate when the exact Borel subalgebra $B_{\min}$ of a fixed quasi-hereditary monomial algebra $A$ is a regular exact Borel subalgebra. 
If we assume $\field$ to be algebraically closed, then by \cite[Theorem 8.4]{uniqueness}, this is equivalent to $A$ admitting a regular exact Borel subalgebra. In \cite[Theorem D]{Conde}, several criteria for this were given. Here, we will use \cite[Point 7; Theorem D]{Conde}, which describes this in terms of a bijection between $\Ext^1_A(\Delta_i, \Delta_j)$ and $\Ext^1_A(\Delta_i, L_j)$. Our goal is to give a criterion which can be directly checked using paths in $Q$ and $I$; to this end, will explicitly describe  $\Ext^1_A(\Delta_i, \Delta_j)$ and $\Ext^1_A(\Delta_i, L_j)$ in these term.\\
Since all of the results we use ask for $\field$ to be algebraically closed, we will assume that this is the case from now on. Recall that as before, we assume that $A=\field Q/I,$ where $Q_0=\{1, \dots, n\}$, and that the partial order $\leq_A$ is induced by the natural order on the vertices of $Q$. Moreover, let us assume from now on that $(A, \leq_A)$ is quasi-hereditary.\\
\begin{definition}
    Let $1\leq i<j\leq n$. Then we define $E_{ij}$ as the set of non-zero right-minimal direction-preserving paths $p+I$ in $A$ such that $s(p)=i<t(p)=j$, and let $E_i:=\bigcup_{j=1}^n E_{ij}$ be the set of non-zero right-minimal direction-preserving paths $p+I$ in $A$ starting in $i$. Moreover, for $p\in E_{ij}$ let $E_{p}$ be the set of paths $q+I$ in $A$ with $s(q)=j$ and $qp+I=0$.
\end{definition}
\begin{example}\label{example_running}
    Let $Q$ be the quiver 
\[\begin{tikzcd}[ampersand replacement=\&]
	\& 2 \\
	1 \&\& 4 \\
	\& 3 \\
	\& 5
	\arrow["\alpha"', from=1-2, to=2-1]
	\arrow["{\alpha'}", from=1-2, to=2-3]
	\arrow["\beta"', from=2-1, to=3-2]
	\arrow["{\beta'}", from=2-3, to=3-2]
	\arrow["\gamma"', from=3-2, to=4-2]
\end{tikzcd}\]
let $I$ be the ideal in $\field Q$ generated by $\gamma\beta\alpha$, and let $A=\field Q/I$. Then $\beta\alpha+I\in E_{23}$, but $\beta'\alpha'\notin E_{23}$ since $\alpha'$ is already direction preserving. Moreover, $E_{\beta\alpha}=\{\gamma\}$ and $E_{\alpha'}=(0)$
\end{example}
\begin{lemma}
    Let $1\leq i\leq n$. Then, there is a short exact sequence
\[\begin{tikzcd}[ampersand replacement=\&]
	{\bigoplus_{p+I\in E_{i}}\bigoplus_{q+I\in E_p}P_{t(q)}} \& {\bigoplus_{p+I\in E_i}P_{t(p)}} \& {P_i} \& {\Delta_i} \& {(0)}
	\arrow["{\bigoplus(\sum r_q)}", from=1-1, to=1-2]
	\arrow["{\sum r_p}", from=1-2, to=1-3]
	\arrow["{\pi_i}", from=1-3, to=1-4]
	\arrow[from=1-4, to=1-5]
\end{tikzcd}\]
where $\pi_i$ is the canonical projection $\pi_i:P_i\rightarrow \Delta_i$ and $r_p$ denotes right multiplication by $p+I$.
\end{lemma}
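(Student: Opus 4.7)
The plan is to identify $\Delta_i$ with $P_i/A\rad(B_{\min})e_i^A$, from which the surjection $\pi_i$ and the computation of its kernel follow directly, and then to analyze the kernel of $\sum r_p$ using the monomial path basis of $A$ together with a uniqueness statement for right factors in $E_i$.

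First I would handle exactness at $\Delta_i$ and at $P_i$. By Theorem \ref{thm_borel_hereditary}, $B_{\min}$ is an exact Borel subalgebra, so $\Delta_i\cong A\otimes_{B_{\min}}L_i^{B_{\min}}\cong P_i/A\rad(B_{\min})e_i^A$. A basis of $\rad(B_{\min})e_i^A$ is given by the non-trivial paths in $B_{\min}$ starting at $i$; by the definition of $B_{\min}$, each such path has the form $\rho_1\cdots\rho_\ell$ with $\rho_\ell+I\in E_i$. Hence $A\rad(B_{\min})e_i^A=\sum_{p+I\in E_i}A(p+I)$, which is precisely the image of $\sum r_p$. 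This handles the right half of the sequence.

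The main step is exactness in the middle. Given $(x_p)_p$ in the kernel of $\sum r_p$, I would expand each $x_p\in P_{t(p)}$ in the path basis as $x_p=\sum_r c_{p,r}(r+I)$, where $r$ runs over paths in $A$ starting at $t(p)$. The vanishing condition becomes $\sum_{p,r}c_{p,r}(rp+I)=0$ in $A$. Each term $rp+I$ is either zero (precisely when $rp\in I$) or a monomial basis element of $A$. The crucial point is that if $rp=r'p'$ holds as paths in $Q$ with $p,p'\in E_i$, then $(p,r)=(p',r')$; granted this, the nonzero terms $rp+I$ are pairwise distinct across pairs $(p,r)$, so the cancellation forces $c_{p,r}=0$ whenever $rp\notin I$.

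The hard part is this distinctness claim. Without loss of generality one of $p,p'$ is a right subpath of the other in $Q$, say $p'=sp$ for some path $s$. If $s$ is non-trivial, then $t(p)=s(s)$ is an interior vertex of $p'$; but $p'\in E_i$ is right-minimal direction-preserving, so every interior vertex of $p'$ is at most $i$, giving $t(p)\leq i$. This contradicts $p\in E_i$, which requires $i=s(p)<t(p)$. Hence $s$ is trivial, $p=p'$, and consequently $r=r'$. Combining, whenever $c_{p,r}\neq 0$ we must have $rp\in I$, so $r+I\in E_p$, and $(x_p)_p$ is the image under $\bigoplus(\sum r_q)$ of the element whose $(p,q)$-component is $c_{p,q}\,e_{t(q)}^A\in P_{t(q)}$. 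The reverse inclusion holds because $qp+I=0$ for every $q+I\in E_p$, completing exactness.
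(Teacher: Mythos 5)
Your proof is correct, and it reaches the statement by a route that differs from the paper's in two respects. For exactness at $P_i$, the paper works directly from the definition of $\Delta_i$ as $P_i$ modulo the trace $\eta_{>i}P_i=\sum_{\varphi:P_j\to P_i,\,j>i}\im(\varphi)$, and shows $\eta_{>i}P_i=\sum_{p+I\in E_i}\im(r_p)$ by factoring an arbitrary direction-preserving path $p=p'p''$ through a right-minimal one $p''$; you instead invoke Theorem \ref{thm_borel_hereditary} to identify $\Delta_i$ with $P_i/A\rad(B_{\min})e_i^A$ and read off the same image from the path basis of $\rad(B_{\min})e_i^A$. Both are legitimate (the lemma sits after Theorem \ref{thm_borel_hereditary}, so there is no circularity), though the paper's version is independent of the Borel subalgebra machinery. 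For exactness in the middle, the paper records the per-summand presentations $\bigoplus_{q+I\in E_p}P_{t(q)}\to\ker(r_p)\to 0$ and then splices their direct sum onto the right-hand sequence; this implicitly uses that $\ker(\sum_p r_p)=\bigoplus_p\ker(r_p)$, which is exactly the content of your distinctness claim that $rp=r'p'$ with $p+I,p'+I\in E_i$ forces $(p,r)=(p',r')$ (via right-minimality excluding $t(p)$ as an interior vertex of $p'$). Your coefficient-comparison argument makes this step explicit, so your write-up is in fact more complete at the one point the paper treats tersely.
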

\begin{proof}
Let $1\leq i, j\leq n$. Then, there is a vector space isomorphism
\begin{align*}
    e_j^AAe_i^A\rightarrow \Hom_A(P_j, P_i), a\mapsto r_a
\end{align*}
where $r_a:P_j=Ae_j^A\rightarrow P_i=Ae_i^A, x\mapsto xa$.
    Since $A=\field Q/I$ is a monomial algebra, there is a basis of $e_j^A A e_i^A$ given by all non-zero paths $p+I$ with $s(p)=i$ and $t(p)=j$. Moreover, if a path $p+I$ with $s(p)=i<t(p)$ is not right-minimal direction-preserving, then there are paths $p'+I$ and $p''+I$ in $A$ such that $p=p'p''$ and $p''+I$ is right-minimal direction preserving. In particular, $r_{p+I}=r_{p''+I}\circ r_{p'+I}$ so that $\im(r_{p+I})\subseteq \im(r_{p''+I})$.\\
    Let $\eta_i P_i:=\sum_{\varphi: P_j\rightarrow P_i, j>i}\im(\varphi)$, so that $\Delta_i^A=P_i/\eta_{>i}P_i$.
    Then, by the above, $\eta_{>i}P_i=\sum_{p\in E_i}\im(r_p)$, so that 
\[\begin{tikzcd}[ampersand replacement=\&]
	{\bigoplus_{p+I\in E_i}P_{t(p)}} \& {P_i} \& {\Delta_i} \& {(0)}
	\arrow["{\sum r_p}", from=1-1, to=1-2]
	\arrow["{\pi_i}", from=1-2, to=1-3]
	\arrow[from=1-3, to=1-4]
\end{tikzcd}\]
is a short exact sequence. Moreover, since $A$ is monomial, there is an exact sequence
\[\begin{tikzcd}[ampersand replacement=\&]
	{\bigoplus_{q+I\in E_p}P_{t(q)}} \& {\ker(r_p)} \& {(0)}
	\arrow["{\sum r_q}", from=1-1, to=1-2]
	\arrow[from=1-2, to=1-3]
\end{tikzcd}\]
for every $p\in E_i$.
Taking the direct sum of these exact sequences and wedging the resulting exact sequence with 
\[\begin{tikzcd}[ampersand replacement=\&]
	{\bigoplus_{p+I\in E_i}P_{t(p)}} \& {P_i} \& {\Delta_i} \& {(0),}
	\arrow["{\sum r_p}", from=1-1, to=1-2]
	\arrow["{\pi_i}", from=1-2, to=1-3]
	\arrow[from=1-3, to=1-4]
\end{tikzcd}\]
we obtain the desired exact sequence.
\end{proof}
\begin{example}
    Let $A$ be as in Example \ref{example_running}. Then the projective resolution for $\Delta_2^A=\begin{pmatrix}
        2 \\ 1
    \end{pmatrix}$ which we obtain here is
\[\begin{tikzcd}[ampersand replacement=\&]
	{P_5} \& {P_3^A\oplus P_4^A} \& {P_2^A} \& {\Delta_2^A}
	\arrow["{(r_{\gamma}, 0)}", from=1-1, to=1-2]
	\arrow["{r_{\beta\alpha}+r_{\alpha'}}", from=1-2, to=1-3]
	\arrow[from=1-3, to=1-4]
\end{tikzcd}\]
\end{example}
To describe extension spaces, let us recall the following, well-known fact:
\begin{lemma}\label{lemma_extensions}
   Let $M$ and $N$ be $A$-modules and let $P^\cdot(M)=(P^\cdot(M), d_M)$ and $P^{\cdot}(N)=(P^{\cdot}(N), d_N)$ be projective resolutions of $M$ and $N$ respectively. Denote by $\pi_M:P^0(M)\rightarrow M$ and $\pi_N:P^0(N)\rightarrow N$ the projections onto $M$ and $N$ respectively. Then, the following holds:
   \begin{enumerate}
       \item There is an epimorphism of vector spaces from the space of chain maps $(f_k)_k:P^\cdot(M)\rightarrow P^\cdot(N)[1]$ to the space of homomorphisms\\ $f: P^1(M)\rightarrow P^0(N)$ with $\pi_N\circ f\circ d_M^{(2)}=0$, which is given by $(f_k)_k\mapsto f_1$, the kernel of which consists of nullhomotopic maps.
       \item A chain map $(f_k)_k:P^\cdot(M)\rightarrow P^\cdot(N)[1]$ is nullhomotopic if and only if $$f_1=d_N^{(1)}\circ g_1-g_0\circ d_M^{(1)}$$ for some $A$-linear maps $g_1:P_M^1\rightarrow P_N^1$ and $g_0:P_M^0\rightarrow P_N^0$.
   \end{enumerate}
\end{lemma}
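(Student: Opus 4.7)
My plan for part (1) is to use standard inductive lifting through the differentials $d_N^{(k)}$, exploiting projectivity of each $P^k(M)$. Well-definedness of the assignment $(f_k)_k\mapsto f_1$ will be immediate from the chain-map identity at $k=2$ composed with $\pi_N$. For surjectivity, given $f:P^1(M)\to P^0(N)$ with $\pi_N\circ f\circ d_M^{(2)}=0$, I note that $f\circ d_M^{(2)}$ lands in $\ker\pi_N=\im d_N^{(1)}$, so projectivity of $P^2(M)$ yields a lift $f_2$ through $d_N^{(1)}$; higher $f_k$ are built analogously, with the necessary image conditions coming from $d_M\circ d_M=0$ together with the chain-map relations at previous stages. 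For the kernel statement, I will suppose $f_1=0$, set $g_0=g_1=0$, and inductively construct $g_k$ by lifting the appropriate combination of $f_k$ and $g_{k-1}\circ d_M^{(k)}$ through $d_N^{(k)}$; existence of the lift follows from projectivity of $P^k(M)$ together with a short check using the chain-map identity and the previously established null-homotopy relation.

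For part (2), the forward direction is just reading off the null-homotopy equation at level one. For the converse, the plan is to reduce to the kernel statement of part (1): given $g_0$ and $g_1$ with $f_1=d_N^{(1)}\circ g_1-g_0\circ d_M^{(1)}$, I would extend by zero to a family $(g_0,g_1,0,0,\dots)$ in the Hom-complex and take its coboundary, which is automatically a null-homotopic chain map whose degree-one component coincides with $f_1$. Subtracting from $(f_k)_k$ yields a chain map with vanishing degree-one component, which by part (1)'s kernel statement is null-homotopic, so $(f_k)_k$ itself is null-homotopic.

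The main technical nuisance is bookkeeping signs in the Hom-complex differential and in the shift $[1]$; these are what produce the minus sign in $d_N^{(1)}\circ g_1-g_0\circ d_M^{(1)}$ rather than a plus sign, and also determine the exact linear combination lifted at each inductive step in part (1). Once a consistent sign convention is fixed, both parts reduce to the same standard lifting argument and there is no conceptual difficulty beyond classical homological algebra; the statement is really just a concrete repackaging of the isomorphism $\Ext^1(M,N)\cong\Hom_{K(A)}(P^\cdot(M),P^\cdot(N)[1])$ adapted to reading off the degree-one component, which is what the subsequent explicit calculations with monomial resolutions will require.
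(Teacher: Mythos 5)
Your argument is correct: the well-definedness, surjectivity, and kernel statements in (1) all follow from the standard comparison-theorem lifting through the projectives $P^k(M)$ using exactness of $P^\cdot(N)$, and your reduction of the converse in (2) to the kernel statement of (1) via the coboundary of $(g_0,g_1,0,\dots)$ is sound. The paper gives no proof of this lemma at all (it is recalled as a well-known fact), so there is nothing to diverge from; your proof is exactly the classical argument the paper is implicitly invoking.
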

Using this, we can give an explicit description of $\Ext^1_A(\Delta_i^A, L_j^A)$:
\begin{lemma}
    There is an isomorphism of vector spaces $f:\field E_{ij}\rightarrow \Ext^1_A(\Delta_i^A, L_j^A)$ given by $p+I\mapsto [\pi_p]$, where $\pi_p$ is the canonical projection
    \begin{align*}
        \pi_p:\bigoplus_{q+I\in E_i}P_{t(q)}\rightarrow P_{t(p)}=P_j.
    \end{align*}
onto the direct summand corresponding to $p+I\in E_i$.
\end{lemma}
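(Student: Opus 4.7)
The plan is to use the projective resolution of $\Delta_i^A$ constructed in the preceding lemma to compute $\Ext^1_A(\Delta_i^A, L_j^A)$ directly, by applying $\Hom_A(-, L_j^A)$ and taking cohomology in degree one. Throughout, $[\pi_p]$ is to be interpreted as the class of the cocycle $\bar\pi_p$ obtained by composing the canonical projection $\pi_p : P^1 \twoheadrightarrow P_{t(p)} = P_j$ with the projection $P_j \twoheadrightarrow L_j^A$.

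First I would identify $\Hom_A(P^1, L_j^A)$ as a vector space. Using the natural isomorphism $\Hom_A(P_k, L_j^A) \cong e_k L_j^A$, which is one-dimensional when $k=j$ and zero otherwise, and the decomposition $P^1 = \bigoplus_{p+I \in E_i} P_{t(p)}$, we get $\Hom_A(P^1, L_j^A) \cong \bigoplus_{p+I \in E_i,\,t(p)=j} \field \cong \field E_{ij}$, with basis vectors $\{\bar\pi_p : p+I \in E_{ij}\}$. Next, the coboundaries vanish: $\Hom_A(P^0, L_j^A) = \Hom_A(P_i, L_j^A) \cong e_i L_j^A$, which is zero because the hypothesis $1 \leq i < j$ (coming from $E_{ij}$ with $i < j$) makes the idempotent $e_i^A$ act as zero on $L_j^A$.

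Then I would verify that every element of $\Hom_A(P^1, L_j^A)$ is a cocycle, i.e.\ that $\bar\pi_p \circ d^{(2)} = 0$ for every $p+I \in E_{ij}$. The differential $d^{(2)}$ is a direct sum indexed by $p' \in E_i$ of maps $\sum_{q+I \in E_{p'}} r_q$ landing in the $p'$-summand $P_{t(p')}$ of $P^1$. On a summand with $p' \neq p$ the projection $\pi_p$ kills everything. On the summand with $p' = p$ we need $(\text{proj}\ P_j \to L_j^A) \circ r_q = 0$ for each $q+I \in E_p$; but $q$ must be a nontrivial path (otherwise $q + I = e_j^A$ would force $qp+I = p+I \neq 0$, contradicting $q+I \in E_p$), hence $q+I \in \rad(A)$, so $\im(r_q) \subseteq \rad(P_j) = \ker(P_j \twoheadrightarrow L_j^A)$.

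Combining these three observations, $\Ext^1_A(\Delta_i^A, L_j^A) \cong \Hom_A(P^1, L_j^A)/0 \cong \field E_{ij}$, and the isomorphism is precisely $p+I \mapsto [\bar\pi_p]$. The only step that is not completely formal is the cocycle verification, and even there the content reduces to the observation that paths in $E_p$ are forced to be nontrivial, which is immediate; there is no genuine obstacle in the argument.
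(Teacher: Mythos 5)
Your proof is correct and follows essentially the same route as the paper: both compute $\Ext^1_A(\Delta_i^A,L_j^A)$ from the explicit projective resolution of $\Delta_i^A$ in the preceding lemma, and the key point in both cases is that the paths indexing the differentials are nontrivial, hence act radically. The only difference is cosmetic: you apply $\Hom_A(-,L_j^A)$ and take cohomology directly (so the coboundary group visibly vanishes, since $\Hom_A(P_i,L_j^A)\cong e_i^AL_j^A=0$ for $i<j$), whereas the paper phrases the computation via chain maps into the projective resolution of $L_j^A$ using Lemma \ref{lemma_extensions} and identifies the coboundaries as the maps with image in $\rad(P_j)$.
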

\begin{proof}
    Denote by $p_j:P_j\rightarrow L_j$ the canonical projection. In order to see that $\pi_p$ gives rise to a cocycle in $\Hom_A^{\cdot}(P^\cdot(\Delta_i^A), P^\cdot(L_j^A))$, we need to show that $$\pi_p(\ker(\sum_{p'+I\in E_i} r_{p'+I}))\subseteq \ker(p_j).$$ But this is true, since $p'+I\neq 0$ for $p'+I\in E_i$, so that $\ker(\sum_{p'+I\in E_i} r_{p'+I})$ is contained in the radical of $\bigoplus_{p'+I\in E_i}P_{t(p')}$, and thus 
    $$\pi_p(\ker(\sum_{p'+I\in E_i} r_{p'+I}))\subseteq \rad(P_j)=\ker(p_j).$$
    Moreover, since $\ker(p_j)=\rad(P_j)$, every morphism $f:\bigoplus_{q+I\in E_i}P_{t(q)}\rightarrow P_{t(p)}$ with $\im(f)\subseteq \rad(P_j)$ gives rise to a coboundary. Since $r_{p+I}$ is radical for every $p+I\in E_i$, we can conclude by \ref{lemma_extensions} that a morphism $f:\bigoplus_{q+I\in E_i}P_{t(q)}\rightarrow P_{t(p)}$ gives rise to a coboundary in $\Hom_A^{\cdot}(P^\cdot(\Delta_i^A), P^\cdot(L_j^A))$ if and only if $\im(f)\subseteq \rad(P_j)$. Hence the canoncial projections 
    \begin{align*}
        \pi_p:\bigoplus_{q+I\in E_i}P_{t(q)}\rightarrow P_{t(p)}=P_j.
    \end{align*}
    for all $p\in E_i$ give rise to a basis of $\Ext^1_A(\Delta_i^A, L_j^A)$, so that $f$ is an isomorphism of vector spaces.
\end{proof}
\begin{example}
    Let $A$ be as in Example \ref{example_running}. Then this lemma tells us that $\Delta_2^A=\begin{pmatrix}
        2 \\ 1
    \end{pmatrix}$ has two extensions by simple modules, corresponding to the right-minimal paths $\beta\alpha$ and $\alpha'$. The first one is an extension with $L_3^A$ and it corresponds to the module with Loewy diagram 
\[\begin{tikzcd}[ampersand replacement=\&]
	\& 2 \\
	1 \\
	\& 3,
	\arrow[from=1-2, to=2-1]
	\arrow[from=2-1, to=3-2]
\end{tikzcd}\] and the second is an extension by $L_4^A$ and it corresponds to the module with Loewy diagram 
\[\begin{tikzcd}[ampersand replacement=\&]
	\& 2 \\
	1 \&\& 4.
	\arrow[from=1-2, to=2-1]
	\arrow[from=1-2, to=2-3]
\end{tikzcd}\]
\end{example}
In order to describe $\Ext^1_A(\Delta_i^A, \Delta_j^A)$, we need another definition:
\begin{definition}
    Let $E'_{ij}$ be the set of pairs of non-zero (possibly trivial) paths $(p+I, q+I)$ such that there is some $k\leq j$ fulfilling the following:
    \begin{enumerate}
        \item $p+I\in E_{ik}$,
        \item $s(q)=j=\max(q)\geq t(q)=k$.
        \item For any path $q'+I$ in $A$ with $s(q')=k$ such that $\max(q')\leq j$ we have $q'p\in I\Rightarrow q'q\in I$.
    \end{enumerate}
\end{definition}

\begin{lemma}
    There is an epimorphism of vector spaces $f':\field E_{ij}'\rightarrow \Ext^1_A(\Delta_i^A, \Delta_j^A)$
    given by $(p+I, q+I)\mapsto [r_q\circ \pi_p]$, where 
    \begin{align*}
        \pi_p:\bigoplus_{p'+I\in E_i}P_{t(p')}\rightarrow P_{t(p)}=P_k
    \end{align*}
    is the canonical projection onto the direct summand corresponding to $p+I\in E_i$, and 
     \begin{align*}
        r_q:P_k\rightarrow P_j, x\mapsto x(q+I)
    \end{align*}
    is given by right-multiplication given by $q+I$.
    Moreover, $$\ker(f')=\langle \sum_{p+I\in E_i; (p+I, pq''+I)\in E_{ij}'}(p+I, pq''+I)| q'':j\rightarrow i\textup{ path}\rangle.$$
\end{lemma}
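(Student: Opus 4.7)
The strategy is to exploit the explicit $\field$-basis of the 1-cochain space coming from the projective resolution of $\Delta_i^A$ given in the previous lemma, and then to leverage the monomial structure of $A$—specifically, left cancellation of paths—to show that the cocycle condition forces every cocycle to have support inside $E_{ij}'$.

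First I would identify
\[\Hom_A\Big(\bigoplus_{p+I\in E_i}P_{t(p)}^A,\Delta_j^A\Big)=\bigoplus_{p+I\in E_i}e_{t(p)}^A\Delta_j^A,\]
noting that $e_k^A\Delta_j^A$ vanishes for $k>j$ and is otherwise spanned by the vectors $\pi_j(q+I)$ as $q$ ranges over paths $j\to k$ with $\max(q)=j$ (extracted from the description of $\Delta_j^A$ used in Theorem \ref{thm_borel_hereditary}). Consequently the space of 1-cochains has a canonical basis indexed by pairs $(p+I,q+I)$ with $s(q)=j=\max(q)\geq t(q)=t(p)$. The assignment $\tilde f(p,q):=(\delta_{pp'}\pi_j(q+I))_{p'}$—which is exactly $\pi_j\circ r_q\circ\pi_p$ evaluated on generators—defines an injection $\tilde f:\field E_{ij}'\hookrightarrow\{\text{1-cochains}\}$ whose image lies in the elementary basis. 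Well-definedness of $f'$ amounts to checking that $\tilde f(p,q)$ is a cocycle for $(p,q)\in E_{ij}'$, which reduces to showing $\pi_j(q'q+I)=0$ for every $q'\in E_p$: automatic from $\max(q)=j$ when $\max(q')>j$, and equivalent to $q'q\in I$ (i.e., condition (3)) when $\max(q')\leq j$.

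The main technical step, and the principal obstacle, is surjectivity: every cocycle lies in $\im\tilde f$. Given a cocycle written as $y_p=\sum_q\lambda_{(p,q)}\pi_j(q+I)$, the cocycle identity at each $q'\in E_p$ with $\max(q')\leq j$ reads
\[\sum_q\lambda_{(p,q)}\pi_j(q'q+I)=0 \quad\text{in }\Delta_j^A.\]
Here the monomial hypothesis enters crucially: two distinct paths $q_1\neq q_2$ with $q'q_1,q'q_2\notin I$ produce distinct paths $q'q_1\neq q'q_2$ by left cancellation, hence distinct basis vectors $\pi_j(q'q_1+I)\neq\pi_j(q'q_2+I)$ of $\Delta_j^A$. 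Linear independence then forces $\lambda_{(p,q)}=0$ whenever $q'q\notin I$; letting $q'$ range over all witnesses in $E_p$ with $\max(q')\leq j$ proves $\lambda_{(p,q)}\neq 0\Rightarrow (p,q)\in E_{ij}'$, so $(y_p)\in\im\tilde f$. Passage to cohomology then yields surjectivity of $f'$.

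Finally, for the kernel I would compute $\im(d^*)$, where $d^*:e_i^A\Delta_j^A\to\Hom_A(\bigoplus_p P_{t(p)}^A,\Delta_j^A)$ is the coboundary map. For a basis element $\pi_j(q''+I)\in e_i^A\Delta_j^A$ (with $q'':j\to i$ a path satisfying $\max(q'')=j$), its image $d^*(\pi_j(q''+I))$ has $p$-component $\pi_j(pq''+I)$, which is non-zero precisely when $t(p)\leq j$ and $pq''\notin I$. In that case $(p,pq'')$ lies automatically in $E_{ij}'$, since condition (3) is inherited: $q'p\in I$ implies $q'(pq'')=(q'p)q''\in I$. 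Under $\tilde f^{-1}$, this coboundary corresponds to $\sum_{p:(p,pq'')\in E_{ij}'}(p,pq'')$, and paths $q''$ with $\max(q'')>j$ contribute only the empty sum. Since $\ker(f')=\tilde f^{-1}(\im d^*)$, this yields the stated presentation of $\ker f'$.
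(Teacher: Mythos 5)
Your proof is correct, and it reaches the same combinatorial core as the paper's argument — the cocycle condition is matched with condition (3) in the definition of $E_{ij}'$ using left cancellation of paths in a monomial algebra, and the coboundaries are matched with the sums indexed by paths $q''\colon j\to i$ — but it gets there through a different homological model. The paper computes $\Ext^1_A(\Delta_i^A,\Delta_j^A)$ via Lemma \ref{lemma_extensions}, i.e.\ as chain maps $P^\cdot(\Delta_i^A)\to P^\cdot(\Delta_j^A)[1]$ modulo homotopy; its $1$-cochains are therefore maps $\bigoplus_{p}P_{t(p)}\to P_j$ indexed by \emph{all} paths $q\colon j\to t(p)$, and it has to account for a second family of coboundaries $r_{q'}\circ r_{q''}\circ\pi_p$ with $q'\in E_j$ (those factoring through $\eta_{>j}P_j$); the displayed decomposition of $r_{q''}\circ\sum_p r_{p}$ into an $E_{ij}'$-part and a part with $\max(pq'')>j$ is precisely the bookkeeping needed to discard that family. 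By applying $\Hom_A(-,\Delta_j^A)$ to the projective resolution of $\Delta_i^A$ instead, you quotient out $\eta_{>j}P_j$ from the outset: the cochain space acquires a basis indexed by pairs $(p,q)$ with $\max(q)=j$, the cocycle space is \emph{exactly} $\field E_{ij}'$ (so surjectivity of $f'$ is immediate, with no need to argue that cocycles supported on $\max(q)>j$ are coboundaries), and only the coboundaries coming from $e_i^A\Delta_j^A$ remain, which you correctly identify with the stated generators of $\ker(f')$ after checking that condition (3) is inherited by $(p,pq'')$. The two routes are of comparable length, but yours isolates the identification $Z^1\cong\field E_{ij}'$ as a clean intermediate statement, which arguably explains better \emph{why} $E_{ij}'$ is the right indexing set; the paper's version keeps everything at the level of maps between projectives, which is the form it reuses when composing with $\Ext^1_A(\Delta_i^A,\pi_j)$ in Lemma \ref{lemma_regularity}, though translating between the two models there is immediate.
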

\begin{proof}
    First, note that a basis of $\Hom_A(\bigoplus_{p\in E_i}P_{t(p)}, P_j)$ is given by morphisms of the form $r_q\circ \pi_p$, where $q+I$ is any path from $j$ to $t(p)$.\\
    Secondly, a morphism $r_q\circ \pi_p$ gives rise to a cocycle in $\Hom_A^{\cdot}(P^\cdot(\Delta_i^A), P^\cdot(\Delta_j^A))$ if and only if $$r_{q+I}\circ \pi_p(\ker(\sum_{p'+I\in E_i} r_{p'+I}))=\im (r_{q+I}\circ \pi_p\circ \sum_{q'+I\in E_p}r_{q'+I})\subseteq \ker(\pi_j)=\eta_{>j}P_j,$$ where $\eta_j P_j:=\sum_{\varphi: P_k\rightarrow P_j, k>j}\im(\varphi)$, which, since $A$ is monomial, is the case if and only if $\im(r_{q+I}\circ \pi_p\circ r_{q'+I})\subseteq \eta_{>j}P_j$ for all $q'+I\in E_p$, i.e. for all paths $q'+I\neq 0$ with $s(q')=t(p)$ and $q'p\in I$. If $\max(q')> j$, then the above always holds. On the other hand, if $\max(q')\leq j$, then, since $A$ is monomial, the above holds if and only if $q'q\in I$. Overall, we can conclude that  $r_q\circ \pi_p$ gives rise to a cocycle if and only if $q'p\in I\Rightarrow q'q\in I$ for all paths $q'+I$ in $A$ with $s(q')=t(p)$ and $\max(q')\leq j$.\\
    Finally, by Lemma \ref{lemma_extensions}, a linear generating set of the coboundaries is given by all maps $r_{q'+I}\circ r_{q''+I}\circ \pi_p$, where $p\in E_i$, $q'\in E_j$ and $q''+I$ is a path in $A$ such that $q''q'\notin I$ and $t(q'')=t(p)$; as well as by all maps $r_{q''+I}\circ \sum_{p+I\in E_i}r_{p+I}$, where $q''$ is any path from $j$ to $i$.\\
    Note that for any path $p+I\in E_i$ and $q:j\rightarrow i$, the pair $(p+I, pq''+I)$ is in $E_{ij}$ if and only if $\max(pq'')=j$ and $pq''\notin I$. Thus,  \begin{align*}
        r_{q''+I}\circ \sum_{p+I\in E_i}r_{p+I}=\sum_{p+I\in E_i; (p+I, pq''+I)\in E_{ij}'}r_{pq''+I}+ \sum_{p+I\in E_i, \max(pq'')>j}r_{pq''}.
    \end{align*}
    Hence $f'$ is surjective with kernel given by the span of the elements  $$\sum_{p+I\in E_i; (p+I, pq''+I)\in E_{ij}'}(p+I, pq''+I)$$ for $q'':j\rightarrow i$ with $j>i$.
\end{proof}
\begin{example}
     Let $Q$ be the quiver
\[\begin{tikzcd}[ampersand replacement=\&]
	5 \& 1 \& 3 \\
	\& 2 \\
	\& 4,
	\arrow["\beta"', from=1-1, to=2-2]
	\arrow["\alpha"', from=1-2, to=2-2]
	\arrow["{\beta'}", from=1-3, to=2-2]
	\arrow["\gamma"', from=2-2, to=3-2]
\end{tikzcd}\]
let $I$ be the ideal generated by $\alpha\gamma$ and let $A=\field Q/I$.
Then the pair $(\alpha+I, \beta'+I)$ gives rise to an extension between $\Delta_1^A=L_1^A$ and $\Delta_3^A=\begin{pmatrix}
    3 \\ 2
\end{pmatrix}$ given by 
\[\begin{tikzcd}[ampersand replacement=\&]
	1 \&\& 3 \\
	\& 2
	\arrow["\alpha"', from=1-1, to=2-2]
	\arrow["{\beta'}", from=1-3, to=2-2]
\end{tikzcd}\]
and, accordingly, we have $(\alpha+I, \beta'+I)\in E_{1,3}'$.
On the other hand, the fact that  $\max(\gamma)=3\leq 5=s(\beta)$ and $\gamma\beta\notin I$, while  $\gamma\alpha\in I$, prevents a module with Loewy diagram
\[\begin{tikzcd}[ampersand replacement=\&]
	5 \&\& 1 \\
	\& 2 \\
	\& 4
	\arrow["\beta", from=1-1, to=2-2]
	\arrow["\alpha"', from=1-3, to=2-2]
	\arrow["\gamma", from=2-2, to=3-2]
\end{tikzcd}\]
from being an extension between $\Delta_1^A$ and $\Delta_5^A$, so that accordingly, $(\alpha+I, \beta+I)\notin E'_{1,5}$.
\end{example}
We are now ready to give a criterion for the existence of a regular exact Borel subalgebra of $A$ in terms of pairs of paths:
\begin{lemma}\label{lemma_regularity}
    $A$ has a regular exact Borel subalgebra if and only if
    \begin{align*}
        E_{ij}'=\{& (p+I, e_j^A)| p+I\in E_{ij}\}\\
        \cup\{&(p+I, pq''+I)| p\in E_i, q'':j\rightarrow i, s(q'')=\max(pq'') \textup{ s.t. }pq''\notin I\\&\textup{ and }p'q''\in I\textup{ for all }p'+I\in E_i\setminus\{p+I\} \textup{ with }\max(p'q'')=j\}
    \end{align*}
    for all $1\leq i< j\leq n$.
\end{lemma}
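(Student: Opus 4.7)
The approach is to apply \cite[Theorem D, point 7]{Conde}, which asserts that $A$ admits a regular exact Borel subalgebra if and only if, for every $1\leq i<j\leq n$, the natural map $\Phi_{ij}\colon\Ext^1_A(\Delta_i^A,\Delta_j^A)\to\Ext^1_A(\Delta_i^A,L_j^A)$ induced by the surjection $\Delta_j^A\twoheadrightarrow L_j^A$ is an isomorphism. Using the surjection $f'\colon\field E'_{ij}\twoheadrightarrow\Ext^1_A(\Delta_i^A,\Delta_j^A)$ with kernel $K$ and the isomorphism $f\colon\field E_{ij}\xrightarrow{\sim}\Ext^1_A(\Delta_i^A,L_j^A)$ from the previous two lemmas, I would reduce the question to analyzing an explicit linear map $\widetilde\Phi\colon\field E'_{ij}\to\field E_{ij}$ whose factorization through the relevant quotients recovers $\Phi_{ij}$.

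To pin down $\widetilde\Phi$, I would note that $f'([(p,q)])$ is represented by the cocycle $r_q\circ\pi_p$, and its image under $\Phi_{ij}$ is represented by the post-composition with $P_j\twoheadrightarrow L_j$. Since $r_q$ has image $A(q+I)\subseteq\rad(P_j)$ whenever $q$ is non-trivial, this cocycle becomes null-homotopic for $L_j$ unless $q=e_j^A$ and $p\in E_{ij}$, in which case it recovers the basis element $[\pi_p]$. Hence $\widetilde\Phi$ sends $(p,e_j^A)\mapsto p$ for $p\in E_{ij}$ and $(p,q)\mapsto 0$ whenever $q$ is non-trivial. Bijectivity of $\Phi_{ij}$ then splits into \textbf{(a)} surjectivity, i.e.\ $(p,e_j^A)\in E'_{ij}$ for every $p\in E_{ij}$, and \textbf{(b)} injectivity, i.e.\ every $(p,q)\in E'_{ij}$ with $q$ non-trivial lies in $K$.

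Condition (a) is precisely the first subset in the claimed description of $E'_{ij}$. For condition (b), the key observation is that a pair $(p,q)$ can appear as a term of the generator $S_{q''}:=\sum_{p'\in E_i;\,(p',p'q'')\in E'_{ij}}(p',p'q'')$ only when $q=pq''$, and given $(p,q)$ the path $q''$ is then uniquely determined by left-cancellation in the quiver. Consequently each pair of $E'_{ij}$ occurs in at most one generator $S_{q''}$, so comparing coefficients in $(p,q)=\sum_{q''}c_{q''}S_{q''}$ forces $c_{q''_0}=1$ for the unique $q''_0$ with $q=pq''_0$, and then the vanishing on the left-hand side of the coefficients of the auxiliary terms $(p',p'q''_0)$ with $p'\neq p$ forces all these terms to be absent from $S_{q''_0}$ as well. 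This is exactly the condition that $p$ is the unique element of $E_i$ with $(p',p'q''_0)\in E'_{ij}$; since Condition 3 of the definition of $E'_{ij}$ is automatic for pairs of the form $(p',p'q''_0)$ (because $q'p'\in I$ implies $q'p'q''_0=(q'p')q''_0\in I$ by monomiality), this in turn reduces to the condition $p'q''_0\in I$ for all $p'\in E_i\setminus\{p\}$ with $\max(p'q''_0)=j$, matching the second subset in the claim.

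The main technical obstacle is the linear-algebra step in (b): one must genuinely verify that the generators $S_{q''}$ of $K$ have pairwise disjoint supports inside $\field E'_{ij}$, so that the uniqueness of $p$ appearing in $S_{q''_0}$ is both necessary and sufficient for $(p,pq''_0)\in K$. This rests on the unique-left-factorization of paths in $\field Q/I$, and once disentangled the remainder of the argument is bookkeeping: combining (a) with the resulting characterization of the non-trivial-$q$ elements of $E'_{ij}$ yields the claimed equality of sets, and equivalence with the existence of a regular exact Borel subalgebra then follows from Conde's criterion.
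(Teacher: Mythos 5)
Your proposal is correct and follows essentially the same route as the paper's proof: both apply Conde's criterion, observe that the induced map sends $(p+I,e_j^A)\mapsto f(p+I)$ and annihilates every pair with non-trivial second component, and then identify $E'_{ij}\cap\ker(f')$ using the pairwise-disjoint supports of the kernel generators $\sum_{p'}(p'+I,p'q''+I)$. The details you leave implicit (uniqueness of $q''$ with $q=pq''$ by unique factorization of paths, and the automatic validity of Condition 3 for pairs of the form $(p',p'q'')$) are precisely the points the paper also checks.
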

\begin{proof}
    By \cite[Theorem D]{Conde}, $A$ admits a regular exact Borel subalgebra if and only if the map $\Ext^1_A(\Delta_i^A,\pi_j):\Ext^1_A(\Delta_i^A, \Delta_j^A)\rightarrow \Ext^1_A(\Delta_i^A, L_j^A)$ is an isomorphism for all $1\leq i\leq j\leq n$.
    Now note that $\Ext^1_A(\Delta_i^A,\pi_j)(f'((p+I, q+I))=0$ if $q+I\neq e_j^A$ and  $\Ext^1_A(\Delta_i^A,\pi_j)(f'((p+I, e_j^A))=f(p+I)$ else.\\
    Thus, $\Ext^1_A(\Delta_i^A,\pi_j)$ is surjective if and only if $\{ (p+I, e_j^A)| p+I\in E_{ij}\}\subseteq E_{ij}'$ and injective if and only if $f'((p+I, q+I))=0$ for all $(p+I, q+I)\in E_{ij}'$ with $q+I\neq e_j^A$.\\
    Thus, it remains to be shown that the intersection $E_{ij}'\cap \ker(f')$ is the set of all pairs $(p+I, pq''+I)$, where $q'':j\rightarrow i$ is a path with $s(q'')=\max(pq'')$ and $p+I$ is a path in $E_i$ such that $p'q''\in I$ for all $p'+I\in E_i\setminus\{p+I\}$ with $\max(p'q'')=j$.\\
    Suppose $(p+I, q+I)\in \ker(f')$ for some $(p+I, q+I)\in E_{ij}'$. Then, $\max(q)=j=s(q)$ and there are pairwise distinct paths $q''_1, \dots, q''_L:j\rightarrow i$ and elements $\lambda_{1}\dots, \lambda_l\in \field\setminus\{0\}$ such that 
    \begin{align*}
        (p+I, q+I)=\sum_{l=1}^L \sum_{p'+I\in E_i; (p'+I, p'q''_l+I)\in E_{ij}'}\lambda_{l}(p'+I, p'q''_l+I).
    \end{align*}
    Since $A$ is monomial and $p'+I\neq 0\neq q'+I$ for all $(p'+I, q'+I)\in E_{ij}'$, we have that $(p'+I, p'q''_l+I)=(p''+I, p''q''_k+I)\in E_{ij}'$ if and only if $p'=p''$ and $q''_l=q''_k$.
    Hence, the sum consists of only one summand; in particular, $L=1$, and for $q'':=q_1'':j\rightarrow i$ we have
     \begin{align*}
        (p+I, q+I)=\sum_{p'+I\in E_i; (p'+I, p'q''+I)\in E_{ij}'}\lambda_{1}(p'+I, p'q''+I)=\lambda_1 (p+I, pq''+I)
    \end{align*}
    In particular, $\lambda_1=1$,
     $q=pq''$ and $(p'+I, p'q''+I)\notin E_{ij}'$ for all $p'+I\in E_i\setminus\{p+I\}$. Since $(p+I, pq''+I)\in E_{ij}'$, $s(q'')=\max(pq'')>t(q'')=s(p)$ and $pq''\notin I$. Moreover, we can see by definition of $E_{ij}'$ that $(p'+I, p'q''+I)\notin E_{ij}'$ for $p'+I\in E_i$ if and only if $p'q''\in I$ or $\max(p'q'')>j$. Thus, $(p+I, q+I)=(p+I, pq''+I)$ where $q'':j\rightarrow i$ with $s(q'')=\max(pq'')>t(q'')$, $pq''\notin I$ and $p'q''\in I$ for all $p'+I\in E_i\setminus\{p+I\}$ with $\max(p'q'')=j$.\\
    On the other hand, suppose that $q'':j\rightarrow i$ is a path such that\\ $s(q'')=\max(pq'')>t(q'')=s(p)$, $pq''\notin I$ and $p'q''\in I$  for all $p'+I\in E_i\setminus\{p+I\}$ with $\max(p'q'')=j$. Then $(p+I, pq''+I)\in E_{ij}'$, and \begin{align*}
        (p+I, pq''+I)=\sum_{p'+I\in E_i; (p'+I, p'q''+I)\in E_{ij}'}(p'+I, p'q''+I),
    \end{align*}
    so that $(p+I, pq''+I)\in\ker(f')$.
\end{proof}
In fact, one of the inclusions in the lemma above always holds trivially:
\begin{lemma}
    Let $1\leq i<j\leq n$ and $p+I\in E_{ij}$. Then $(p+I, e_j^A)\in E_{ij}'$.
\end{lemma}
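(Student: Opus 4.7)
The plan is to verify that the pair $(p+I,\, e_j^A)$ satisfies the three defining conditions of $E_{ij}'$, taking $k := j$. Conditions (1) and (2) will be immediate: $p+I \in E_{ij} = E_{ik}$ by hypothesis, and $s(e_j^A) = j = \max(e_j^A) \geq t(e_j^A) = j$.

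The substantive step is condition (3). I would need to show that for every path $q'+I$ in $A$ with $s(q') = j$ and $\max(q') \leq j$, the implication $q'p \in I \Rightarrow q'\cdot e_j^A \in I$ holds. Since $q'\cdot e_j^A = q'$, and since $q'+I$ is by assumption a path in $A$ (so $q' \notin I$), the conclusion of the implication is always false; hence it will suffice to prove $q'p \notin I$ for every such $q'$.

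My strategy for $q'p \notin I$ is to read the nonvanishing off from the Reedy decomposition. First, $p+I$ lies in $B_{\min}$ by definition of right-minimal direction-preserving. Next, $q'+I$ will lie in $C_{\min}^{\op}$: since $j = s(q')$ is itself a vertex of $q'$, the bound $\max(q') \leq j$ forces $s(q') = \max(q') = j$, and then Corollary \ref{corollary_span} together with the description of $C_{\min}^{\op}$ recorded in the proof of Theorem \ref{thm_reedy} places $q'+I$ in $C_{\min}^{\op}$. Because $s(q') = j = t(p)$, the element $q' \otimes_L p$ is a nonzero basis vector of $C_{\min}^{\op} \otimes_L B_{\min}$, and the Reedy isomorphism from Theorem \ref{thm_reedy} sends it to $q'p + I \in A$, which is therefore nonzero.

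I do not expect a genuine obstacle; the lemma is essentially a repackaging of the non-vanishing of products across the Reedy factorization $C_{\min}^{\op} \otimes_L B_{\min} \cong A$. The only conceptual step is to notice that the hypothesis ``$q'+I$ is a path in $A$'' forces $q' \notin I$, which converts the implication in condition (3) into the concrete non-vanishing claim $q'p \notin I$ handled by the Reedy isomorphism.
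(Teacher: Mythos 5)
Your proof is correct, but it takes a genuinely different route from the paper. You both reduce the lemma to the same concrete claim --- that $q'p\notin I$ for every path $q'+I$ with $s(q')=\max(q')=j$ --- and your reduction (the conclusion $q'e_j^A\in I$ of condition (3) is always false because a path in $A$ is by convention not in $I$) is exactly right. The difference is in how the non-vanishing is established. The paper considers right multiplication $r_{p+I}\colon P_j\to P_i$ and the induced map $\overline{r_{p+I}}\colon\Delta_j\to A\sum_{k\geq j}e_k^AAe_i^A/A\sum_{k>j}e_k^AAe_i^A$; since the target is a direct sum of copies of $\Delta_j$ by \cite[Lemma 1.4]{DlabRingel} and $\End_A(\Delta_j)\cong\field$, this map is injective or zero, and injectivity applied to the basis of $\Delta_j$ indexed by the paths $q'$ yields $q'p\notin I$ (in fact the slightly stronger statement $q'p\notin A\sum_{k>j}e_k^AAe_i^A$). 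You instead observe that $p+I\in B_{\min}$, that $\max(q')=s(q')$ together with Corollary \ref{corollary_span} places $q'+I$ in $C_{\min}^{\op}$, and that $q'\otimes p$ is a nonzero basis vector of $C_{\min}^{\op}e_j^A\otimes_\field e_j^AB_{\min}\subseteq C_{\min}^{\op}\otimes_LB_{\min}$, so injectivity of the Reedy multiplication map of Theorem \ref{thm_reedy} gives $q'p+I\neq 0$. Your argument stays entirely inside the paper's own combinatorial machinery and avoids the external appeal to the heredity-chain structure, at the cost of invoking the heavier Theorem \ref{thm_reedy}; the paper's argument is independent of the Reedy decomposition and extracts a bit more information. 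Both ultimately rest on the standing assumption that $(A,\leq_A)$ is quasi-hereditary, so there is no circularity in either version.
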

\begin{proof}
    Consider the map $r_{p+I}: P_j\rightarrow P_i, x\mapsto x(p+I)$. This gives rise to a map
    \begin{align*}
        \overline{r_{p+I}}:&\Delta_j\rightarrow A\sum_{k=j}^ne_k^AAe_i^A/A\sum_{k=j+1}^ne_k^AAe_i^A,\\
        &x+A\sum_{k=j+1}^ne_k^AAe_j^A\mapsto x(p+I)+A\sum_{k=j+1}^ne_k^AAe_i^A.
    \end{align*}
    Since $A$ is quasi-hereditary, $A\sum_{k=j}^ne_k^AAe_i^A/A\sum_{k=j+1}^ne_k^AAe_i^A$ is isomorphic to a direct sum of copies of $\Delta_j$ by \cite[Lemma 1.4]{DlabRingel}.\\ Because $\End_A(\Delta_j)\cong \field$, this implies that $\overline{r_{p+I}}$ is either injective or zero. Since $p+I\neq 0$, it is not zero, so that it is injective.\\
    Recall that a basis of $\Delta_j$ is given by the elements $q'+I+A\sum_{k=j+1}^ne_k^AAe_j^A$, where $q'+I$ is a non-zero path in $A$ with $\max(q')=s(q')=j$. Thus, $q'p\notin I$ for all such $q'$, so that $(p+I, e_j^A)\in E_{ij}'$.
\end{proof}
Since we clearly have $(p+I, pq''+I)\in E_{ij}'$ for every path $q''+I$ in $A$ with $j=s(q'')=\max(pq'')>t(q'')=s(p)=i$ and $pq''\notin I$, it follows that, in general for $1\leq i<j\leq n$,
\begin{align*}
    E_{ij}'\supseteq \{& (p+I, e_j^A)| p+I\in E_{ij}\}\\
        \cup\{&(p+I, pq''+I)| p\in E_i, q'':j\rightarrow i, s(q'')=\max(pq'') \textup{ s.t. }pq''\notin I\\&\textup{ and }p'q''\in I\textup{ for all }p'+I\in E_i\setminus\{p+I\} \textup{ with }\max(p'q'')=j\}.
\end{align*}
Thus, we obtain the following corollary:
\begin{corollary}\label{corollary_criterion}
    $A$ has a regular exact Borel subalgebra if and only if for every pair $(p+I, q+I)\in E_{ij}'$ either $q+I=e_j^A$ or $q+I=pq''+I$ for some $q'':j\rightarrow i$ such that $p'q''\in I$ for all $p'+I\in E_i\setminus \{p+I\}$ with $\max(p'q'')=j$.
\end{corollary}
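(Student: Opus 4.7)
The plan is to derive this as an immediate consequence of Lemma \ref{lemma_regularity}, by combining it with the $\supseteq$ inclusion noted just above the corollary. Lemma \ref{lemma_regularity} characterizes existence of a regular exact Borel subalgebra as an equality of $E_{ij}'$ with the specified two-part union for all $1\leq i<j\leq n$, so the task is to recognize that, once $\supseteq$ is known to hold unconditionally, the remaining content is precisely the $\subseteq$ containment, which is what the corollary states.

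First I would verify that $\supseteq$ holds without any regularity hypothesis. The first piece $\{(p+I,e_j^A)\mid p+I\in E_{ij}\}$ lies in $E_{ij}'$ by the lemma immediately preceding the corollary, whose proof rests on $A\sum_{k\geq j}e_k^AAe_i^A/A\sum_{k>j}e_k^AAe_i^A$ being a direct sum of copies of $\Delta_j$ together with $\End_A(\Delta_j)\cong\field$. For the second piece, I would check directly from the definition of $E_{ij}'$ that any pair $(p+I,pq''+I)$ of the described shape satisfies the three defining conditions with $k:=t(p)$: condition (1) is $p+I\in E_{ik}$ by assumption; condition (2) reads $s(pq'')=j=\max(pq'')\geq t(pq'')=k$, which is built into the hypotheses $s(q'')=\max(pq'')=j$; and condition (3) is vacuous, since $q'p\in I$ trivially forces $q'(pq'')\in I$.

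With $\supseteq$ in hand, Lemma \ref{lemma_regularity}'s equality reduces to the inclusion $E_{ij}'\subseteq\{(p+I,e_j^A)\mid p+I\in E_{ij}\}\cup\{(p+I,pq''+I)\mid\ldots\}$. Spelled out, this is exactly the assertion that every $(p+I,q+I)\in E_{ij}'$ either has $q+I=e_j^A$, or has $q+I=pq''+I$ for some $q'':j\to i$ with $p'q''\in I$ for all $p'+I\in E_i\setminus\{p+I\}$ satisfying $\max(p'q'')=j$; the side constraints $s(q)=j=\max(q)$ and $q\notin I$ featuring in the lemma are automatic from the membership $(p+I,q+I)\in E_{ij}'$, and the constraint $p+I\in E_{ij}$ in the first case follows because $q=e_j^A$ forces $k=j$ in the definition of $E_{ij}'$. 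Thus the corollary follows. There is no substantial obstacle: the analytical work has already been done in Lemma \ref{lemma_regularity}, and the corollary is only a repackaging of its statement under the unconditional inclusion $\supseteq$.
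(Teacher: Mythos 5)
Your proposal is correct and takes essentially the same route as the paper, which states the corollary as an immediate consequence of Lemma \ref{lemma_regularity} combined with the unconditional inclusion $\supseteq$ established in the preceding lemma and the paragraph directly above the corollary. Your explicit check of conditions (1)--(3) for the pairs $(p+I,pq''+I)$, and your remark that the side constraints $s(q'')=\max(pq'')=j$ and $pq''\notin I$ are automatic from membership in $E_{ij}'$, simply spell out the step the paper treats as clear.
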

The criterion above is unfortunately still a bit complicated; in the following we give a simplification in the case that $A$ is a hereditary algebra.
\begin{theorem}
\label{proposition_regularity_hereditary}
    Suppose $I=(0)$. Then $A$ admits a regular exact Borel subalgebra if and only if the following two conditions hold:
    \begin{enumerate}
        \item \label{condition1} For all paths $p:i\rightarrow k$ and $q:j\rightarrow k$ with $\max(p)=k>i$ and $j>k$ there is a path $r:j\rightarrow i$ such that $q=pr$.
        \item \label{condition2} For any path $q:j\rightarrow i$ with $j>i$ there is at most one right-minimal direction preserving path $p$ starting in $i$ such that $t(p)< j$.
    \end{enumerate}
\end{theorem}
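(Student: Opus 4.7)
The plan is to specialize the criterion of Corollary \ref{corollary_criterion} to the hereditary setting and match it to conditions (1) and (2). First I would observe that since $I = (0)$, any product $p'q''$ with $s(p') = i = t(q'')$ is a nonzero path, so the clause $p'q'' \in I$ in Corollary \ref{corollary_criterion} is never satisfied; the condition must therefore hold \emph{vacuously}, i.e.\ no such $p'$ may exist. Secondly, since $s(q'') = j$ lies in $q''$, one has $\max(q'') = j$, so $\max(p'q'') = \max(t(p'), j)$ and $\max(p'q'') = j$ reduces to $t(p') \leq j$. Finally, $\field Q$ is finite-dimensional only when $Q$ is acyclic, and in that case the presence of a path $q'': j \to i$ precludes any $p' \in E_i$ with $t(p') = j$ (else $i \to j \to i$ would be a cycle). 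Consequently the criterion becomes: for every $(p, q) \in E_{ij}'$ with $q \neq e_j^A$, the path $q$ factors as $q = pq''$ with $q'': j \to i$, and $p$ is the unique element of $E_i$ with $t(p') < j$.

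For the equivalence of the factorization clause with condition (1), one direction is nearly immediate: given $(p, q) \in E_{ij}'$ with $q \neq e_j^A$, acyclicity forces $k := t(p) < j$, so condition (1) applied to $p \in E_{ik}$ and $q: j \to k$ supplies $r: j \to i$ with $q = pr$, and we set $q'' = r$. For the reverse direction I would invoke Corollary \ref{corollary_span} to decompose an arbitrary $p: i \to k$ with $\max(p) = k > i$ as a product $p = p_1 p_2 \cdots p_L$ of right-minimal direction-preserving paths with $s(p_1) =: i_1, \ldots, s(p_L) = i$ and $t(p_1) = k$. Then I would apply the factorization clause successively to the pairs $(p_l, q_{l-1}'')$ to telescope
\begin{align*}
q = p_1 q_1'' = p_1 p_2 q_2'' = \cdots = p_1 p_2 \cdots p_L q_L'' = p r,
\end{align*}
with $r := q_L'': j \to i$. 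For $\max(q) > j$ I would first split $q = q^{(1)} q^{(2)}$ at its first visit to $M := \max(q)$ and apply the argument to $q^{(1)}: M \to k$ (which has $\max(q^{(1)}) = M$), then set $r = r^{(1)} q^{(2)}$.

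For the equivalence of the uniqueness clause with condition (2), one direction follows by applying condition (2) to the specific path $q'': j \to i$ produced by the factorization, which rules out any $p' \in E_i \setminus \{p\}$ with $t(p') < j$. For the converse, suppose $q: j \to i$ is a path and $p, p' \in E_i$ are distinct with $t(p), t(p') < j$. Let $M := \max(q) \geq j$ and let $q^*: M \to i$ be the subpath of $q$ beginning at its first visit to $M$, so that $\max(q^*) = M$. Then $(p, p q^*) \in E_{i, M}'$ and $p q^* \neq e_M^A$, so the uniqueness clause forces $p$ to be the only element of $E_i$ with target $< M$; since $t(p') < j \leq M$, this contradicts the existence of $p'$.

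The main technical obstacle is the asymmetry between the simplified criterion, which pertains to pairs at a prescribed $j$ and carries $\max(q'') = j$ as an implicit constraint, and the conditions (1)--(2), which allow $q$ with $\max(q) > j$. In both places the reduction rests on the same acyclicity trick of splitting $q$ at its maximum; this same device also eliminates the discrepancy between the $t(p') \leq j$ appearing in the simplification and the $t(p') < j$ appearing in condition (2).
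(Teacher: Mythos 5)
Your proposal is correct and follows essentially the same route as the paper: both directions reduce to Corollary \ref{corollary_criterion} specialised to $I=(0)$, condition \eqref{condition1} is obtained by iterating the factorization along the right-minimal direction-preserving decomposition of $p$ (the paper phrases this as an induction on $l(p)$, splitting at the maximal inner vertex, which is the same telescoping), and condition \eqref{condition2} comes from pairs $(p,pq)\in E_{ij}'$ after normalizing $q$ so that $\max(q)=s(q)$. The only nitpick is your justification of $\max(q'')=j$: it follows not merely from $s(q'')=j$ being a vertex of $q''$ (which only gives $\max(q'')\geq j$) but from $q''$ being a right subpath of a $q$ with $\max(q)=j$ — which is exactly what holds in the contexts where you use it.
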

\begin{proof}
    Suppose $A$ admits a regular exact Borel subalgebra. Let us first show \eqref{condition1}.
    Suppose $p$ and $q$ are as in \eqref{condition1} above.
    Making $q$ shorter if necessary, we can assume without loss of generality that $j=\max(q)>k$. We proceed by induction on the length $l(p)$ of $p$. If $l(p)=1$ then $p\in E_i$, so that $(p, q)\in E_{ij}'$ and it follows from Lemma \ref{lemma_regularity} that $p$ is a subpath of $q$.\\
    Thus let us assume $l(p)>1$. If $p$ is right-mimimal direction-preserving, then, as in the length one case,  $(p, q)\in E_{ij}'$ and it follows from Lemma \ref{lemma_regularity} that $p$ is a subpath of $q$.\\
    Therefore, let us assume that $p$ is not right-minimal direction-preserving. Let $k'$ be the maximal inner vertex of $p$, so that $k'>i$. Let us write $p=p'p''$ where $p'$ and $p''$ are non-trivial paths with $s(p')=t(p'')=k'$.  Then $p'$ has no inner vertex greater than $k'$. Moreover, since $p'$ is non-trivial and $Q$ has no loops, $k'=s(p')\neq t(p')=k=\max(p')$. Therefore, $p'\in E_{k'}$ and $(p', q)\in E_{k'j}'$, so that it follows from Lemma \ref{lemma_regularity} that $p'$ is a subpath of $q$, that is, there is $r':j\rightarrow k'$ such that  $q=p'r'$: 
\[\begin{tikzcd}[ampersand replacement=\&]
	\&\& j \\
	k \& {s(p')} \\
	\&\& i
	\arrow["{r'}"', from=1-3, to=2-2]
	\arrow["{\exists \textup{ path factorization }r}", dashed, from=1-3, to=3-3]
	\arrow["{p'}"', from=2-2, to=2-1]
	\arrow["{p''}"', from=3-3, to=2-2]
\end{tikzcd}\]
Moreover, $r':j\rightarrow s(p')$ is a path with $j=\max(r')>k$ and $p'':i\rightarrow k'$ is a path with $\max(p'')=k'=t(p'')$, so that by induction hypothesis there is a path $r$ such that $r'=p''r$, and hence $q=p'r'=p'p''r=pr$.\\
Now, let us show \eqref{condition2}. Suppose $q:j\rightarrow i$ is any path in $Q$ with $j>i$ and assume $p, p'\in E_i$ with $t(p), t(p')< j$. Making $q$ shorter if necessary, we can assume that $\max(q)=j$ and $j>t(p), t(p')$. Then $(p, pq), (p', p'q)\in E_{ij}'$, so that by Corollary \ref{corollary_criterion}, $p=p'$.\\
On the other hand, suppose that \eqref{condition1} and \eqref{condition2} hold, that is, suppose that
\begin{enumerate}
        \item for all paths $p:i\rightarrow k$ and $q:j\rightarrow k$ with $\max(p)=k>i$ and $j>k$ there is a path $r:j\rightarrow i$ such that $q=pr$; and
        \item for any path $q:j\rightarrow i$ with $j>i$ there is at most one right-minimal direction preserving path $p$ starting in $i$ such that $t(p)< j$.
    \end{enumerate}
    Suppose $(p,q)\in E_{ij}'$ with $q$ non-trivial. Then by \eqref{condition1},  $q=pq''$ for some $q''$, and, by \eqref{condition2} we have that for any $p'\in E_i\setminus \{p_i\}$, $t(p')> j$, so that $\max(p'q'')>j$.
\end{proof}
The above criterion is much easier to check. Moreover, it is relatively easy to see how it behaves under certain modifications. For example, one can directly see the following:
\begin{remark}\label{remark_subquiver}
    Suppose $Q'$ is a subquiver of $Q$. If $Q$ fulfills the criteria from the above proposition, then so does $Q'$. Therefore, if $A=\field Q$ admits a regular exact Borel subalgebra, so does $A':=\field Q'$. In particular, if $A$ admits a regular exact Borel subalgebra, so does $A/AeA$, where $e=\sum_{i\in S}e_i$ for some $S\subseteq \{1, \dots, n\}$.\\
    A similar result was obtained in a much more general setting by Conde and Külshammer in \cite[Theorem 5.9]{idempotents}. However, the statement here is not directly implied by \cite[Theorem 5.9]{idempotents}, which can be seen e.g. by considering the idempotent $e_1$ in Example \ref{example_regular_path}(1).
\end{remark}
We also observe the following:
\begin{corollary}\label{corollary_injectiveuniserial}
    Suppose $I=(0)$, and assume that $A$ admits a regular exact Borel subalgebra. Then $\nabla_i^A$ is injective or uniserial for all $1\leq i\leq n$.
\end{corollary}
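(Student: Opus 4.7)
My plan is to use only condition \eqref{condition1} of Theorem \ref{proposition_regularity_hereditary} (condition \eqref{condition2} will not be needed) and to argue that the basis of $\nabla_i^A$ must sit inside the set of left factors of a single fixed path $q\colon j\to i$ with $j>i$; such a set is linearly ordered by ``being a left factor of'', so uniseriality will be automatic.

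Fix $1\leq i\leq n$. First I would observe that $\nabla_i^A = I_i^A$ (and hence $\nabla_i^A$ is injective) exactly when no path $q\colon j\to i$ with $j>i$ exists in $Q$: if some path $p$ ending at $i$ visits a vertex $>i$, then the subpath from the last such vertex it visits to $i$ provides such a $q$. So I may assume such a $q$ exists and fix one.

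Next, I would recall from the proof of Theorem \ref{thm_borel_hereditary} that $\nabla_i^A$ has $\field$-basis $\{f_p\}$ indexed by paths $p$ in $Q$ with $t(p)=i$ and $\max(p)=i$, where $f_p=(p)^*_{|e_i^A A}$, and that the left $A$-action satisfies $a\cdot f_p = f_{p/a}$ whenever $p=(p/a)\cdot a$ and $0$ otherwise. For a non-trivial such $p$, acyclicity of $Q$ forces $s(p)<i$, so $p$ is direction-preserving with $\max(p)=i>s(p)$, and condition \eqref{condition1} applied to $p$ and $q$ yields $q = p\cdot r$ for some path $r$. Hence every non-trivial basis element of $\nabla_i^A$ corresponds to a left factor of the fixed path $q$.

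To finish, writing $q=\alpha_l\cdots\alpha_1$ with vertices $v_0=j, v_1,\ldots,v_l=i$ and $q_m:=\alpha_l\cdots\alpha_{l-m+1}$, the basis of $\nabla_i^A$ reduces to $\{f_{q_m}: 0\leq m\leq c\}$, where $c$ is the largest index such that $v_{l-c},\ldots,v_l\leq i$. Peeling source-side arrows gives $\alpha_{l-m+1}\cdot f_{q_m} = f_{q_{m-1}}$, while unique factorization in $\field Q$ ensures no other arrow acts nontrivially on $f_{q_m}$. Consequently, the submodule lattice of $\nabla_i^A$ is the chain $\langle f_{q_0}\rangle \subsetneq \langle f_{q_0},f_{q_1}\rangle\subsetneq\cdots\subsetneq\nabla_i^A$, proving uniseriality. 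The only real insight is applying condition \eqref{condition1} with $q$ fixed and $p$ varying across all non-trivial basis-indexing paths; once spotted, the rest of the argument is essentially formal.
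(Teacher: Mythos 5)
Your argument is correct and follows essentially the same route as the paper's proof: non-injectivity of $\nabla_i^A$ yields a path $q\colon j\to i$ with $j>i$, Condition \eqref{condition1} of Theorem \ref{proposition_regularity_hereditary} forces every non-trivial basis path of $\nabla_i^A$ to be a left factor of $q$, and the resulting chain of left factors gives uniseriality. Your write-up is somewhat more explicit than the paper's (which compresses the last step into the observation that $\nabla_i^A$ sits inside a uniserial submodule generated by a single dual basis element), but the key idea is identical.
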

\begin{proof}
   Let $1\leq k\leq n$, and suppose that $\nabla_k$ is not injective. Then, 
   there is a path $q: j\rightarrow k$ for some $j>k$.
   Then any non-trivial path $p:i\rightarrow k$ with $\max(p)=k$ is a subpath of $q$, since by the absence of loops $k>i$, so that we can apply Condition \ref{condition1}, Theorem \ref{proposition_regularity_hereditary}. Hence $\nabla_j$ is a submodule of the uniserial submodule $N_{q'}$ of  $\nabla_k$ generated by $(q')^{*}$, and thus uniserial.
\end{proof}
We end the section with some examples:
\begin{example}\label{example_regular_path}
\begin{enumerate}
 \item Let $A=\field Q$ where $Q$ is the quiver 
\[\begin{tikzcd}[ampersand replacement=\&]
	1 \& 2 \& 3.
	\arrow["\alpha", from=1-1, to=1-2]
	\arrow["\beta"', from=1-3, to=1-2]
\end{tikzcd}\]
Then it has an exact Borel subalgebra given by the subquiver
\[\begin{tikzcd}[ampersand replacement=\&]
	1 \& 2 \& 3
	\arrow["\alpha", from=1-1, to=1-2]
\end{tikzcd}\]
but this is not regular, since the non-direction-preserving path $\beta:3\rightarrow 2$ does not contain the right-minimal direction-preserving path $\alpha:1\rightarrow 2$. In fact, it follows from  \cite[Theorem 60]{Markus} that this $A$ does not admit a regular exact Borel subalgebra. However, note that $\nabla_1^A=L_1$, $\nabla_3^A=L_3$ and $\nabla_2^A=P_1^A$ are all uniserial, so that the opposite implication in Corollary \ref{corollary_injectiveuniserial} does not hold.
\item   Let $A=\field Q$ where $Q$ is the quiver 
\[\begin{tikzcd}[ampersand replacement=\&]
	1 \& 2 \& 3
	\arrow["\alpha"', from=1-2, to=1-1]
	\arrow["\beta", from=1-2, to=1-3]
\end{tikzcd}\]
Then it has an exact Borel subalgebra given by the subquiver
\[\begin{tikzcd}[ampersand replacement=\&]
	1 \& 2 \& 3
	\arrow["\beta", from=1-2, to=1-3]
\end{tikzcd}\]
This is regular, since it has only one right-minimal direction-preserving path, namely $\beta$ and only one non-direction-preserving path, namely $\alpha$, and since they don't terminate in the same vertex, there is nothing to be checked. Again, this example is encompassed in \cite[Theorem 60]{Markus}.
    \item Let $A=\field Q$ where $Q$ is the quiver 
\[\begin{tikzcd}[ampersand replacement=\&]
	\& 1 \\
	4 \&\& 3 \\
	\& 2
	\arrow["\delta", from=1-2, to=2-3]
	\arrow["\alpha", from=2-1, to=1-2]
	\arrow["\beta"', from=2-1, to=3-2]
	\arrow["\gamma"', from=3-2, to=2-3]
\end{tikzcd}\]
Then, it has an exact Borel subalgebra given by the subquiver
\[\begin{tikzcd}[ampersand replacement=\&]
	\& 1 \\
	4 \&\& 3 \\
	\& 2
	\arrow["\delta", from=1-2, to=2-3]
	\arrow["\gamma"', from=3-2, to=2-3]
\end{tikzcd}\]
but this is not regular, since the non-direction-preserving path $\delta\alpha:4\rightarrow 3$ does not contain the right-minimal direction-preserving path $\gamma:2\rightarrow 3$.
\end{enumerate}
\end{example}
\section{Further Examples}\label{section6}
As in the previous section, we assume that $\field$ is algebraically closed throughout this section.\\
In \cite{idempotents}, Conde and Külshammer investigated exact Borel subalgebras of idempotent subalgebras and quotient algebras.
In the following, given a monomial quasi-hereditary algebra, we explain how the exact Borel subalgebra of $A$ generated by right minimal paths relates to the exact Borel subalgebras generated by right-minimal paths of certain quotient algebras and certain idempotent subalgebras of $A$. More concretely, we show that for a quasi-hereditary monomial algebra $A$, if $B_{\min}$ is the exact Borel subalgebra of $A$ generated by right-minimal direction-preserving paths, then for $e=\sum_{i=i_0}^n e_i$, $eB_{\min}e$ is the exact Borel subalgebra of $eAe$ generated by right-minimal direction-preserving paths. Note that by \cite[Theorem 5.9]{idempotents}, it is already known that $eB_{\min}e$ is an exact Borel subalgebra.\\
Moreover, we show that if $J$ is any monomial ideal in $A$ such that $A/J$ is quasi-hereditary with the induced order, then $B_{\min}/(B_{\min}\cap J)$ is the exact Borel subalgebra of $A/J$ generated by right-minimal direction-preserving paths.
Additionally, we show that if $A$ is hereditary and admits a regular exact Borel subalgebra, and $J$ is as above, then $A/J$ admits a regular exact Borel subalgebra.
\begin{proposition}\label{proposition_Borel_eAe}
   Let $1\leq i_0\leq n$, $e:=\sum_{i=i_0}^n e_i^A$, and let $B$ be the exact Borel subalgebra of $A$ generated by the right-minimal direction preserving paths. Then $eBe$ is the exact Borel subalgebra of $eAe$ generated by the right-minimal direction preserving paths.
\end{proposition}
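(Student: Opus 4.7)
The plan is to identify $eB_{\min}e$ with the exact Borel subalgebra of $eAe$ by comparing the path bases furnished by Corollary \ref{corollary_span}.

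First I would verify that $eAe$, equipped with the restriction of $\leq_A$ to $\{i_0, \ldots, n\}$, is itself a basic quasi-hereditary monomial algebra. Quasi-hereditariness follows from the fact that $\{i_0, \ldots, n\}$ is an upper set in the natural order on the simples of $(A, \leq_A)$, and monomiality follows from the observation that the arrows of the Gabriel quiver of $eAe$ correspond to paths in $Q$ between vertices in $\{i_0, \ldots, n\}$ whose only vertices lying in $\{i_0, \ldots, n\}$ are the endpoints, with the relations inherited monomially from $I$ (paths whose products lie in $I$ become zero in $eAe$, and no binomial relation can appear because factorisations at idempotents of $eAe$ are unique). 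Once this is in place, Theorem \ref{thm_borel_hereditary} applied to $eAe$ provides an exact Borel subalgebra $\tilde{B}$ of $eAe$, generated by the right-minimal direction-preserving paths of $eAe$, with a basis given by paths.

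The main step is then to invoke Corollary \ref{corollary_span} in both $A$ and $eAe$ to write down the two bases. In $A$, the corollary gives that $B_{\min}$ has a basis consisting of all paths $p+I$ in $A$ with $t(p) = \max(p)$; sandwiching with $e$ yields a basis of $eB_{\min}e$ consisting of those paths $p+I$ satisfying additionally $s(p), t(p) \geq i_0$. In $eAe$, the corollary gives that $\tilde{B}$ has a basis consisting of all paths $p+I$ in $eAe$ (so $s(p), t(p) \geq i_0$) whose terminal vertex is the maximum over all vertices of $p$ that lie in $\{i_0, \ldots, n\}$.

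The key observation is that these two conditions agree on paths with $s(p), t(p) \geq i_0$: any vertex $k$ of $p$ with $k < i_0$ satisfies $k < i_0 \leq t(p) \leq \max(p)$, so restricting to vertices in $\{i_0, \ldots, n\}$ cannot decrease the maximum. Hence $t(p) = \max(p)$ if and only if $t(p)$ is maximal among the vertices of $p$ lying in $\{i_0, \ldots, n\}$. The two bases thus coincide and $eB_{\min}e = \tilde{B}$. The only real obstacle is the preliminary check that $eAe$ is a quasi-hereditary monomial algebra so that Corollary \ref{corollary_span} can be applied to it; once this is settled, the conclusion is immediate from the comparison of bases.
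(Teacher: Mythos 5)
Your argument is correct, but it takes a genuinely different route from the paper's. The paper never leaves the level of generators: it first checks that the right-minimal direction-preserving paths of $eAe$ are exactly those of $A$ that lie in $eAe$ (using that a direction-preserving path starting at a vertex $\geq i_0$ must end at a vertex $\geq i_0$), which gives the inclusion of the subalgebra generated by them into $eBe$, and then proves the reverse inclusion by writing an element of $eBe$ as a sum of products of right-minimal direction-preserving paths of $A$ and showing, by downward induction on the position of the factor, that every factor already lies in $eAe$. That argument is purely combinatorial and does not need $eAe$ to be quasi-hereditary; the fact that $eBe$ is an exact Borel subalgebra of $eAe$ is delegated to \cite[Theorem 5.9]{idempotents} in the remark preceding the proposition. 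You instead first verify that $eAe$ with the restricted order is itself a basic quasi-hereditary monomial algebra (both points are standard: $\{i_0,\dots,n\}$ is a coideal of the poset, and the relations of $eAe$ are monomial because the decomposition of a path with endpoints in $\{i_0,\dots,n\}$ at its visits to $\{i_0,\dots,n\}$ is unique), so that Theorem \ref{thm_borel_hereditary} and Corollary \ref{corollary_span} apply to $eAe$; the identification then reduces to the observation that for a path $p$ with $s(p),t(p)\geq i_0$ one has $t(p)=\max(p)$ if and only if $t(p)$ is maximal among the vertices of $p$ lying in $\{i_0,\dots,n\}$, since every discarded vertex is $<i_0\leq t(p)$. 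What your approach buys is that the exact Borel property of $eBe$ comes for free from Theorem \ref{thm_borel_hereditary} rather than from the external reference, and the comparison of the two subalgebras becomes a one-line matching of path bases; the cost is the preliminary verification that $eAe$ is basic quasi-hereditary monomial, which the paper's generator-level induction avoids entirely.
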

\begin{proof}
    Let $p+I$ be a right-minimal direction-preserving path in $A$, such that $p+I=e(p+I)e$. Then $p+I$ is clearly a right-minimal direction preserving path in $eAe$.\\
    On the other hand, let $p+I$ be a direction preserving path in $eAe$. Then $p+I$ is also direction-preserving in $A$. Suppose $p+I=(p'+I)(p''+I)$ in $A$ where $p''+I$ is direction-preserving in $A$. Then $s(p')=t(p'')\geq s(p'')=s(p)\geq i_0$, so that $p''+I=e(p''+I)e$ and $p'+I=e(p'+I)e$, and hence $p+I$ is not right-minimal direction-preserving in $eAe$.
    Hence the right-minimal direction preserving paths in $eAe$ are exactly the right-minimal direction preserving paths in $A$ which lie in $eAe$.
    In particular, the subalgebra $B'$ of $eAe$ which is generated by the right-minimal direction-preserving paths in $eAe$ lies in $eBe$.\\
    Moreover, if $b\in eBe$ then we can write $b$ as a sum of products of right-minimal direction-preserving paths in $A$, say $b=\sum_{k=1}^Kp_k+I=\sum_{k=1}^K \prod_{l=1}^{L_k} p_{lk}+I=\sum_{k=1}^K (p_{1k}+I)\cdot \dots \cdot (p_{L_kk}+I)$. We need to show that $p_{lk}+I\in eAe$ for all $1\leq k\leq K$, $1\leq l\leq L_k$. Let us fix $1\leq k\leq K$, and proceed by induction on $L_k-l$. If $l=L_k$ then, since $b=be$ and $A$ is monomial, $p_k=p_ke$ so that $s(p_{k,L_k})=s(p_k)\geq i_0$. Since $p_{k,L_k}$ is direction-preserving this implies that $t(p_{k,L_k})\geq i_0$, so that $p_{k,L_k}+I=e(p_{k,L_k}+I)e$. Now suppose $l<L_k$. By induction hypothesis, $p_{k,l+1}+I=e(p_{k,l+1}+I)e$, so that $t(p_{k,l+1})=s(p_{k,l})\geq i_0$. Since $p_{k,l}$ is direction-preserving this implies that $t(p_{k,l})\geq i_0$, so that $p_{k,l}+I=e(p_{k,l}+I)e$.
\end{proof}
The following example shows that this is in general not true if $e=\sum_{i\in S}e_i$ for some subset $S\subseteq \{1, \dots, n\}$ that is not of the form $S=\{i_0, i_0+1, \dots, n\}$.
\begin{example}
    Let $Q$ be the quiver 
\[\begin{tikzcd}[ampersand replacement=\&]
	1 \& 3 \& 2
	\arrow["\alpha", from=1-1, to=1-2]
	\arrow["\beta", from=1-2, to=1-3]
\end{tikzcd}\]
and $A=\field Q$. Then the exact Borel subalgebra $B_{\min}$ of $A$ generated by the right-minimal direction preserving paths is given by the subquiver
\[\begin{tikzcd}[ampersand replacement=\&]
	1 \& 3 \& 2
	\arrow["\alpha", from=1-1, to=1-2]
\end{tikzcd}\]
Let $e:=e_1+e_2$. Then we can identify $eAe$ with the path algebra of the quiver
\[\begin{tikzcd}[ampersand replacement=\&]
	1 \&\& 2
	\arrow["{\beta\alpha}", from=1-1, to=1-3]
\end{tikzcd}\]
and $eBe$ with the subalgebra given by the subquiver 
\[\begin{tikzcd}[ampersand replacement=\&]
	1 \&\& 2
\end{tikzcd}\]
This is not an exact Borel subalgebra of $eAe$.
\end{example}
\begin{proposition}
    Let $J$ be a monomial ideal in $A$ (possibly containing trivial paths) such that $A/J$ is quasi-hereditary with the induced order, and let $B$ be the exact Borel subalgebra of $A$ generated by the right-minimal direction preserving paths. Then $B/(B\cap J)$ is the exact Borel subalgebra of $A/J$ generated by the right-minimal direction-preserving paths.
\end{proposition}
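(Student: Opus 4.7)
The plan is to identify $B/(B\cap J)$ with the image of $B$ under the canonical projection $\pi:A\rightarrow A/J$, and to show that this image coincides with the exact Borel subalgebra of $A/J$ generated by its right-minimal direction-preserving paths. Since $J$ is a monomial ideal, $A/J\cong \field Q/(I+J')$ (where $J'$ is the preimage of $J$ in $\field Q$) is again a basic monomial algebra, and by assumption it is quasi-hereditary with the induced order. Hence Theorem \ref{thm_borel_hereditary} applied to $A/J$ produces an exact Borel subalgebra $B^{A/J}_{\min}\subseteq A/J$ generated by the right-minimal direction-preserving paths of $A/J$.

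The key combinatorial step is to establish the following bijection: a path $\bar p\in A/J$ is right-minimal direction-preserving if and only if $\bar p=\pi(p+I)$ for some right-minimal direction-preserving path $p+I\in A$ with $p\notin J'$. Indeed, as discussed in the proof of Lemma \ref{lemma_product}, in a basic monomial algebra a non-trivial path is right-minimal direction-preserving precisely when $s(p)<t(p)$ and every inner vertex of $p$ is $\leq s(p)$. This condition depends only on the underlying path $p$ in $Q$ and not on which monomial ideal we quotient by, so long as the image is nonzero. Therefore the set of right-minimal direction-preserving paths in $A/J$ is obtained from the corresponding set in $A$ by keeping only those whose image under $\pi$ is nonzero.

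Now restrict $\pi$ to $B$. Its kernel is visibly $B\cap J$. For the image, observe on the one hand that $B$ is generated, as a subalgebra, by $L$ together with the right-minimal direction-preserving paths of $A$ (by definition), and that $\pi$ sends each such generator either to $0$ or to a generator of $B^{A/J}_{\min}$. Hence $\pi(B)\subseteq B^{A/J}_{\min}$. On the other hand, every right-minimal direction-preserving path $\bar p$ of $A/J$ lifts by the bijection above to a right-minimal direction-preserving path $p+I$ of $A$, which lies in $B$ and satisfies $\pi(p+I)=\bar p$. Therefore all generators of $B^{A/J}_{\min}$ lie in $\pi(B)$, giving the reverse inclusion. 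Combining, $\pi(B)=B^{A/J}_{\min}$, so $B/(B\cap J)\cong B^{A/J}_{\min}$ via the map induced by $\pi$.

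I do not anticipate a genuine obstacle: the whole argument is essentially formal once the combinatorial correspondence between right-minimal direction-preserving paths in $A$ and in $A/J$ is established, and the latter reduces to the observation that right-minimality is a condition on the interior-vertex profile of the underlying path in $Q$. The only point requiring care is confirming that $A/J$ satisfies the hypotheses of Theorem \ref{thm_borel_hereditary} — this is immediate, since $A/J$ is basic monomial (as $J$ is monomial) and quasi-hereditary with the induced order by assumption.
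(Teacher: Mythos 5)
Your proposal is correct and takes essentially the same route as the paper: the paper's entire proof is the observation that a path $p+J$ in $A/J$ is right-minimal direction-preserving if and only if $p\notin J$ and $p+I$ is right-minimal direction-preserving in $A$, which is exactly the combinatorial bijection at the heart of your argument. The additional bookkeeping you supply (identifying $B/(B\cap J)$ with $\pi(B)$ and checking both inclusions on generators) is a correct, more explicit rendering of what the paper leaves implicit.
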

\begin{proof}
    This is clear, since a path $p+J\in A/J$ is right-minimal direction preserving in $A/J$ if and only if $p\notin J$ and $p+I$ is right-minimal direction preserving in $A$.
\end{proof}
Quotienting by monomial ideals is also compatible with regularity of the exact Borel subalgebra $B_{\min}$, provided that $A$ is hereditary.  
\begin{proposition}
    Suppose $I=(0)$ and assume that $A$ admits a regular exact Borel subalgebra. Let $J$ be a monomial ideal in $A$ such that $A/J$ is quasi-hereditary with the induced order. Then $A/J$ admits a regular exact Borel subalgebra.
\end{proposition}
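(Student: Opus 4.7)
The plan is to apply the general regularity criterion of Corollary~\ref{corollary_criterion} to $A/J$, using the hereditary criterion of Theorem~\ref{proposition_regularity_hereditary} for $A$ together with the fact that the quiver $Q$ of a hereditary finite-dimensional algebra has no oriented cycles.

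I first observe that since $J$ is a monomial ideal and has a basis of paths, a path $p$ with $p \notin J$ is right-minimal direction-preserving in $A$ if and only if $p + J$ is right-minimal direction-preserving in $A/J$; in particular, the set of non-zero right-minimal direction-preserving paths in $A/J$ starting at $i$ is identified with $\{p \in E_i : p \notin J\}$. Given a pair $(p + J,\, q + J)$ in the analogue of $E'_{ij}$ for $A/J$ with $q + J \neq e_j^{A/J}$, we obtain a right-minimal direction-preserving path $p \colon i \to k$ (so $k > i$ and $\max(p) = k$) and a non-trivial path $q \colon j \to k$ with $\max(q) = s(q) = j > k$, both viewed as paths of $Q$ with $p, q \notin J$. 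Condition~(1) of Theorem~\ref{proposition_regularity_hereditary} applied to $A$ then yields a path $r \colon j \to i$ in $Q$ such that $q = pr$, so $q + J = p(r + J)$; this is the factorization with $q'' := r$ required by Corollary~\ref{corollary_criterion}.

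It then remains to verify the side condition vacuously: no $p' + J$ in $E_i^{A/J} \setminus \{p + J\}$ satisfies $\max(p'r) = j$. From $q = pr$ and $\max(q) = j > k = \max(p)$ one extracts $\max(r) = j$, so $\max(p'r) = j$ forces $t(p') \leq j$. Condition~(2) of Theorem~\ref{proposition_regularity_hereditary} applied to $r \colon j \to i$, together with $t(p) = k < j$, exhibits $p$ as the \emph{unique} right-minimal direction-preserving path starting at $i$ with target below $j$; hence any $p' \in E_i \setminus \{p\}$ satisfies $t(p') \geq j$. The case $t(p') > j$ contradicts $\max(p'r) = j$, while $t(p') = j$ would combine with $r \colon j \to i$ to form an oriented cycle $i \to j \to i$ in $Q$, impossible for a finite-dimensional path algebra. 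Thus the condition holds vacuously and Corollary~\ref{corollary_criterion} yields regularity for $A/J$. The main subtlety lies in the clean path-level translation between $A$ and $A/J$ at the outset; once that is in place, the acyclicity of $Q$ automatically eliminates the otherwise problematic $t(p') = j$ case.
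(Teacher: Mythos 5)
Your proof is correct and takes essentially the same route as the paper's: identify the right-minimal direction-preserving paths of $A/J$ with those of $A$ not lying in $J$, use Condition (1) of Theorem \ref{proposition_regularity_hereditary} for $A$ to produce the factorization $q=pr$, and use Condition (2) together with the absence of oriented cycles to make the side condition in Corollary \ref{corollary_criterion} vacuous. The only difference is that the paper first reduces to the case $J\subseteq\rad(A)^2$ by splitting off the generators of length at most one (handled via Remark \ref{remark_subquiver}), so that $A/J$ is presented by an admissible monomial ideal before the criterion is applied, whereas you absorb this into your path-level identification; this is a presentational rather than a mathematical difference.
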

\begin{proof}
Let $p_1, \dots, p_n$ be the monomial generators of $J$ and let $J_0:=(p_i|l(p_i)\leq 1)$. Then $A/J_0$ is isomorphic to the path algebra of some subquiver of $Q'$, so that it is hereditary and admits a regular exact Borel subalgebra by Remark \ref{remark_subquiver}. Moreover, $J/J_0$ is a monomial ideal in $A/J_0$ generated by paths of length at least two. By replacing $A$ with $A/J_0$, we can therefore assume without loss of generality that $J\subseteq \rad(A)^2$.\\
Let $p+J$ be a right-minimal direction preserving path in $A/J$, and $q+J$ be a path in $A/J$ such that $s(q)>t(q)=t(p)$, and such that for any path $q'+J$ in $A/J$ with $\max(q')\leq s(q)$ and $q'p\in J$ we have $q'q\in J$.
We have to show that $q+J=pq''+J$ for some path $q''+J\in A/J$ with $p'q''\in J$ for all right-minimal direction preserving paths $p'+J\neq p+J$ with $s(p')=s(p)$ and $\max(p'q'')=j$.\\
We have seen that $p$ is a right-minimal direction preserving path in $A$. Moreover, $q$ is a path in $A$ such that $s(q)>t(q)=t(p)$. Hence by Proposition \ref{proposition_regularity_hereditary}, there is a path $q''$ in $A$ such that $q=pq''$. Moreover, since $s(q'')=s(q)>t(q)$, we can conclude, again by  Proposition \ref{proposition_regularity_hereditary}, that there is only one right-minimal direction preserving path $p'=p$ in $A$ with $s(p')=s(p)$ and $t(p')\leq s(q)$. Hence, there is no right-minimal direction preserving path $p'+J\neq p+J$ with $s(p')=s(p)$ and $t(p')\leq s(q)$ in $A/J$, so that, by that absence of loops, $\max(p'q'')>s(q)=j$ for all right direction preserving path $p'+J\neq p+J$ with $s(p')=s(p)$.
\end{proof}
Unfortunately, this does not work if $A$ is not hereditary, as can be seen in the following example:
\begin{example}
Let $Q$ be the quiver 
\[\begin{tikzcd}[ampersand replacement=\&]
	1 \\
	\& 2 \& 3 \\
	4
	\arrow["\alpha"', from=1-1, to=2-2]
	\arrow["\gamma"', from=2-2, to=2-3]
	\arrow["\beta", from=3-1, to=2-2]
\end{tikzcd}\]
and let $I=(\gamma\alpha)$. Then $A=\field Q/I$ is quasi-hereditary by \cite[Theorem 3.10]{schroll}, and admits a regular exact Borel subalgebra by Lemma \ref{lemma_regularity}, since
\begin{align*}
    E_{12}'=\{(\alpha+I, e_2^A)\}\\
    E_{23}'=\{(\gamma+I, e_3^A)\}\\
    E_{24}'=\{(\gamma+I, \gamma\beta+I)\}\\
    E_{ij}'=\emptyset \textup{ else}.
\end{align*}
Note that $(\alpha+ I, \beta+I)\notin E_{14}'$ since $\gamma\alpha\in I$ and $\gamma\beta\notin I$.\\
Let $J:=(\gamma+I)$ be the ideal generated by $\gamma+I$ in $A$. Then $A/J$ is isomorphic to the path algebra of the quiver $Q'$ given by 
\[\begin{tikzcd}[ampersand replacement=\&]
	1 \\
	\& 2 \& 3 \\
	4
	\arrow["\alpha"', from=1-1, to=2-2]
	\arrow["\beta", from=3-1, to=2-2]
\end{tikzcd}\]
which does not admit a regular exact Borel subalgebra by Proposition \ref{proposition_regularity_hereditary}, since $\beta$ does not factor through $\alpha$.\\
Similarly, if $J':=(e_3^A)$ is the ideal generated by $e_3^A$ in $A$, then $A/J'$ is isomorphic to to the path algebra of the quiver $Q''$ given by
\[\begin{tikzcd}[ampersand replacement=\&]
	1 \\
	\& 2 \\
	4
	\arrow["\alpha"', from=1-1, to=2-2]
	\arrow["\beta", from=3-1, to=2-2]
\end{tikzcd}\] which does not admit a regular exact Borel subalgebra for the same reason, see also Example \ref{example_regular_path}.
\end{example}
\section{Counting quasi-hereditary structures with regular exact Borels}\label{section7}
Suppose $A$ is the path algebra of a quiver with underlying graph $\textup{A}_n$. In this setting \cite[Theorem 60]{Markus} characterizes the partial orders with respect to which $A$ admits a regular exact Borel subalgebra.
In the following, we will apply our result to re-prove this statement, as well as to obtain similar results for type $\textup{D}$ and $\textup{E}$. Moreover, using similar techniques as in \cite{combinatorics}, where quasi-hereditary structures for type $\textup{D}$ and $\textup{E}$ were counted, we will count the quasi-hereditary structures for type  $\textup{D}$ and $\textup{E}$ which admit a regular exact Borel subalgebra. Here,  as in \cite{combinatorics}, we only address the case where those cannot be deconcatenated at a sink or a source in the sense of \cite{combinatorics}. In view of \cite[Theorem 1.3]{combinatorics} as well as \cite[Proposition 54, 55 and 57]{Markus}, iterated deconcatenation may be used to deduce similar results in the cases where there are sinks and/or sources.\\
So far, we have worked with a particular choice of a total order on the vertex set of our graph, corresponding to a total order on the isomorphism classes of simple modules $\Sim(A)$. In the following, we will need to more generally consider partial orders on $\Sim(A)$. 
Therefore, let us recall some important definitions in this context:
\begin{definition}
     Let $A$ be a finite-dimensional algebra and let $\leq$ be a partial order on $\Sim(A)$. Then $\leq$ is called adapted if for any module $M\in \modu A$ with simple socle $L'$ and simple top $L$ such that $L$ and $L'$ are not comparable, there is a composition factor $L''$ of $M$ such that $L''>L$ and $L''>L'$.
\end{definition}
\begin{remark}\label{remark_adapted_quiver}
    Let $\leq$ be a partial order on $\Sim(\field Q)$ for a quiver $Q$. Then, $\leq$ is adapted if and only if for any path $p:i\rightarrow j$ between non-comparable vertices $i$ and $j$, there is an inner vertex $k$ of $p$ such that $k>i$ and $k>j$.
\end{remark}
\begin{definition}
    Let $A$ be a finite-dimensional algebra. Two partial orders on $\Sim(A)$ are called equivalent if they give rise to the same standard and costandard modules. 
\end{definition}
Let us state some well-known facts about quasi-hereditary algebras. Most of them are easy to check, nevertheless, we provide references for the non-trivial ones:
\begin{itemize}\label{list_of_properties}
    \item  Since the definition of quasi-heredity only references the standard modules, and not the partial order, it is clear that if $\leq$ is equivalent to $\leq'$ then $(A, \leq)$ is quasi-hereditary if and only if $(A, \leq')$ is.
    \item  \cite[p. 3-4]{DlabRingel} If $\leq$ is an adapted partial order and $\leq'$ is a refinement of $\leq$, then $\leq$ and $\leq'$ are equivalent. In particular, any adapted partial order is equivalent to a total order.
    \item \cite[p.3]{coulembier} If $\leq$ is an adapted partial order, then there is a unique adapted partial order $\leq_{\ess}$ such that any partial order which is equivalent to $\leq$ is a refinement of $\leq_{\ess}$. This partial order is called the essential partial order of the equivalence class of $\leq$.
    \item  Since every refinement of an adapted partial order is adapted, this implies that if $\leq$ and $\leq'$ are equivalent partial orders and $\leq$ is adapted, then so is $\leq'$.
    \item  \cite[p. 3]{coulembier} Two adapted partial orders give rise to the same costandard modules if and only if they give rise to the same standard modules.
    \item \cite[Proposition 1.4.12]{thesis} If $\leq$ is a partial order such that $(A, \leq)$ is quasi-hereditary, then $\leq$ is adapted.
    \item \cite{CPS} If  $A=\field Q$ is the path algebra of a quiver $Q=(V, E)$, then $(A, \leq)$ is quasi-hereditary for any adapted partial order.
\end{itemize}
 Recall that $\Sim(\field Q)$ is in bijection with the vertex set $V$ of $Q$, so that we identify partial orders on $\Sim(\field Q)$ with partial orders on $V$. In particular, by quasi-hereditary structures on $\field Q$ we mean equivalence classes of partial orders on $V$ which correspond to equivalence classes of adapted partial orders on $\Sim(\field Q)$.\\
 In the previous sections, we assumed that the vertices of $Q$ were indexed by numbers $1, \dots, n$ and the order we considered was the natural order. In the following, this is impractical, and we will instead consider arbitrary adapted partial orders. Recall that if we have some total order on $V$, we can label the vertices in $V$ by $1, \dots, n$, so that the given total order corresponds to the natural order. In this way, we can still use our results from the previous sections for any total order on $V$, reinterpreting the symbols $\leq$, $\max$ and $\min$ according to our chosen total order.
\begin{proposition}\cite[Theorem 60]{Markus}
    Let $Q$ be a quiver with underlying graph $\textup{A}_n$ and let $A:=\field Q$. Moreover, let $\leq_A$ be an essential adapted partial order on $\Sim(A)$. Then $(A, \leq_A)$ admits a regular exact Borel subalgebra if and only if any vertex $k$ in $Q$ with two distinct ingoing arrows $\alpha: i\rightarrow k$ and $\beta: j \rightarrow k$ is either maximal or minimal with respect to $\leq_A$.
\end{proposition}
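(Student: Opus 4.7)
The plan is to apply Theorem \ref{proposition_regularity_hereditary} to any total refinement of $\leq_A$, using that admitting a regular exact Borel subalgebra is an intrinsic property of the quasi-hereditary algebra and hence depends only on the equivalence class of $\leq_A$. So I fix any total order $\leq$ refining $\leq_A$ and check conditions (1) and (2) of Theorem \ref{proposition_regularity_hereditary} on $(A,\leq)$.

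First I would show that condition (2) is automatically satisfied for any total order on an $A_n$-quiver. The key observation is that $|E_i|\leq 2$, and $|E_i|=2$ forces $i$ to have two outgoing arrows, i.e.\ to be a source of $Q$; but then $i$ has no incoming arrows, so no path $q\colon j\to i$ with $j>i$ exists, making (2) vacuous. Thus regularity reduces to condition (1). For (1) I would argue that two paths $p,q$ ending at the same vertex $k$ either enter $k$ via the same neighbor, in which case they lie on a single monotone chain into $k$ and the hypotheses $\max(p)=k>i$ and $j>k$ force $i$ to sit on the sub-path from $j$ to $k$, yielding the desired factorization $q=pr$; or they enter $k$ via different neighbors, which happens precisely when $k$ has two ingoing arrows, i.e., is a double sink. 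So (1) can only fail at a double sink. At such a $k$, with left and right monotone chains $\{l_1,l_2,\ldots\}$ and $\{r_1,r_2,\ldots\}$ into $k$, a straightforward case analysis produces the dichotomy that (1) at $k$ holds in $\leq$ iff either (X) every $l_j$ and every $r_j$ lies below $k$ in $\leq$, or (Y) both $l_1>k$ and $r_1>k$ in $\leq$; any mixed configuration makes (1) fail.

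Finally I would bridge the refinement-level dichotomy (X)/(Y) with the intrinsic statement that $k$ is maximal or minimal in $\leq_A$. By Remark \ref{remark_adapted_quiver}, the length-$1$ paths $l_1\to k$ and $r_1\to k$ have no inner vertex, so adaptedness forces $l_1$ and $r_1$ to be comparable to $k$ in $\leq_A$; hence their position relative to $k$ is the same in every total refinement. If $k$ is maximal in $\leq_A$ then $l_1,r_1<_A k$, and I would propagate this along both chains by induction: if some deeper $l_j$ were not $<_A k$, adaptedness applied to the path $l_j\to\dots\to k$ would require an inner vertex that is $>_A$ both endpoints, contradicting $k$ maximal. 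This forces (X). Dually, $k$ minimal gives $l_1,r_1>_A k$ by comparability, which is (Y). For the converse, if $k$ is neither maximal nor minimal in $\leq_A$, a short argument using essentiality produces both a chain neighbor below $k$ and a chain vertex above $k$, constituting the mixed configuration that violates both (X) and (Y). The main obstacle is this last converse direction: carefully showing that essentiality on an $A_n$-quiver leaves no "hidden" freedom in the refinement that could still make (X) or (Y) hold when $k$ is a non-extremal double sink in $\leq_A$.
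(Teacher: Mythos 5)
Your overall skeleton coincides with the paper's: pass to a total refinement of $\leq_A$, apply Theorem \ref{proposition_regularity_hereditary}, observe that Condition (2) is vacuous on an $\textup{A}_n$-quiver (two right-minimal direction-preserving paths from $i$ force $i$ to be a double source, which then has no ingoing path), and localize failures of Condition (1) at vertices with two ingoing arrows. Your (X)/(Y) dichotomy at a double sink $k$ is a correct reformulation of Condition (1) there, and your derivation of (X) from maximality and of (Y) from minimality via adaptedness (comparability along arrows, propagated up the chains) is sound and is essentially the paper's argument for the direction ``all double sinks extremal $\Rightarrow$ regular exact Borel subalgebra exists.''

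The genuine gap is the step you yourself flag: ``$k$ neither maximal nor minimal in $\leq_A$ $\Rightarrow$ neither (X) nor (Y).'' This is not a formality, because non-maximality of $k$ is a priori witnessed by some $v>_A k$ that need not lie on either chain into $k$, and non-minimality by some $w<_A k$ that need not be $l_1$ or $r_1$; your dichotomy only sees chain vertices. To close this you need a structural fact about \emph{essential} adapted orders, namely that relations involving a sink $k$ are generated by composition factors of $\Delta_k$ and $\nabla_k$ (so any immediate witness above $k$ admits a path to $k$, and any witness below $k$ forces $[\nabla_k:L_{l_1}]\neq 0$ or $[\nabla_k:L_{r_1}]\neq 0$). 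The paper supplies exactly this input in a slightly different packaging: it uses the characterization (via \cite{coulembier} and \cite[Lemma 2.5]{DlabRingel}) that for an essential adapted order $k$ is minimal iff $\Delta_k=\nabla_k=L_k$ and maximal iff $\Delta_k=P_k$ and $\nabla_k=I_k$; since a double sink has $\Delta_k=P_k=L_k$ automatically, it then deduces directly from Condition (1) that either $k$ lies below both arrow sources (forcing $\nabla_k=L_k$) or above both, in which case no path from a larger vertex can reach $k$ --- such a path would have to factor through both ingoing arrows --- so $\nabla_k=I_k$. Without this (or an equivalent unpacking of essentiality), your argument only proves the statement for the chosen total refinement, not for $\leq_A$ itself, so you should either import the $\nabla_k\in\{L_k,I_k\}$ characterization or prove the chain-witness claim explicitly.
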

\begin{proof}
    Suppose $(A, \leq_A)$ admits a regular exact Borel subalgebra and $k$ is a vertex in $Q$ with two distinct ingoing arrows $\alpha: i\rightarrow k$ and $\beta: j \rightarrow k$.  Recall that  since $\leq_A$ is an essential adapted partial order, \cite[Definition 1.2.5]{coulembier} and \cite[Lemma 2.5]{DlabRingel} imply that $k$ is minimal with respect to $\leq_A$ if and only if $\Delta_k=L_k$ and $\nabla_k=L_k$, and $k$ is maximal with respect to $\leq_A$ if and only if $\Delta_k=P_k$ and $\nabla_k=I_k$.\\
    First, note that since $k$ has two ingoing arrows and $Q$ has underlying graph $\textup{A}_n$, $k$ has no outgoing arrows, so that $P_k=L_k=\Delta_k$. Thus it remains to be shown that either  $\nabla_k=I_k$ or $\nabla_k=L_k$.\\
    Let $\leq_A'$ be a refinement of $\leq_A$ to a total order.
    Then by Proposition \ref{proposition_regularity_hereditary}, Condition \eqref{condition1}, it is not possible that $i<_A' k<_A' j$ or $j<_A' k<_A' i$.\\
    Hence $k<_A' i, j$ or $k>_A' i, j$. If $k<_A' i, j$, then,  since $\leq_A'$ gives rise to the same standard and costandard modules as $\leq_A$, we have that $\nabla_k=L_k$. So let us assume that $k>_A' i, j$. Then, by Proposition \ref{proposition_regularity_hereditary}, Condition \eqref{condition1}, there is no path $p: i'\rightarrow k$ where $i'>_A' k$,  since such a path would have to factor both through $\alpha$ and $\beta$, which is impossible. This implies that $\nabla_k=I_k$.\\
    Before we show the other direction, let us note that any two paths $p:i\rightarrow j$ and $p':i\rightarrow j'$ that start in the same vertex either start with a different arrow, or one is a right subpath of the other. To see this, suppose $q:i\rightarrow k$ is the maximal common right subpath of $p$ and $p'$, i.e. there are $r:k\rightarrow j$ and $r':k\rightarrow j'$ such that $p=rq$, $p'=r'q$ and $q$ is of maximal length with respect to this condition. Then, by maximality of $q$, $r$ and $r'$ do not start with the same arrow. However, $Q$ is of type $\textup{A}_n$, so the vertex $k$ has degree at most two. Hence one of $q$, $r$ and $r'$ must be trivial. If $q$ is trivial, then $p$ and $p'$ start with a different arrow, if, on the other hand, $r$ is trivial, then $p=q$ is a right subpath of $p'$ and if $r'$ is trivial, then $p'=q$ is a right subpath of $p$.\\
    Dually, any two paths $p:i\rightarrow j$, $p':i'\rightarrow j$ either end with a different arrow or one is a left subpath of the other.\\
    In particular, this implies that Condition \eqref{condition2} from Theorem \ref{proposition_regularity_hereditary} is automatically fulfilled for $Q$, since if $p:i\rightarrow k$ and $p':i\rightarrow k'$ are two right-minimal direction preserving paths starting in the same vertex, then by right-minimality, neither can be a right subpath of the other, so that they must start with different arrows, and therefore $i$ must have two distinct outgoing arrows. But then there can be no path $q:j\rightarrow i$ ending in $i$ with $j>i$, since the degree of $i$ is not greater than two.\\
   Let us suppose that $(A, \leq_A)$ does not admit a regular exact Borel subalgebra. Then either Condition \eqref{condition1} or Condition \eqref{condition2} from Theorem \ref{proposition_regularity_hereditary} fails for $Q$. We have seen that Condition \eqref{condition2} is always fulfilled, so that Condition \eqref{condition1} must fail. Therefore, there are paths  $p:i\rightarrow k$ and $q:j\rightarrow k$ in $Q$ with $\max(p)=k>_Ai$ and $j>_Ak$ such that there is no $r:j\rightarrow i$ with $q=pr$. In particular, $p$ and $q$ end in the same vertex, and $p$ is not a left subpath of $q$. But $q$ is also not a left subpath of $p$, since $\max(q)=j>k=\max(p)$. By what we have seen above, $q$ and $p$ must therefore end in a different arrow, so that $k=t(p)=t(q)$ has two distinct ingoing arrows. Moreover, by assumption $i<_Ak<_Aj$, so that $k$ is neither maximal nor minimal with respect to $\leq_A$.
\end{proof}
 In the following, denote by $C_n:=\frac{1}{n+1}\begin{pmatrix}2n \\ n\end{pmatrix}$ the $n$-th Catalan number and for all $n_a, n_b, n_c\geq 0$ let $Q(n_a, n_b, n_c)$ be the quiver
\[\begin{tikzcd}[ampersand replacement=\&]
	\&\&\&\& {b_1} \& \dots \& {b_{n_b}} \\
	{a_{n_a}} \& \dots \& {a_1} \& {a_0} \\
	\&\&\&\& {c_1} \& \dots \& {c_{n_c}.}
	\arrow[from=1-5, to=1-6]
	\arrow[from=1-6, to=1-7]
	\arrow[from=2-1, to=2-2]
	\arrow[from=2-2, to=2-3]
	\arrow[from=2-3, to=2-4]
	\arrow[from=2-4, to=1-5]
	\arrow[from=2-4, to=3-5]
	\arrow[from=3-5, to=3-6]
	\arrow[from=3-6, to=3-7]
\end{tikzcd}\]
If we want to count quasi-hereditary structures admitting regular exact Borel subalgebras on path algebras of quivers of type $\textup{D}$ and $\textup{E}$, we can use \cite[Proposition 54, 55 and 56]{Markus} as well as symmetry in $b$ and $c$ to reduce to the case of the following quivers:
\begin{center}
\begin{tabular}{c|c}
    Type $\textup{D}_n$ & $Q(n-3, 1, 1), Q(n-3, 1, 1)^{\op}, Q(1, n-3, 1), Q(1, n-3, 1)^{\op}$ \\
    Type $\textup{E}_6$ & $Q(1, 2, 2), Q(1, 2, 2)^{\op}, Q(2,1, 1), Q(2, 1,1)^{\op}$ \\
    Type $\textup{E}_7$ & $Q(1, 3, 2), Q(1, 3, 2)^{\op}, Q(2, 3, 1), Q(2, 3, 1)^{\op}, Q(3, 2, 1), Q(3, 2,1)^{\op}$ \\
    Type $\textup{E}_8$ & $Q(1, 4, 2), Q(1, 4, 2)^{\op}, Q(2, 4, 1), Q(2, 4, 1)^{\op}, Q(4, 2, 1), Q(4, 2,1)^{\op}.$ \\
\end{tabular}
\end{center}
For more details on this, see \cite[Example 5.10]{combinatorics}.\\

Denote by $Q_b(n_b)$ the subquiver 
\[\begin{tikzcd}[ampersand replacement=\&]
	{b_{1}} \& {b_2} \& \dots \& {b_{n_b}}
	\arrow[from=1-1, to=1-2]
	\arrow[from=1-2, to=1-3]
	\arrow[from=1-3, to=1-4]
\end{tikzcd}\] and by $Q_c(n_c)$ the subquiver 
\[\begin{tikzcd}[ampersand replacement=\&]
	{c_{1}} \& {c_2} \& \dots \& {c_{n_c}}
	\arrow[from=1-1, to=1-2]
	\arrow[from=1-2, to=1-3]
	\arrow[from=1-3, to=1-4]
\end{tikzcd}\]
of $Q(n_a, n_b, n_c)$.
Moreover, denote for $1\leq k\leq n_b$ by $Q_{b, k}$ the subquiver
\[\begin{tikzcd}[ampersand replacement=\&]
	{b_k} \& {b_{k+1}} \& \dots \& {b_{n_b}}
	\arrow[from=1-1, to=1-2]
	\arrow[from=1-2, to=1-3]
	\arrow[from=1-3, to=1-4]
\end{tikzcd}\]
and for $1\leq k\leq n_c$ denote by $Q_{c, k}$ the subquiver
\[\begin{tikzcd}[ampersand replacement=\&]
	{c_k} \& {c_{k+1}} \& \dots \& {c_{n_c}}
	\arrow[from=1-1, to=1-2]
	\arrow[from=1-2, to=1-3]
	\arrow[from=1-3, to=1-4]
\end{tikzcd}\]
of $Q(n_a, n_b, n_c)$.
In the following, we will often want to pass from the quivers $Q(n_a, n_b, n_c)$ and $Q(n_a, n_b. n_c)^{\op}$ to subquivers such as the ones above. Therefore, let us define the following:
\begin{definition}
    Let $Q$ be any quiver, $Q'$ a subquiver, and $\leq$ a partial order on the vertices of $Q$. Then we denote by $\leq_{Q'}$ the restriction of this partial order to $Q'$.
\end{definition}
\begin{remark}\label{remark_partial_order_restriction}
    It is easy to see that  if $\leq$ is an adapted partial order for $\field Q$, then  $\leq_{Q'}$ is an adapted partial order for $\field Q'$ for any subquiver $Q'$.
    Moreover, by \cite[p. 3]{coulembier}, if $\leq$ is an adapted partial order equivalent to $\hat{\leq}$, then they are both refinements of the corresponding essential partial order $\leq_{\ess}$, so that both $\leq_{Q'}$ and $\hat{\leq}_{Q'}$ are refinements of  $(\leq_{\ess})_{Q'}$. Since $(\leq_{\ess})_{Q'}$ is adapted, this implies that $\leq_{Q'}$ and $\hat{\leq}_{Q'}$ are both equivalent to $(\leq_{\ess})_{Q'}$, and hence equivalent to each other.
\end{remark}
Sometimes, we will also want to pass from partial orders on the vertex sets of subquivers to partial orders on the vertex set of the big quiver. The following definition describes how exactly we intend to do this.
\begin{definition}
    Let $Q=(V, E)$ be a quiver and $Q'=(V', E')$ and $Q''=(V'', E'')$ be two subquivers such that $V=V'\cup V''$ and $V'\cap V''=\emptyset$. Then for any two partial orders $\leq'$ on $V'$ and $\leq''$ on $V''$ we define an induced partial order $\leq$ via
    \begin{align*}
        v\leq w :\Leftrightarrow &(v,w\in V' \textup{ and }v\leq'w) \\\textup{ or } &(v,w\in V'' \textup{ and }v\leq''w)\\ \textup{ or } &(v\in V'\textup{ and }w\in V'' )
    \end{align*}
    Moreover, we let $\iota_{Q', Q''}$ be the map from the set of pairs consisting of a partial order on $V'$ and a partial order on $V''$ to the set of partial orders on $V$ which takes a pair $(\leq', \leq'')$ to the induced partial order $\leq$.\\
    If we instead divide into three subquivers $Q', Q''$ and $Q'''$ we write $\iota_{Q', Q'', Q'''}$ for $\iota_{Q'\cup Q'', Q'''}\circ (\iota_{Q', Q''}\times \id)=\iota_{Q', Q'' \cup Q'''}\circ (\id \times \iota_{Q'', Q'''})$. 
\end{definition}
\begin{remark}\label{remark_adapted_restriction}
    Using Remark \ref{remark_adapted_quiver}, it is easy to see that if there are no edges from $V'$ to $V''$ or no edges from $V''$ to $V'$, then if $\leq'$ and $\leq''$ are adapted, so is $\leq$. Correspondingly, if we divide into three quivers $Q'$, $Q''$ and $Q'''$ and there are for example no edges from $V'$ to $V''$, from $V'''$ to $V'$ and from $V'''$ to $V''$; or no edges from  $V''$ to $V'$, from $V''$ to $V'''$ and from $V'$ to $V'''$, then if $\leq'$, $\leq''$ and $\leq'''$ are adapted, then so is $\leq$.\\
    In general, two adapted partial orders $\leq'$ and $\leq''$ do not necessarily induce an adapted order. For example, for the $\textup{A}_3$ quiver
\[\begin{tikzcd}[ampersand replacement=\&]
	a \& b \& c
	\arrow[from=1-1, to=1-2]
	\arrow[from=1-2, to=1-3]
\end{tikzcd}\]
picking $V'=\{b\}$ and $V''=\{a, c\}$ and the partial orders $\leq'$ and $\leq''$ as the trivial orders on $V'$ respectively $V''$ yields as the induced order $\leq$ on $V$ the trivial order. In this case, $\leq'$ and $\leq''$ are adapted, since $\field Q'$ and $\field Q''$ are semisimple, but $\leq$ is not adapted.
\end{remark}
\begin{lemma}\label{lemma_special_quiver_regular}
   Suppose $\leq$ is a total order on the vertex set of $Q(n_a, n_b, n_c)$. Then the algebra $\field Q(n_a, n_b, n_c)$ admits a regular exact Borel subalgebra with respect to $\leq$ if and only if one of the following holds:
   \begin{itemize}
       \item there is $1\leq k\leq n_b+1$ such that $a_0$ is maximal in the subquiver $Q(0, k-1, 0)$ and, if $k\leq n_b$ then $b_k>a_j$ for every $0\leq j\leq n_a$,  or
       \item there is $1\leq k\leq n_c+1$ such that $a_0$ is maximal in the subquiver $Q(0, 0, k-1)$ and, if $k\leq n_c$ then $c_k>a_j$ for every $0\leq j\leq n_a$.
   \end{itemize}
        Moreover,  $\field Q(n_a, n_b, n_c)^{\op}$ admits a regular exact Borel subalgebra with respect to $\leq$ if and only if
        one of the following holds:
        \begin{itemize}
        \item  $a_0>b_i$ for all $1\leq i\leq n_b$, or\\
        \item  $a_0>c_i$ for all $1\leq i\leq n_c$.
        \end{itemize}
\end{lemma}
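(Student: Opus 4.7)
The plan is to reduce both claims to Theorem~\ref{proposition_regularity_hereditary}, which in the hereditary case reformulates regularity of the exact Borel subalgebra as two path-theoretic conditions. Since both quivers have tree-like structure with a single branching point at $a_0$, these conditions simplify substantially. The crucial observation is that in $Q := Q(n_a, n_b, n_c)$ every vertex has at most one incoming arrow, while in $Q^{\op}$ every vertex has at most one outgoing arrow. This makes condition (1) of Theorem~\ref{proposition_regularity_hereditary} automatic for $\field Q$, and condition (2) automatic for $\field Q^{\op}$, so in each direction only one nontrivial condition remains.

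For $\field Q$ I would first check that condition (1) holds automatically: since every vertex has at most one incoming arrow, two paths ending at the same $k$ are comparable as right subpaths, and the hypothesis $\max(p)=k$ together with $j>k$ forces $q : j \to k$ to factor through $p$ (otherwise $j$ would be an interior vertex of $p$ exceeding $k=\max(p)$). Then condition (2) becomes the content. The only vertices admitting more than one right-minimal direction-preserving path are the $a_{l'}$ with $l'\geq 0$, where the outgoing paths can branch at $a_0$ into the $b$- and $c$-arms. At such an $a_{l'}$, the right-minimal paths into the two arms terminate at the first $b$-vertex and first $c$-vertex exceeding $a_{l'}$, respectively, and condition (2) fails precisely when some $a_{l''}$ with $l''>l'$ and $a_{l''}>a_{l'}$ strictly exceeds both terminal vertices. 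The prototypical case is $l'=0$: the first $b$-vertex above $a_0$ is $b_k$ for the index $k$ appearing in the first bullet, and failure at $a_0$ corresponds to the simultaneous failure of both bullets. The cases $l'\geq 1$ then reduce to $l'=0$: if a right-minimal path from $a_{l'}$ traverses $a_0$, it terminates at the same first exceeding $b$- or $c$-vertex, and otherwise it stays inside the $a$-arm and is automatically unique.

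For $\field Q^{\op}$ the analysis is dual. Condition (2) is vacuous because every vertex has at most one right-minimal direction-preserving path. For condition (1), a potential failure requires $p:i\to k$ and $q:j\to k$ to travel through different arms, which forces $k$ to lie in the $a$-arm (possibly $k=a_0$) while $p$ and $q$ come from the $b$- and $c$-arms respectively. A direct calculation shows that $\max(p:b_l\to a_j)=a_j$ forces $a_j$ to exceed each of $b_1,\dots,b_l, a_0,\dots,a_{j-1}$; combining this with the existence of some $c_{l'}>a_j$ pinpoints the failure mode. Reducing to the extremal case $a_j=a_0$ and taking the contrapositive then yields the criterion that $a_0$ dominates all $b_i$ or all $c_i$.

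The main obstacle is the reduction from arbitrary $a_{l'}$ (respectively $a_j$) to the extremal case at $a_0$: one must verify carefully that potential failures at interior vertices of the $a$-arm are already captured by the condition at $a_0$. The reduction hinges on monotonicity arguments about which indices are running maxima of the $a$-prefix and about how the first-exceeding indices in the $b$- and $c$-arms behave as the base vertex varies. I expect these to be elementary once set up carefully, and would prove them as preliminary observations before completing the main case analysis.
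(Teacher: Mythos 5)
Your overall strategy is exactly the paper's: invoke Theorem \ref{proposition_regularity_hereditary}, note that Condition \eqref{condition1} is automatic for $\field Q(n_a,n_b,n_c)$ (uniserial injectives) and Condition \eqref{condition2} is automatic for the opposite quiver (uniserial projectives), and analyse the remaining condition at the vertices $a_m$. For the first half your sketch is essentially correct: the reduction of a failure of Condition \eqref{condition2} at $a_{l'}$ to a failure at $a_0$ does go through, although your phrase ``terminates at the same first exceeding vertex'' is not literally true (the first $b$-vertex exceeding $a_{l'}$ may lie deeper in the arm than the first one exceeding $a_0$); what one actually needs is that the first $b$- and $c$-vertices exceeding $a_0$ are still dominated by the witness $a_{l''}$, which is the elementary monotonicity you defer and which does close the argument.

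The genuine gap is in the second half. Your intermediate characterisation is right: Condition \eqref{condition1} fails at $a_j$ exactly when $a_j$ dominates $b_1,\dots,b_l$ and $a_0,\dots,a_{j-1}$ for some $l\geq 1$ while some $c_{l'}>a_j$, or symmetrically. But the asserted ``reduction to the extremal case $a_j=a_0$'' producing the criterion ``$a_0>b_i$ for all $i$, or $a_0>c_i$ for all $i$'' cannot be carried out: already at $a_0$ the failure condition reads ``$a_0>b_1$ and some $c_{l'}>a_0$ (or symmetrically)'', which is not the negation of the displayed criterion, and failures at $a_j$ with $j\geq 1$ are not controlled by the behaviour at $a_0$ at all. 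Indeed the second half of the statement is inconsistent with Theorem \ref{proposition_regularity_hereditary}: for $Q(1,1,1)^{\op}$ with the order $a_1<a_0<b_1<c_1$ every standard module is projective, so both $\Ext$-groups in Conde's criterion vanish and a regular exact Borel subalgebra exists (the semisimple subalgebra $L$ works), even though $a_0<b_1$ and $a_0<c_1$; conversely, for the order $b_1<a_0<a_1<c_1$ one computes $\Ext^1_A(\Delta_{b_1},\Delta_{c_1})\cong\field$ while $\Ext^1_A(\Delta_{b_1},L_{c_1})=0$, so no regular exact Borel subalgebra exists although $a_0>b_1$. The paper's own proof of this half stumbles at the same spot, applying Condition \eqref{condition1} with $\max(p)=s(p)$ in place of $\max(p)=t(p)$; so the correct endpoint of your analysis is your intermediate failure-mode description, and the final packaging into the two displayed bullets is where both your argument and the paper's break down.
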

\begin{proof}
    Recall that by Proposition \ref{proposition_regularity_hereditary}, $\field Q(n_a, n_b, n_c)$ resp. $\field Q(n_a, n_b, n_c)^{\op}$ admits a regular exact Borel subalgebra if and only if 
     Conditions \eqref{condition1} and \eqref{condition2} hold where we replace the natural order by our given total order.\\
     Note that for $\field Q(n_a, n_b, n_c)$,
    Condition \eqref{condition1} is  always fulfilled trivially, since the injective modules in $\field Q(n_a, n_b, n_c)$ are uniserial.\\
    Moreover, Condition \eqref{condition2} is always satisfied for $i=b_s$, $1\leq s\leq n_b$,  or $i=c_t$, $1\leq t\leq n_c$, since the corresponding projectives are uniserial, so there is at most one right-minimal direction preserving path starting in these vertices. Additionally, Condition \eqref{condition2} is trivially satisfied for  $i=a_{n_a}$ since there are no non-trivial paths ending in this vertex.\\
    Suppose there is $1\leq k\leq n_b+1$ such that  $a_0$ is maximal in the subquiver $Q(0, k-1, 0)$ and if $k\leq n_b$, $b_k>a_j$ for every $0\leq j\leq n_a$; and assume that Condition \eqref{condition2} is not fulfilled for some $a_i$, $0\leq i< n_a$. Then there is some $i<j\leq n_a$ such that $a_j>a_i$, and two distinct right-minimal direction preserving paths $p, p'$ starting in $a_i$ with endpoints less than $a_j$ in the total order. Since neither of these paths can factor through the other by right-minimality, one of them must end in $b_s$ for some $1\leq s\leq n_b$ and the other in $c_t$ for some $1\leq t\leq n_c$, so that $a_j>b_s>a_i$ and $a_j>c_t>a_i$, while $a_k<a_i$ for all $0\leq k< i$.
    In particular, $a_0\leq a_i<b_s$. If $k=n_b+1$, this is a contradiction, so let us assume that $k\leq n_b$. Then  $a_0\leq a_i<b_s$ implies that $s\geq k$ and since $b_k>a_j$ by assumption and $p$ is right-minimal direction preserving, we conclude that $s=k$. But this is a contradiction, since $a_j>b_s=b_k>a_j$.\\
    Analogously, one can show that  Condition \eqref{condition2} is fulfilled if there is  $1\leq k\leq n_c+1$ such that $a_0$ is maximal in the subquiver $Q(0, 0, k-1)$ and, if $k\leq n_c$ then $c_k>a_j$ for every $0\leq j\leq n_a$.\\
    Now suppose on the other hand that there is neither $1\leq k\leq n_b+1$ such that  $a_0$ is maximal in the subquiver $Q(0, k-1, 0)$ and if $k\leq n_b$, $b_k>a_j$ for every $0\leq j\leq n_a$; nor  $1\leq k\leq n_c+1$ such that $a_0$ is maximal in the subquiver $Q(0, 0, k-1)$ and, if $k\leq n_c$ then $c_k>a_j$ for every $0\leq j\leq n_a$.\\
    Then in particular, $a_0$ is not maximal in either $Q(0, n_b, 0)$ nor $Q(0, 0, n_c)$, since otherwise we could choose $k=n_b+1$ respectively $k=n_c+1$. Let $s:=\min\{k: b_k>a_0\}$ and $t:=\min\{k: c_k>a_0\}$, so that $s\leq n_b$, $t\leq n_c$. Then the paths $p:a_0\rightarrow b_s$ and $p':a_0\rightarrow c_t$ are right-minimal direction-preserving. Moreover, by assumption  there is $a_i>b_s$ and there is $a_j>c_t$. Let $a_l$ be maximal in $Q(n_a, 0, 0)$ and $q:a_j\rightarrow a_0$. Then $a_j>b_s, c_t$, so that \eqref{condition2} fails at the vertex $a_0$.\\
    Similarly, for  $\field Q(n_a, n_b, n_c)^{\op}$, Condition \eqref{condition2} is  always fulfilled trivially, since the projective modules in $\field Q(n_a, n_b, n_c)^{\op}$ are uniserial.\\
    Moreover, Condition \eqref{condition1} is always satisfied for $k=b_s$, $1\leq s\leq n_b$,  or $k=c_t$, $1\leq t\leq n_c$, since the corresponding injective modules are uniserial, so for any two paths ending in $k$, one must factor through the other.
    Suppose Condition \eqref{condition1} is not fulfilled for some $a_i$, $0\leq i\leq n_a$. Then there are paths $p, q\in Q(n_a, n_b, n_c)^{\op}$ ending in $a_i$ such that $q$ does not factor through $p$ and such that $\max(p)=s(p)$ and $s(q)>s(p)$. Note that $p$ does also not factor through $q$ since $\max(p)=s(p)<s(q)$. Hence one of them must start in $b_s$ for some $1\leq s\leq n_b$ and the other in $c_t$ for some $1\leq t\leq n_c$. In particular, $p$ must pass through $a_0$ and $a_0\neq s(p)$, so that $a_0< s(p)<s(q)$ and thus $a_0<b_s$ and $a_0<c_t$.
     On the other hand, suppose that  $a_0<b_s$ for some $1\leq s\leq n_b$ and $a_0<c_t$ for some $1\leq t\leq n_c$. Let us choose $s$ and $t$ minimal with respect to this. Then, if $p$ denotes the path from $b_s$ to $a_0$ and $q$ the path from $c_t$ to $a_0$, we have $\max(p)=b_s$ and $\max(q)=c_t$. Moreover, neither path factors through the other, and since $\leq$ is a total order, we have $b_s<c_t$ or $c_t<b_s$. Thus, Condition \eqref{condition1} fails at the vertex $a_0$.
    Hence $\field Q(n_a, n_b, n_c)^{\op}$ admits a regular exact Borel subalgebra if and only if
    $a_0>c_t$ for all $1\leq t\leq n_c$ or $a_0>b_s$ for all $1\leq s\leq n_b$.
\end{proof}
Since admitting a regular exact Borel subalgebra only depends on an equivalence class of partial orders, and not on the partial orders themselves, we immediately obtain that for two equivalent total orders, the above criteria are fulfilled by one if and only if they are fulfilled by the other.
However, we would like to extend this from total orders to arbitrary adapted partial orders. To do this, we will show that the conditions on the partial order in Lemma \ref{lemma_special_quiver_regular} are fulfilled for one representative of an equivalence class of partial orders if and only if they are fulfilled for any other representative, so that for a given class, they may be checked on any partial order instead of just on the total orders.
\begin{lemma}\label{lemma_equiv_orders}
   Let $\leq$ and $\hat{\leq}$ be two equivalent partial orders on
   a quiver $Q$ of directed type $\textup{A}_n$:
\[\begin{tikzcd}[ampersand replacement=\&]
	{x_1} \& {x_2} \& \dots \& {x_n}
	\arrow[from=1-1, to=1-2]
	\arrow[from=1-2, to=1-3]
	\arrow[from=1-3, to=1-4]
\end{tikzcd}\]
  Let $1\leq i\leq n$. Then, the following statements hold
    \begin{itemize}
        \item $x_i>x_j$ for all $i< j\leq n$ if and only if  $x_i\hat{>}x_j$ for all  $i< j\leq n$ .
        \item $x_i>x_j$ for all $1\leq j< i$ if and only if  $x_i\hat{>}x_j$ for all  $1\leq j< i$.
        \end{itemize}
\end{lemma}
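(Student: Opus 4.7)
My plan is to identify each of the two conditions in the lemma with a module-theoretic condition that is manifestly invariant under the equivalence relation on partial orders. Specifically, I will show that for any adapted partial order $\leq$ on the vertex set of $Q$, the first bullet holds for a given $i$ if and only if $\Delta_{x_i}=P_{x_i}$, and the second holds if and only if $\nabla_{x_i}=I_{x_i}$. Since equivalent partial orders yield the same standard and costandard modules by definition, and since adaptedness of $\leq$ implies adaptedness of $\hat{\leq}$ by the properties recalled at the beginning of this section, both equivalences will then transfer between $\leq$ and $\hat{\leq}$, which is the content of the lemma.

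For the first bullet, recall that in the quiver $x_1\to x_2\to\dots\to x_n$ the projective $P_{x_i}$ is uniserial with composition factors $L_{x_i}, L_{x_{i+1}},\dots, L_{x_n}$ from top to bottom, and $\Delta_{x_i}$ is the maximal quotient of $P_{x_i}$ containing no composition factor $L_{x_k}$ with $x_k>x_i$. The forward direction is immediate: if $x_i>x_j$ for every $j>i$, then every composition factor of $P_{x_i}$ is $\leq x_i$, so no such factor is $>x_i$ and hence $\Delta_{x_i}=P_{x_i}$. For the converse, I would argue by contradiction: suppose $\Delta_{x_i}=P_{x_i}$ but some $j>i$ makes $x_i$ and $x_j$ incomparable. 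Applying Remark \ref{remark_adapted_quiver} to the path $x_i\to x_{i+1}\to\dots\to x_j$ produces an inner vertex $x_k$ with $i<k<j$ and $x_k>x_i$, which contradicts $\Delta_{x_i}=P_{x_i}$. Hence for every $j>i$ the vertices $x_i$ and $x_j$ are comparable, and since $x_j\not>x_i$ we obtain $x_i>x_j$.

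The second bullet is proved in exactly the same way, using that $I_{x_i}$ is uniserial with composition factors $L_{x_1}, L_{x_2},\dots, L_{x_i}$, that $\nabla_{x_i}$ is the maximal submodule of $I_{x_i}$ containing no composition factor $L_{x_k}$ with $x_k>x_i$, and by applying Remark \ref{remark_adapted_quiver} to paths of the form $x_j\to x_{j+1}\to\dots\to x_i$ for $j<i$ in the backward direction. The main subtlety in both converses is that the mere absence of a ``$>$'' relation, which is all that $\Delta_{x_i}=P_{x_i}$ or $\nabla_{x_i}=I_{x_i}$ directly provides, is a priori compatible with $x_i$ and $x_j$ being incomparable; it is precisely the adaptedness of the partial order, applied to the unique path between $x_i$ and $x_j$ in the $\mathrm{A}_n$ quiver, that excludes this possibility and forces the strict inequality that the bullets demand.
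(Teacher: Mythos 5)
Your proof is correct and follows essentially the same route as the paper: both identify the first condition with $\Delta_{x_i}=P_{x_i}$ and the second with $\nabla_{x_i}=I_{x_i}$ and then use that equivalent orders share standard and costandard modules. You additionally spell out the converse step (that $\Delta_{x_i}=P_{x_i}$ forces the strict inequalities via adaptedness and Remark \ref{remark_adapted_quiver}), which the paper's proof leaves implicit; this is a welcome clarification rather than a deviation.
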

\begin{proof}
    By symmetry in $\leq$ and $\hat{\leq}$, we only show one implication for each equivalence.\\
    Suppose  $x_i>x_j$ for all $i< j\leq n$. Then $\Delta_{x_i}=P_{x_i}$, so that  $x_i\hat{>}x_j$ for all $i<j\leq n$. Similarly, if  $x_i>x_j$ for all $1\leq j< i$, then $\nabla_{x_i}=I_{x_i}$, so that  $x_i\hat{>}x_j$ for all  $1\leq j< i$.
\end{proof}
Since restrictions of equivalent partial orders are equivalent, this shows in particular that the criteria in Lemma \ref{lemma_special_quiver_regular} are fulfilled for a given total order if and only if they are fulfilled for any equivalent adapted partial order. 
Since any adapted partial order can be refined to an equivalent total order, we thus obtain a version of Lemma \ref{lemma_special_quiver_regular} for adapted partial orders:
\begin{corollary}\label{corollary_special_quiver_regular}
   The algebra $\field Q(n_a, n_b, n_c)$ admits a regular exact Borel subalgebra with respect to a given adapted partial order $\leq$ if and only if one of the following holds:
   \begin{itemize}
       \item there is $1\leq k\leq n_b+1$ such that $a_0$ is maximal in the subquiver $Q(0, k-1, 0)$ and, if $k\leq n_b$ then $b_k>a_j$ for every $0\leq j\leq n_a$,  or
       \item there is $1\leq k\leq n_c+1$ such that $a_0$ is maximal in the subquiver $Q(0, 0, k-1)$ and, if $k\leq n_c$ then $c_k>a_j$ for every $0\leq j\leq n_a$.
   \end{itemize}
        Moreover,  $\field Q(n_a, n_b, n_c)^{\op}$ admits a regular exact Borel subalgebra with respect to a given partial order $\leq$ if and only if one of the following holds:
        \begin{itemize}
        \item  $a_0>b_i$ for all $1\leq i\leq n_b$, or\\
        \item  $a_0>c_i$ for all $1\leq i\leq n_c$.
        \end{itemize}
\end{corollary}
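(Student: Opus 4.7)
My plan is to reduce the statement to Lemma \ref{lemma_special_quiver_regular} by refinement. Given an adapted partial order $\leq$ on the vertex set of $Q := Q(n_a, n_b, n_c)$ (respectively $Q^{\op}$), I would first choose a total order $\leq'$ refining $\leq$. Since $\leq$ is adapted, any refinement is adapted and equivalent to $\leq$, as recalled in the bullet list on page~\pageref{list_of_properties}. Because equivalent partial orders yield the same standard and costandard modules, the characterisation of regularity in Proposition \ref{proposition_regularity_hereditary} (which is phrased entirely in terms of paths and the sets $E_{ij}$, $E_{ij}'$ arising from the quasi-hereditary structure) depends only on the equivalence class. Hence $\field Q$ admits a regular exact Borel subalgebra with respect to $\leq$ if and only if it does so with respect to $\leq'$, and Lemma \ref{lemma_special_quiver_regular} applies to $\leq'$.

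It then remains to verify that each of the four conditions appearing in the statement is itself invariant under passage to an equivalent partial order, so that it holds for $\leq$ if and only if it holds for $\leq'$. The point is that each condition only compares vertices lying along a directed type-$\textup{A}$ subquiver:
\begin{itemize}
\item ``$a_0$ is maximal in $Q(0, k-1, 0)$'' is a condition on the restriction to the directed $\textup{A}_k$ subquiver $a_0 \to b_1 \to \dots \to b_{k-1}$, and reads ``$a_0 > b_j$ for every $1 \leq j \leq k-1$'';
\item ``$b_k > a_j$ for every $0 \leq j \leq n_a$'' restricts to the directed $\textup{A}_{n_a+k+1}$ subquiver $a_{n_a} \to \dots \to a_0 \to b_1 \to \dots \to b_k$;
\item the analogous conditions involving $c_t$ restrict to corresponding $c$-arm subquivers;
\item for $Q^{\op}$, ``$a_0 > b_i$ for every $1 \leq i \leq n_b$'' restricts to the directed $\textup{A}_{n_b+1}$ subquiver $b_{n_b} \to \dots \to b_1 \to a_0$ of $Q^{\op}$, and similarly with $c$.
\end{itemize}
In every case we arrive at a condition of the form ``$x_i > x_j$ for all $j$ on one side of $i$'' on a directed-$\textup{A}$ quiver. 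By Remark \ref{remark_partial_order_restriction}, the restrictions of $\leq$ and $\leq'$ to such a subquiver remain equivalent, and by Lemma \ref{lemma_equiv_orders} statements of this exact form are preserved under equivalence on type-$\textup{A}$ quivers. Thus each condition in Corollary \ref{corollary_special_quiver_regular} holds for $\leq$ if and only if it holds for $\leq'$.

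Combining the two steps, regularity of the exact Borel subalgebra of $\field Q$ with respect to $\leq$ is equivalent to regularity with respect to $\leq'$, which by Lemma \ref{lemma_special_quiver_regular} is in turn equivalent to one of the listed conditions holding for $\leq'$, which finally is equivalent to the same condition holding for $\leq$. The only real work is the invariance verification, and it is essentially a routine application of Lemma \ref{lemma_equiv_orders} to the appropriate type-$\textup{A}$ subquivers; I do not foresee any genuine obstacle beyond being careful to identify the right subquiver for each condition (in particular including the vertex whose comparisons with the arm are being asserted).
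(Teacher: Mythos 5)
Your proposal is correct and follows essentially the same route as the paper: refine $\leq$ to an equivalent total order, use that admitting a regular exact Borel subalgebra depends only on the equivalence class so that Lemma \ref{lemma_special_quiver_regular} applies, and then use Remark \ref{remark_partial_order_restriction} together with Lemma \ref{lemma_equiv_orders} on the relevant directed type-$\textup{A}$ subquivers to see that the listed conditions are invariant under equivalence. The only micro-detail worth recording is that ``$b_k>a_j$ for all $j$'' only becomes a condition of the exact shape treated in Lemma \ref{lemma_equiv_orders} after combining it, via transitivity, with ``$a_0>b_j$ for $j<k$'', so that $b_k$ dominates everything preceding it on the arm $a_{n_a}\to\dots\to a_0\to b_1\to\dots\to b_k$.
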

We would like to count the number of quasi-hereditary structures on the algebras $\field Q(n_a, n_b, n_c)$ and $\field Q(n_a, n_b, n_c)^{\op}$ that admit regular exact Borel subalgebras. Since a formula is known for the number of quasi-hereditary structures on quivers of type $\textup{A}_n$, the strategy is to divide $Q(n_a, n_b, n_c)$ into subquivers of type $\textup{A}_n$ using the criterion in Corollary \ref{corollary_special_quiver_regular}, and then count the number of quasi-hereditary structures on these subquivers. Let us begin by establishing some notation.
\begin{definition}
    For any quiver $Q$ denote by $\qh(Q)$ the set of quasi-hereditary structures on $\field Q$, and by $\qh^B(Q)$ the quasi-hereditary structures on $\field Q$ admitting a regular exact Borel subalgebra. Moreover, denote by $\qh_b(Q(n_a, n_b, n_c))$ the set  of quasi-hereditary structures on $\field Q(n_a, n_b, n_c)$ such that $a_0>b_i$ for all $1\leq i\leq n_b$ with respect to any/all corresponding partial orders, and by $\qh_c(Q(n_a, n_b, n_c))$ the set  of quasi-hereditary structures on $\field Q(n_a, n_b, n_c)$ such that $a_0>c_i$ for all \\$1\leq i\leq n_c$ with respect to any/all corresponding partial orders.
    Finally, denote for $1\leq k\leq n_b$ by $\qh_{b, k}(Q(n_a, n_b, n_c)$ the set of quasi-hereditary structures on  $\field Q(n_a, n_b, n_c)$ such that $a_0>b_i$ for all $1\leq i< k$ and $b_k>a_j$ for all $0\leq j\leq n_a$ with respect to any/all corresponding partial orders, and denote for for $1\leq k\leq n_c$ by $\qh_{c, k}(Q(n_a, n_b, n_c)$ the set of quasi-hereditary structures on  $\field Q(n_a, n_b, n_c)$ such that $a_0>c_i$ for all $1\leq i< k$ and $c_k>a_j$ for all $0\leq j\leq n_a$ with respect to any/all corresponding partial orders.\\
    Since there is exactly one partial order on the empty set, we use the convention that $\qh(\emptyset)=\qh_B(\emptyset)=\qh_b(\emptyset)=\qh_c(\emptyset)$ consists of this unique partial order, even though the empty quiver does not give rise to a path algebra.
\end{definition}
\begin{proposition}\label{proposition_bijections}
    There are bijections 
    \begin{align*}
       f_b: \qh_b(Q(n_a, n_b, n_c))&\rightarrow   \qh(Q_b(n_b))\times  \qh(Q(n_a, 0, n_c))\\
        [\leq]&\mapsto ([\leq_{Q_b(n_b)}], [\leq_{Q(n_a, 0, n_c)}])\\
         f_c:\qh_c(Q(n_a, n_b, n_c))&\rightarrow   \qh(Q_c(n_c))\times  \qh(Q(n_a, n_b, 0))\\
        [\leq]&\mapsto ([\leq_{Q_c(n_c)}], [\leq_{Q(n_a, n_b, 0)}]).
    \end{align*}
    with inverses given by 
    \begin{align*}
 g_b:  \qh(Q_b(n_b))\times  \qh(Q(n_a, 0, n_c))&\rightarrow  \qh_b(Q(n_a, n_b, n_c))\\
         ([\leq'], [\leq''])&\mapsto [\iota_{Q_b(n_b), Q( n_a, 0, n_c)}(\leq', \leq'')]\\
         g_c:  \qh(Q_c(n_c))\times  \qh(Q(n_a, n_b, 0))&\rightarrow  \qh_c(Q(n_a, n_b, n_c))\\
         ([\leq'], [\leq''])&\mapsto [\iota_{Q_c(n_c), Q( n_a, n_b, 0)}(\leq', \leq'')].
    \end{align*}
   Moreover, for every $1\leq k\leq n_b$ and $1\leq l \leq n_c$ we have bijections
    \begin{align*}
        f_{b, k}:\qh_{b, k}(Q(n_a, n_b, n_c))&\rightarrow \qh(Q_{b, k}(n_b)) \times \qh(Q_b(k-1))\times \qh(Q(n_a, 0, n_c) \\
        [\leq]&\mapsto ([\leq_{Q_{b, k}(n_b)}], [\leq_{Q_b(k-1)}], [\leq_{Q(n_a, 0, n_c)}])\\
        f_{c, l}:\qh_{c, l}(Q(n_a, n_b, n_c))&\rightarrow  \qh(Q_{c, l}(n_c)) \times \qh(Q_c(l-1))\times \qh(Q(n_a, n_b, 0)\\
        [\leq]&\mapsto ([\leq_{Q_{c, l}(n_c)}], [\leq_{Q_c(l-1)}], [\leq_{Q(n_a, n_b, 0)}])\\
    \end{align*}
    with inverses given by 
     \begin{align*}
 g_{b, k}: \qh(Q_{b, k}(n_b)) \times \qh(Q_b(k-1))\times &\qh(Q(n_a, 0, n_c) \rightarrow  \qh_{b, k}(Q(n_a, n_b, n_c))\\
         ([\leq'], [\leq''], [\leq'''])&\mapsto [\iota_{Q_b(k-1), Q( n_a, 0, n_c), Q_{b, k}(n_c)}(\leq'', \leq''', \leq')]\\
 g_{c, l}: \qh(Q_{b, l}(n_b)) \times \qh(Q_b(l-1))\times& \qh(Q(n_a, 0, n_c) \rightarrow  \qh_{b, l}(Q(n_a, n_b, n_c))\\
         ([\leq'], [\leq''], [\leq'''])&\mapsto [\iota_{Q_c(l-1), Q( n_a, n_b, 0), Q_{c, l}(n_c)}(\leq'', \leq''', \leq')]\\
    \end{align*}
\end{proposition}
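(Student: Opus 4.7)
The plan is to verify, for each of the four claimed bijections, that the maps in both directions are well-defined on equivalence classes and that the composites are the identity. The maps $f_b, f_c, f_{b,k}, f_{c,l}$ are well-defined by Remark \ref{remark_partial_order_restriction}, which guarantees that the restriction of an adapted partial order is adapted and that equivalent partial orders restrict to equivalent partial orders, and the images clearly lie in the specified $\qh$-sets. For the inverse maps $g_b, g_c, g_{b,k}, g_{c,l}$ I appeal to Remark \ref{remark_adapted_restriction}. In the case of $g_b$, the only edge between $V_{Q_b(n_b)}$ and $V_{Q(n_a, 0, n_c)}$ is $a_0 \to b_1$, from $V_{Q(n_a, 0, n_c)}$ to $V_{Q_b(n_b)}$; since $\iota_{Q_b(n_b), Q(n_a, 0, n_c)}$ places $V_{Q_b(n_b)}$ below $V_{Q(n_a, 0, n_c)}$, there are no edges from $V_{Q_b(n_b)}$ to $V_{Q(n_a, 0, n_c)}$, and the remark yields adaptedness. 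For $g_{b,k}$ the only cross-edges among the three subquivers $Q_b(k-1), Q(n_a, 0, n_c), Q_{b,k}(n_b)$ are $a_0 \to b_1$ and $b_{k-1} \to b_k$ (both present only when $k \geq 2$), so no edges go from $V_{Q_b(k-1)}$ to $V_{Q(n_a, 0, n_c)}$, from $V_{Q_{b,k}(n_b)}$ to $V_{Q_b(k-1)}$, or from $V_{Q_{b,k}(n_b)}$ to $V_{Q(n_a, 0, n_c)}$, which is the first alternative of the remark. Symmetric reasoning handles $g_c$ and $g_{c,l}$, and each image lies in the relevant $\qh$-subset by construction.

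The identity $f \circ g = \id$ is automatic, since restricting an induced partial order to a component returns the original partial order on that component. The nontrivial identity is $g \circ f = \id$: for $\leq$ in $\qh_b$ (and analogously in the other three cases), I must show the reconstructed order $\iota = \iota_{Q_b(n_b), Q(n_a, 0, n_c)}(\leq_{Q_b(n_b)}, \leq_{Q(n_a, 0, n_c)})$ is equivalent to $\leq$. Since two adapted partial orders are equivalent if and only if they induce the same standard and costandard modules, I plan to check this vertex by vertex using the explicit description of projectives and injectives in $\field Q(n_a, n_b, n_c)$: the modules $P_{b_i}, P_{c_j}, I_{b_i}, I_{c_j}$ and $I_{a_k}$ are uniserial, while $P_{a_k}$ has a trunk $L_{a_k}, L_{a_{k-1}}, \ldots, L_{a_0}$ with branches in the $b$'s and $c$'s issuing from $L_{a_0}$.

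The main obstacle is the comparison of $\Delta_{a_k} = P_{a_k}/N$, where $N$ is the submodule generated by the simples $L_w$ with $w > a_k$; I need $N$ to agree under $\leq$ and $\iota$. The only potential discrepancy is that under $\leq$ a simple $L_{b_i}$ with $b_i > a_k$ might contribute to $N$, whereas under $\iota$ every $b_i$ satisfies $b_i <_\iota a_k$ and so contributes nothing. However, if $b_i > a_k$ in $\leq$, then combined with $b_i < a_0$ (from the $\qh_b$ condition) and transitivity, $a_0 > a_k$; hence $L_{a_0} \in N$, and the image of the embedding $P_{a_0} \hookrightarrow P_{a_k}$ already contains the entire $b$- and $c$-branches, in particular $L_{b_i}$, so $N$ is unchanged. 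The remaining modules are easier: the standards $\Delta_{b_i}, \Delta_{c_j}$ depend only on the corresponding restriction, and the costandards $\nabla_{b_i}$ are truncated at $L_{a_0}$ by the condition $a_0 > b_i$ (reducing them to the restriction on $\{b_l : l \leq i\}$), while $\nabla_{c_j}$ depends only on $\leq_{Q(n_a, 0, n_c)}$ and $\nabla_{a_k}$ only on the order among the $a_j$'s.

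For $\qh_{b,k}$ and $\qh_{c,l}$ the strategy is parallel, with the additional observation that the defining condition $b_k > a_j$ for all $j$ ensures $L_{b_k}$ is always a generator of $N$ when computing $\Delta_{a_l}$; the image of $P_{b_k} \hookrightarrow P_{a_l}$ contains all $L_{b_j}$ for $j \geq k$, so any discrepancies between $\leq$ and $\iota$ in the comparisons of $b_j$, $j \geq k$, with $a_l$ are absorbed. For $b_i$ with $i < k$, the argument from the $\qh_b$ case applies verbatim, and analogous reasoning handles the symmetric $c$-side statements.
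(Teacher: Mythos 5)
Your proposal is correct and its skeleton matches the paper's: well-definedness of all eight maps via Remarks \ref{remark_partial_order_restriction} and \ref{remark_adapted_restriction} (your edge-direction checks for the induced orders are exactly right), the composite restrict-after-induce being trivially the identity, and the real content being that induce-after-restrict yields an order equivalent to the original, verified module by module. Where you genuinely diverge is in which half of the equivalence carries the weight: the paper checks only costandard modules (using the listed fact that adapted orders with the same costandards have the same standards), and its delicate case is $\nabla_{b_i}$ for $i\geq k$, handled by a case split on whether the costandard of the restriction to $Q_{b,k}(n_b)$ is injective; you put the weight on standard modules, with $\Delta_{a_k}$ as the hard case, resolved by the absorption argument that $b_i>a_k$ together with $a_0>b_i$ forces $a_0>a_k$, so that the image of $P_{a_0}$ in $P_{a_k}$ already contains the whole $b$- and $c$-branches. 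The two computations are dual to each other and both valid. The one thin spot is the $\qh_{b,k}$ case: the sentence ``for $b_i$ with $i<k$ the argument applies verbatim'' leaves the vertices $b_j$ with $j\geq k$ unaddressed, and $\nabla_{b_j}$ for $j\geq k$ is exactly where the paper works hardest. It does go through by your own absorption mechanism --- any composition factor of $I_{b_j}$ lying above $b_j$ under $\leq$ but below it under the induced order is an $L_{a_m}$ or an $L_{b_l}$ with $l<k$, and then $b_k>a_m$ (resp.\ $b_k>a_0>b_l$) forces $b_k>b_j$ under both orders, so both costandards are cut off at $L_{b_k}$ and agree with the one computed from the restriction to $Q_{b,k}(n_b)$ --- but this should be said explicitly. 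Similarly, $\Delta_{b_i}$ for $i<k$ is not literally the $\qh_b$ computation, since $P_{b_i}$ has composition factors $L_{b_l}$ with $l\geq k$ lying in the top piece; again $b_k>a_0>b_i$ absorbs them.
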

\begin{proof}
   By Remark \ref{remark_partial_order_restriction} and Remark \ref{remark_adapted_restriction}, $f_b, f_c, f_{b, k}, f_{c, l}$ and $ g_b, g_c, g_{b, k}, g_{c, l}$ are well-defined. Moreover, it is clear that $g_{x, y}\circ f_{x, y}$ is the identity for $x\in \{b,c\}$ and $y\in \{1, \dots, n_b+1\}$ respectively $y\in \{1, \dots, n_c+1\}$. It remains to be shown that $f_{x, y}\circ g_{x, y}$  is the identity.
   We show this in the case $x=b$ and $1\leq y\leq n_b$, the case $y=n_b+1$ is similar, and the case $x=c$ follows by symmetry. Note that for the case $y=1$ we use the convention that $\qh(Q_b(0))$ contains the empty order.\\
   Let $[\leq]\in \qh_{b, k}(Q(n_a, n_b, n_c))$.
   We have to show that the adapted partial order $\hat{\leq}$, obtained by  restricting $\leq$ to three partial orders $\leq':=\leq_{|Q(n_a, 0, n_c)}$,  $\leq'':=\leq_{|Q_b(k-1)}$ and   $\leq''':=\leq_{|Q_{b, k}(n_b)}$ and then applying $\iota_{Q_{b, k}(n_b), Q(n_c, 0, n_c), Q_b(k-1)}$, is equivalent to $\leq$, that is, we have to show that $\leq$ and $\hat{\leq}$ give rise to the same costandard modules.\\
   For any vertex $$x\in \{a_i|0\leq i\leq n_a\}\cup \{b_i|1\leq i\leq n_b\}\cup \{c_i|1\leq i\leq n_c\}$$ of $Q(n_a, n_b, n_c)$, denote by $\nabla_{x}^{Q(n_a, n_b, n_c)}$ the costandard module for $Q(n_a, n_b, n_c)$ at $x$ with respect to $\leq$ and by  $\hat{\nabla}_{x}^{Q(n_a, n_b, n_c)}$ the costandard module for $Q(n_a, n_b, n_c)$ at $x$ with respect to $\hat{\leq}$.\\
   Similarly, for any $x\in \{a_i|0\leq i\leq n_a\}\cup \{c_i|1\leq i\leq n_c\}$ denote by $\nabla_{x}'$ the standard module for $Q(n_a, 0, n_c)$ at $x$ with respect to $\leq'$, for any $x\in \{b_i|1\leq  i\leq k-1\}$ denote by $\nabla_x''$ the standard module for $Q_b(k-1)$ at $x$ with respect to $\leq''$ and for any $x\in \{b_i|k\leq i\leq n_b\}$ denote by $\nabla_x'''$ the standard module for $Q_{b, k}(n_b)$ at $x$ with respect to $\leq'''$.\\
   Moreover, let $e':=\sum_{i=1}^{n_b}e_{b_i}$, $e'':=\sum_{i=1}^{n_a}e_{a_i}+\sum_{i=1}^{n_c}e_{c_i}+\sum_{i=k}^{n_b}e_{b_i}$ and \\$e''':=\sum_{i=1}^{n_a}e_{a_i}+\sum_{i=1}^{n_c}e_{c_i}+\sum_{i=1}^{k-1}e_{b_i}$ and let
   \begin{align*}
       &\pi': \field Q(n_a, n_b, n_c)\rightarrow \field Q(n_a, 0, n_c)\cong  \field Q(n_a, n_b, n_c)/ (\field Q(n_a, n_b, n_c)e' \field Q(n_a, n_b, n_c)),\\
       &\pi'': \field Q(n_a, n_b, n_c)\rightarrow \field Q_b(k-1)\cong  \field Q(n_a, n_b, n_c)/ (\field Q(n_a, n_b, n_c)e'' \field Q(n_a, n_b, n_c)),
   \end{align*}
   and
    \begin{align*}
       \pi''': \field Q(n_a, n_b, n_c)\rightarrow \field Q_{b, k}(n_b)\cong  \field Q(n_a, n_b, n_c)/ (\field Q(n_a, n_b, n_c)e''' \field Q(n_a, n_b, n_c)),
   \end{align*}
   be the canonical projections.\\
   Recall that $\leq'$, $\leq''$ and $\leq'''$ are the restrictions of both $\leq$ and $\hat{\leq}$ to  $Q(n_a, 0, n_c)$, respectively  $Q_b(k-1)$, respectively  $Q_{b, k}(n_b)$.\\
    Let $x\in \{a_i|0\leq i\leq n_a\}\cup \{c_i|1\leq i\leq n_c\}$. Then, since
    there are no arrows in $Q(n_a, n_b, n_c)$ from any vertex in the vertex sets of $Q_b(k-1)$ or $Q_{b, k}(n_b)$ to any vertex in the vertex set of $Q(n_a, 0, n_c)$, the injective at $x$ for  $Q(n_a, n_b , n_c)$ coincides with the restriction along $\pi'$ of the injective at $x$ for $Q(n_a, 0, n_c)$. Since $\leq'$ is the restriction of both $\leq$ and $\hat{\leq}$ to  $Q(n_a, 0, n_c)$, this implies that 
    \begin{align*}
        \nabla_x\cong \Res_{\pi'}\nabla_x'\cong \hat{\nabla}_x.
    \end{align*}
    Now let $x=b_i$ for $1\leq i<k$.
    Then by definition of  $\iota_{Q_{b, k}(n_b), Q(n_c, 0, n_c), Q_b(k-1)}$, $b_i\hat{\leq}a_j$ for all $0\leq j\leq n_a$ and $b_i\hat{<}b_j$ for all $k\leq j\leq n_b$. Hence 
    \begin{align*}
      \hat{\nabla}_x\cong \Res_{\pi''}\nabla_x''.
    \end{align*}
    On the other hand, $x=b_i<a_0$ by definition of $\qh_{b, k}(Q(n_a, n_b, n_c)$. Since any path from any vertex in $\{a_j|0\leq j\leq n_a\}\cup \{c_j|1\leq j\leq n_c\}\cup\{b_j|k\leq j\leq n_b\}$ to $x=b_i$ must pass through $a_0$, we therefore also have 
    \begin{align*}
        \nabla_x\cong \Res_{\pi''}\nabla_x''.
    \end{align*}
   Finally, let  $x=b_i$ for $k\leq i\leq n_b$. Suppose first that $\nabla'''_x$ is not an injective $\field Q_{b, k}(n_b)$-module. Then, there is some $k\leq j< i$ such that $b_j>'''b_i$. Since $\leq'''$ is the restriction of both $\leq$ and $\hat{\leq}$ to  $Q_{b, k}(n_b)$, this implies that $b_j>b_i$ and $b_j\hat{>}b_i$, and since any path from any vertex in $\{a_l|0\leq l\leq n_a\}\cup \{c_l|1\leq l\leq n_c\}\cup\{b_l|1\leq j<k\}$ to $x=b_i$ must pass through $b_j$, we have
     \begin{align*}
        \nabla_x\cong \Res_{\pi'''}\nabla_x'''\cong \hat{\nabla}_x.
    \end{align*}
    Suppose on the other hand that $\nabla'''_x$ is an injective $\field Q_{b, k}(n_b)$-module. Then $x=b_i\geq''' b_j$ for all $k\leq j\leq i$, so that $x\geq b_j$ for all $k\leq j\leq i$. In particular, $x\geq b_k$ and $b_k>a_j$ for every $1\leq j\leq n_a$. Moreover, $b_k>a_0>b_j$ for all $1\leq j<k$. Since the vertex set $\{a_j|0\leq j\leq n_a\} \cup\{b_j|1\leq j\leq i\}$ accounts for all composition factors of the injective $\field Q(n_a, n_b, n_c)$-module at $x$, this implies that $\nabla_x$ is injective.\\ 
    Similarly, by definition of $\hat{\leq}$, $y\hat{<}b_i$ for any vertex $y\in \{a_l|0\leq l\leq n_a\}\cup \{c_l|1\leq l\leq n_c\}\cup\{b_l|1\leq j<k\}$, and since $x=b_i\geq''' b_j$ for all $k\leq j\leq i$, $x\hat{\geq} b_j$ for all $k\leq j\leq i$, so that $\hat{\nabla}_x$ is also injective. 
\end{proof}
\begin{remark}\label{last_remark}
    Note that using the convention that the sets $\qh(Q_{b, n_b+1}(n_b))$ and $\qh(Q_{c, n_c+1}(n_c))$ contain the empty order, the statements for $f_b$ and $g_b$ and $f_c$ and $g_c$ become the case $k=n_b+1$ respectively $l=n_c+1$ in the corresponding statement for $f_{b, k}$ and $g_{b, k}$, respectively $f_{c, l}$ and $g_{c, l}$.
\end{remark}
\begin{theorem}\label{thm_counting}
    The quasi-hereditary structures on $\field Q(n_a, n_b, n_c)$ admitting a regular exact Borel subalgebra are counted by 
    \begin{align*}
        C_{n_b+1} C_{n_a+n_c+1}+C_{n_c+1} C_{n_a+n_b+1}-C_{n_a+1}C_{n_b+1}C_{n_c+1}
    \end{align*}
    The quasi-hereditary structures on $\field Q(n_a, n_b, n_c)^{\op}$ admitting a regular exact Borel subalgebra are counted by 
    \begin{align*}
        C_{n_b} C_{n_a+n_c+1}+C_{n_c} C_{n_a+n_b+1}-C_{n_b} C_{n_c} C_{n_a+1}.
    \end{align*}
\end{theorem}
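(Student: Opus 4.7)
The strategy is to translate the regularity criterion from Corollary \ref{corollary_special_quiver_regular} into a partition of the quasi-hereditary structures admitting a regular exact Borel into the sets $\qh_{b,k}$ and $\qh_{c,l}$, apply Proposition \ref{proposition_bijections} together with the fact that the number of quasi-hereditary structures on any type $\textup{A}_n$ quiver equals the Catalan number $C_n$ (as recalled from \cite{combinatorics}), and then finish with the Catalan convolution identity $\sum_{j=0}^{n}C_jC_{n-j}=C_{n+1}$ and inclusion–exclusion.

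First, I would set
$A:=\bigcup_{k=1}^{n_b+1}\qh_{b,k}(Q(n_a,n_b,n_c))$ and
$B:=\bigcup_{l=1}^{n_c+1}\qh_{c,l}(Q(n_a,n_b,n_c))$,
and observe that Corollary \ref{corollary_special_quiver_regular} identifies $\qh^B(Q(n_a,n_b,n_c))=A\cup B$, where we adopt the convention from Remark \ref{last_remark} that $\qh_{b,n_b+1}=\qh_b$ and $\qh_{c,n_c+1}=\qh_c$. A short check shows that for distinct $k,k'\in\{1,\dots,n_b+1\}$ the defining inequalities of $\qh_{b,k}$ and $\qh_{b,k'}$ are incompatible (any $k<k'$ forces both $b_k>a_0$ and $a_0>b_k$), so $A$ is a disjoint union; likewise for $B$.

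Next, I invoke Proposition \ref{proposition_bijections} together with $|\qh(Q_{b,k}(n_b))|=C_{n_b-k+1}$, $|\qh(Q_b(k-1))|=C_{k-1}$ and $|\qh(Q(n_a,0,n_c))|=C_{n_a+n_c+1}$ to obtain
\begin{equation*}
|\qh_{b,k}(Q(n_a,n_b,n_c))|=C_{k-1}\,C_{n_b-k+1}\,C_{n_a+n_c+1},\quad 1\le k\le n_b+1,
\end{equation*}
and summing gives $|A|=C_{n_a+n_c+1}\sum_{j=0}^{n_b}C_jC_{n_b-j}=C_{n_a+n_c+1}\,C_{n_b+1}$ by Catalan convolution; symmetrically $|B|=C_{n_a+n_b+1}\,C_{n_c+1}$. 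For $|A\cap B|$ I would iterate Proposition \ref{proposition_bijections}: applying it first along the $b$-side to an element of $\qh_{b,k}\cap\qh_{c,l}$, the restriction to $Q(n_a,0,n_c)$ lies in $\qh_{c,l}(Q(n_a,0,n_c))$, and a second application of the proposition to this smaller quiver decomposes it as $\qh(Q_{c,l}(n_c))\times\qh(Q_c(l-1))\times\qh(Q(n_a,0,0))$. Hence
\begin{equation*}
|\qh_{b,k}\cap\qh_{c,l}|=C_{k-1}\,C_{n_b-k+1}\,C_{l-1}\,C_{n_c-l+1}\,C_{n_a+1},
\end{equation*}
and double-summing and applying Catalan convolution twice yields $|A\cap B|=C_{n_b+1}\,C_{n_c+1}\,C_{n_a+1}$. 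Inclusion–exclusion then gives the first formula. The main (minor) obstacle is checking that the iterated application of Proposition \ref{proposition_bijections} really is well-defined for $Q(n_a,0,n_c)$ with the additional constraints, which amounts to verifying that the relevant restricted partial orders remain adapted; this follows from Remarks \ref{remark_partial_order_restriction} and \ref{remark_adapted_restriction}.

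For $Q(n_a,n_b,n_c)^{\op}$, the criterion from Corollary \ref{corollary_special_quiver_regular} is simpler: the regularity condition is that $a_0$ dominates either all $b_i$ or all $c_j$. Setting $A^{\op}$ and $B^{\op}$ to be the structures realising the two options, the op-analogue of Proposition \ref{proposition_bijections} (which follows by precisely the same restriction/induction argument, since the hypothesis that $a_0$ exceeds every $b_i$ lets the partial order be freely chosen on $Q_b(n_b)^{\op}$ and on $Q(n_a,0,n_c)^{\op}$) gives $|A^{\op}|=C_{n_b}\,C_{n_a+n_c+1}$ and $|B^{\op}|=C_{n_c}\,C_{n_a+n_b+1}$, while an iteration yields $|A^{\op}\cap B^{\op}|=C_{n_b}\,C_{n_c}\,C_{n_a+1}$. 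Inclusion–exclusion then supplies the second formula.
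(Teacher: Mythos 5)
Your proposal is correct and follows essentially the same route as the paper: the same partition of $\qh^B$ into the pairwise disjoint sets $\qh_{b,k}$ and $\qh_{c,l}$ (with the $k=n_b+1$, $l=n_c+1$ conventions), the same application of Proposition \ref{proposition_bijections} and the Catalan count $C_m$ for directed type $\textup{A}_m$, the same iterated bijection for the intersections, and the same Catalan convolution plus inclusion--exclusion. Your explicit checks of disjointness and of the op-analogue of Proposition \ref{proposition_bijections} are slightly more careful than the paper's, but the argument is the same.
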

\begin{proof}
    By Corollary \ref{corollary_special_quiver_regular}, the set $\qh^B(Q(n_a, n_b, n_c))$ is the (in general not disjoint) union of the sets $\qh_{b, k}(Q(n_a, n_b, n_c))$ for $1\leq k\leq  n_b+1$ and $\qh_{c, l}(Q(n_a, n_b, n_c))$ for $0\leq l\leq n_c+1$. Note that by definition of $\qh_{b, k}(Q(n_a, n_b, n_c))$, the sets $\qh_{b, k}(Q(n_a, n_b, n_c))$, $1\leq k\leq n_b+1$  are pairwise disjoint, and analogously, the sets  $\qh_{c, l}(Q(n_a, n_b, n_c))$, $0\leq l\leq n_c+1$ are pairwise disjoint.
    Hence
    \begin{align*}
        |\qh^B(Q(n_a, n_b, n_c))|=&\sum_{k=0}^{n_b+1}|\qh_{b, k}(Q(n_a, n_b, n_c))|+\sum_{l=0}^{n_c+1}|\qh_{c, l}(Q(n_a, n_b, n_c))|\\
        &- \sum_{k=0}^{n_b+1}\sum_{l=0}^{n_c+1}|\qh_{b, k}(Q(n_a, n_b, n_c))\cap \qh_{c, l}(Q(n_a, n_b, n_c)|
    \end{align*}
    Moreover, by Proposition \ref{proposition_bijections} and Remark \ref{last_remark}, we have for all $1\leq k\leq n_b+1$ 
    \begin{align*}
        |\qh_{b, k}(Q(n_a, n_b, n_c))|= |\qh(Q_{b, k}(n_b))| \cdot|\qh(Q_b(k-1))|\cdot | \qh(Q(n_a, 0, n_c)|.
    \end{align*}
    Since $Q_{b, k}(n_b)$, $Q_b(k-1)$ and $Q(n_a, 0, n_c)$ are quivers of directed type $\textup{A}_n$, we can use  \cite[Corollary 4.8]{combinatorics},  to conclude that
    \begin{align*}
       |\qh_{b, k}(Q(n_a, n_b, n_c))|=C_{n_b-k+1}C_{k-1}C_{n_a+n_c+1},  
    \end{align*}
    where, for the case $k=1$ and $k=n_b+1$ we use that by our convention there is exactly one quasi-hereditary structure on the quiver of type $\textup{A}_0$.
    Similarly
     \begin{align*}
       |\qh_{c, l}(Q(n_a, n_b, n_c))|=C_{n_c-l+1}C_{l-1}C_{n_a+n_b+1} 
    \end{align*}
    for all $1\leq l\leq n_c+1$
    Moreover, using the bijection in Proposition \ref{proposition_bijections}, we see that given $1\leq k\leq n_b+1$ and $1\leq l\leq n_c+1$, 
    the intersection $$\qh_{b, k}(Q(n_a, n_b, n_c))\cap \qh_{c, l}(Q(n_a, n_b, n_c))$$ is in bijection with $$\qh(Q_{b, k}(n_b))\times\qh(Q_b(k-1))\times\qh_{c, l}(Q(n_a, 0, n_c)),$$ which in turn is in bijection with  $$\qh(Q_{b, k}(n_b))\times\qh(Q_b(k-1))\times \qh(Q_{c, l}(n_c))\times \qh(Q_c(l-1))\times \qh(Q(n_a, 0, 0)).$$ 
    Hence, by \cite[Corollary 4.8]{combinatorics},
    \begin{align*}
     &|\qh_{b, k}(Q(n_a, n_b, n_c))\cap \qh_{c, l}(Q(n_a, n_b, n_c))|\\
     =& |\qh(Q_{b, k}(n_b))|\cdot|\qh(Q_b(k-1)|\cdot | \qh(Q_{c, l}(n_c)|\cdot| \qh(Q_c(l-1))|\cdot| \qh(Q(n_a, 0, 0)|\\
     =& C_{n_b-k+1} C_{k-1} C_{n_c-l+1} C_{l-1} C_{n_a+1}.
     \end{align*}
     Thus, 
     \begin{align*}
         |\qh^B(Q(n_a, n_b, n_c)|
         =&\sum_{k=1}^{n_b+1}C_{n_b-k+1}C_{k-1}C_{n_a+n_c+1}\\&+\sum_{l=1}^{n_c+1}C_{n_c-l+1}C_{l-1}C_{n_a+n_b+1}\\ &-\sum_{k=1}^{n_b+1}\sum_{l=1}^{n_c+1}C_{n_b-k+1}C_{k-1}C_{n_c-l+1}C_{l-1}C_{n_a+1}\\
         =&C_{n_b+1}C_{n_a+n_c+1}+C_{n_c+1}C_{n_a+n_b+1}-C_{n_a+1}C_{n_b+1}C_{n_c+1},
     \end{align*}
     where the last equality follows from the recursive formula for Catalan numbers.\\
    Similarly, by \ref{corollary_special_quiver_regular}, the set $\qh^B(Q(n_a, n_b, n_c)^{\op})$ is the (in general not disjoint) union of the sets $\qh_{b}(Q(n_a, n_b, n_c))$ and $\qh_{c}(Q(n_a, n_b, n_c))$.
    Hence
    \begin{align*}
        |\qh^B(Q(n_a, n_b, n_c)^{\op})|=&|\qh_{b}(Q(n_a, n_b, n_c))|+|\qh_{c}(Q(n_a, n_b, n_c))|\\ &- |\qh_{b}(Q(n_a, n_b, n_c))\cap \qh_{c}(Q(n_a, n_b, n_c)|.
    \end{align*}
    Moreover, by Proposition \ref{proposition_bijections}, we have
    \begin{align*}
        |\qh_{b}(Q(n_a, n_b, n_c))|= |\qh(Q_{b}(n_b))| \cdot| \qh(Q(n_a, 0, n_c)|=C_{n_b}C_{n_a+n_c+1}
    \end{align*}
    and
     \begin{align*}
       |\qh_{c}(Q(n_a, n_b, n_c))|=C_{n_c}C_{n_a+n_b+1} .
    \end{align*}
    Moreover, as before, we have that the intersection
     $$\qh_{b}(Q(n_a, n_b, n_c))\cap \qh_{c}(Q(n_a, n_b, n_c))$$ is in bijection with $$\qh(Q_{b}(n_b))\times \qh_{c}(Q(n_a, 0, n_c)),$$ which in turn is in bijection with  $$\qh(Q_{b}(n_b))\times\qh(Q_{c}(n_c))\times \qh(Q(n_a, 0, 0)).$$ 
    Hence, by \cite[Corollary 4.8]{combinatorics},
    \begin{align*}
     &|\qh_{b}(Q(n_a, n_b, n_c))\cap \qh_{c}(Q(n_a, n_b, n_c))|\\
     =& |\qh(Q_{b}(n_b))|\cdot | \qh(Q_{c}(n_c)|\cdot| \qh(Q(n_a, 0, 0)|\\
     =& C_{n_b} C_{n_c} C_{n_a+1}.
     \end{align*}
     Thus, 
     \begin{equation*}
         |\qh^B(Q(n_a, n_b, n_c)|^{\op}
         =C_{n_b}C_{n_a+n_c+1}-C_{n_c}C_{n_a+n_b+1}-C_{n_b}C_{n_c}C_{n_a+1}.\qedhere
     \end{equation*}
\end{proof}
For type $\textup{D}$, we thus obtain the following table, taking the second to last column from \cite[p. 22-23]{combinatorics}:
\begin{center}\label{tableD}
\begin{tabular}{c|c| c| c }
    $Q$ & $|\qh^B(Q)|$  &  $|\qh(Q)|=|\qh(Q^{\op})|$ & $\lim_{n\rightarrow\infty} \frac{|\qh^B(Q)|}{|\qh(Q)|}$ \\
    \hline
    $Q(n, 1, 1)$ & $4(C_{n+2}-C_{n+1})$ & \multirow{2}{*}{$2C_{n+3}-3C_{n+2}$} &  3/5 \\
    $Q(n, 1, 1)^{\op}$ & $2C_{n+2}-C_{n+1}$ &  & 7/20  \\
    \hline
    $Q(1, n, 1)$ & $2C_{n+2}+C_{n+1}$ & \multirow{2}{*}{$3C_{n+2}-C_{n+1}$} & 9/11 \\
    $Q(1, n, 1)^{\op}$ & $C_{n+2}+3C_n$ &  & 19/44
\end{tabular}
    
\end{center}

In the following table, we have calculated some numbers for small $n$:
\begin{center}
\begin{tabular}{c|c| c| c}
    $Q$ & $|\qh^B(Q)|$ &  $|\qh^B(Q^{\op})|$ &  $|\qh(Q)|=|\qh(Q^{\op})|$\\
    \hline
    $Q(1, 1, 1)$ & 12 & 8 &  13 \\
    $Q(2, 1, 1)$ & 36 & 23 & 42 \\
    $Q(3, 1, 1)$ & 112 & 70 & 138 \\
    \hline
    $Q(1, 1, 1)$ & 12 & 8 &  13 \\
    $Q(1, 2, 1)$ & 33 & 20 & 37\\
    $Q(1, 3, 1)$ & 98 & 57 & 112\\
\end{tabular}
    
\end{center}
As mentioned in \cite[p.23]{combinatorics}, the sequence $(2C_{n+3}-3C_{n+2})_n$ is listed in the OEIS as \href{https://oeis.org/A070031}{A070031(n+1)}. Moreover, the sequences $(2C_{n+2}-C_{n+1})_n$ and $(2C_{n+2}+C_{n+1})_n$ can be found in the OEIS as \href{https://oeis.org/A000782}{A000782(n+2)} and \href{https://oeis.org/A376161}{A376161(n+1)} respectively. At the point of publication, the remaining sequences are not listed in the OEIS.\\
In a similar way to type $\textup{D}$, we can also count the quasi-hereditary structures which admit regular exact Borel subalgebras on type $\textup{E}$; again, we have taken the last column directly from \cite[p.23]{combinatorics}.
\begin{center}
\begin{tabular}{c | c|c|c|c}
   Type & $Q$ & $|\qh^B(Q)|$ &  $|\qh^B(Q^{\op})|$ &  $|\qh(Q)|=|\qh(Q^{\op})|$\\
   \hline
   \multirow{2}{*} {$\textup{E}_6$} &  $Q(1,2,2)$   & 90 & 48 & 106\\
    & $Q(2,2,1)$  & 104 & 60 & 130 \\
    \hline
   \multirow{3}{*} {$\textup{E}_7$} & $Q(1,3,2)$  & 266 & 134 & 322\\
   & $Q(2,3,1)$  & 320 & 177 & 416\\
   & $Q(3,2,1)$  & 334 & 188 & 453 \\
   \hline
   \multirow{3}{*} {$\textup{E}_8$} & $Q(1,4,2)$  & 828 & 404 & 1020\\
    & $Q(2,4,1)$  & 1026 & 555 & 1368\\
    & $Q(4,2,1)$  & 1098 & 609 & 1584\\
\end{tabular}
\end{center}

\section{Acknowledgements}

I would like to thank Teresa Conde and Steffen König for discussions resulting in Theorem \ref{thm_reedy}, as well as Julian Külshammer for many other helpful discussions. Moreover, I would like to thank Georgios Dalezios, Teresa Conde, Steffen König and Julian Külshammer for sharing their results and explaining them to me.

\bibliography{monomial}
\end{document}